\DeclareMathOperator{\wilfoper}{W} 
\DeclareMathOperator{\eliahouoper}{E} 
\DeclareMathOperator{\Frobeniusoper}{F} 
\DeclareMathOperator{\multiplicityoper}{m} 
\DeclareMathOperator{\conductoroper}{\chi} 
\DeclareMathOperator{\genusoper}{\Omega} 
\DeclareMathOperator{\qoper}{q} 
\DeclareMathOperator{\leftsoper}{L} 
\DeclareMathOperator{\primitivesoper}{P} 
\DeclareMathOperator{\embeddingdimensionoper}{d} 
\DeclareMathOperator{\ratiooper}{r} 
\DeclareMathOperator{\typeoper}{t} 
\DeclareMathOperator{\Aperyoper}{Ap} 
\newtheorem{theorem}{Theorem}[section]
\newtheorem{corollary}[theorem]{Corollary}
\newtheorem{proposition}[theorem]{Proposition}
\newtheorem{conjecture}[theorem]{Conjecture}
\newtheorem{fact}[theorem]{Fact}
\newtheorem{example}[theorem]{Example}
\newtheorem{remark}[theorem]{Remark}
\newtheorem{question}[theorem]{Question}
\newtheorem{problem}[theorem]{Problem}
\newtheorem{claim}{Claim}
\newtheorem{code}[theorem]{GAP-code}
\title{Conjecture of Wilf: a survey}
\author{Manuel Delgado}
\date{\today}
\date{\today}
\address{CMUP, Departamento de Matem\'atica, Faculdade de
	Ci\^encias, Universidade do Porto, Rua do Campo Alegre 687, 4169-007 Porto,
	Portugal} 
\email{mdelgado@fc.up.pt} 
\thanks{The author was partially supported by CMUP (UID/MAT/00144/2019), which is funded by FCT (Portugal) with national (MCTES) and European structural funds (FEDER), under the partnership agreement PT2020, and also by the Spanish project MTM2014-55367-P. Furthermore, the author acknowledges a sabbatical from the FCT: SFRH/BSAB/142918/2018.}
\begin{document}
\keywords{Numerical semigroup, Wilf's conjecture}

\subjclass[2010]{20M14, 20--02, 05--02, 11--02}


\maketitle

\begin{abstract}
This paper intends to survey the vast literature devoted to a problem posed by Wilf in 1978 which, despite the attention it attracted, remains unsolved. As it frequently happens with combinatorial problems, many researchers who got involved in the search for a solution thought at some point that a solution would be just around the corner, but in the present case that corner has never been reached.

By writing this paper I intend to give the reader a broad approach on the problem and, when possible, connections between the various available results. With the hope of gathering some more information than just using set inclusion, at the end of the paper a slightly different way of comparing results is developed.

\end{abstract}

\section{Introduction}\label{sec:introduction}
At the beginning, my personal motivation was to build a list of references, each with a summary of the results therein related to Wilf's conjecture. This would have helped me by not having to dive into a collection of papers each time I needed a result. Then I thought that making the list public could also be a contribution to Wilf's conjecture. This process ended up in the writing of this paper, which is in some sense \emph{yet another survey}. Another, because most papers fully dedicated to the conjecture provide good literature reviews. Although not aiming to be complete, these could be taken as surveys.

As it frequently happens with easy to state combinatorial problems, while working on them one thinks that a solution is at reach. This is certainly the case of the problem posed by Wilf, but nevertheless no one has found the aimed solution so far. Tacking into account the number of published papers on the theme, one can infer that much time has globally been dedicated to the problem. This may lead people to classify the problem in the category of dangerous problems, in the sense that one risks to spend too much time struggling with it and have to give up without getting a solution. Fortunately, partial results may be of some interest.

The plan of the paper follows.

This introductory section contains most of the terminology and notation to be used along the paper. 
There are not many differences to what is commonly used. 
This section contains also what I consider a convenient way to visualize numerical semigroups. Although almost all further images appear only in the last section, we provided sufficient information to produce images of semigroups appearing in the remaining parts of the text.

Some problems posed by Wilf are described in the second section, which can be seen as a kind of motivation for the paper.

The third section is the real survey. It contains a large introductory part and then the statements of results grouped into several subsections.

In the final section we introduce a notion of \emph{quasi generalization} (roughly speaking, a set quasi generalizes another if it contains all its elements, except possibly a finite number of them). It allows to draw a lattice involving some important properties that give rise to semigroups satisfying Wilf's conjecture. 

\subsection{Terminology and notation}\label{sec:notation}
Most of the notation and terminology used appears in the book by Rosales and García-Sánchez~\cite{RosalesGarcia2009Book-Numerical}. Results referred as well known can be found in the same reference.

Let~$S$ be a numerical semigroup. 
Recall that a numerical semigroup $S$ is a subset of $\mathbb{N}$ (the set of nonnegative integers) such that $0 \in S$, $S$ is closed under addition and the complement $\mathbb{N} \setminus S$ is finite (possibly empty).
Throughout the paper, when the letter $S$ appears and nothing else is said, it should be understood as being a numerical semigroup. 

The \emph{minimal generators} of~$S$ are also known as \emph{primitive elements} of~$S$.
The set of primitive elements of~$S$ is denoted $\primitivesoper(S)$. It is well known to be finite. When there is no possible confusion on which is the semigroup in hand, the notation is often simplified and we write  $\primitivesoper$ instead of $\primitivesoper(S)$. This kind of simplification in the notation is made for all the other combinatorial invariants introduced along the paper.

The \emph{multiplicity} of~$S$ is the least positive integer of~$S$ and is denoted $\multiplicityoper(S)$, or simply~$\multiplicityoper$.
The \emph{Frobenius number} of~$S$ is the largest integer that does not belong to $S$, and is denoted $\Frobeniusoper(S)$.
The \emph{conductor} of~$S$ is simply $\Frobeniusoper(S) +1$. Wilf's notation will be used for the conductor: $\conductoroper(S)$, or simply~$\conductoroper$. Note that $\conductoroper(S)$ is the smallest integer in~$S$ from which all the larger integers belong to~$S$. Let $\qoper(S)=\lceil \conductoroper(S)/\multiplicityoper(S)\rceil$ be the smallest integer greater than or equal to $\conductoroper(S)/\multiplicityoper(S)$. This number is called the \emph{depth} of~$S$ and is frequently denoted just by~$\qoper$. It is worth to keep in mind that $\conductoroper(S)\le \multiplicityoper(S)\qoper(S)$.

The set of \emph{left elements} of~$S$ consists of the elements of~$S$ that are smaller than $\conductoroper(S)$. It is denoted $\leftsoper(S)$ (or simply~$\leftsoper$).
A positive integer that does not belong to $S$ is said to be a \emph{gap} of $S$ (\emph{omitting value} in Wilf's terminology). The cardinality of the set of gaps is said to be the \emph{genus} of $S$ and, following Wilf, is denoted by $\genusoper(S)$, or simply by $\genusoper$. 

If $x\in S$, then $\Frobeniusoper(S)-x\not \in S$. Thus, the following well known remark holds.

\begin{remark}
Let $S$ be a numerical semigroup. Then $\genusoper(S)\ge \conductoroper(S)/2$. 
\end{remark}

As usual, $\lvert X\rvert$ denotes the cardinality of a set $X$.
It is immediate that $\genusoper(S)+\left|\leftsoper(S)\right| =\conductoroper(S)$.
From the above remark it follows that $\conductoroper(S)\ge 2\left|\leftsoper(S)\right|$.

The number of primitives of $S$ is  called the \emph{embedding dimension} of~$S$. As it is just the cardinality of $\primitivesoper(S)$, it can be denoted $\lvert \primitivesoper(S)\rvert $, but in this paper I will mainly use the notation $\embeddingdimensionoper(S)$, or simply~$\embeddingdimensionoper$; $\embeddingdimensionoper$ stands for \emph{dimension} (a short for embedding dimension). 

An integer $x$ is said to be a \emph{pseudo-Frobenius number} of $S$ if $x\not\in S$ and $x+s\in S$, for all $s\in S\setminus{\{0\}}$. The cardinality of the set of pseudo-Frobenius numbers of $S$ is said to be the \emph{type} of $S$ and is denoted by $\typeoper(S)$. The notion of type has been an important ingredient in the discovery of various families of numerical semigroups satisfying Wilf's conjecture, due to Proposition~\ref{prop:type} below.
Another important tool, which is used in a crucial (and frequently rather technical) way in the proofs of some results presented in this survey is the \emph{Apéry set (with respect to the multiplicity)}: $\Aperyoper(S,\multiplicityoper)=\{s\in S\mid s-\multiplicityoper\not\in S\}$.

Let $X$ be a set of positive integers. The notation $\langle X\rangle_{t}$ is used to represent the smallest numerical semigroup that contains $X$ and all the integers greater than or equal to~$t$. 

For a numerical semigroup $S$, the interval of integers starting in $\conductoroper(S)$ and having $\multiplicityoper(S)$ elements is called the \emph{threshold interval} of $S$ (following a suggestion of Eliahou).

\subsection{A convenient way to visualize numerical semigroups}\label{subsec:visualization-and-figures}
The pictures in this paper were produced using the \textsf{GAP}~\cite{GAP4-2018} package \textsf{IntPic}~\cite{IntPic-2017}, while the computations have been carried out using the \textsf{GAP} package \textsf{numericalsgps}~\cite{Numericalsgps-2018}.

Let $S$ be a numerical semigroup. The set of nonnegative integers up to $\conductoroper+\multiplicityoper-1$ clearly contains $\leftsoper$ and it is easy to see that it contains $\primitivesoper$ as well. It is helpful to dispose the mentioned integers into a table and to highlight those that, in some sense, are special.

Several figures will be presented to give pictorial views of numerical semigroups. Each of them consists of a rectangular $(\qoper+1)\times \multiplicityoper$-table and the entries corresponding to elements of the semigroup are highlighted in some way. Some gaps can also be emphasized. The entries in uppermost row are those of the threshold interval. 

\begin{example}\label{ex:gens-5-13_cond-20}
	Figure~\ref{fig:gens-5-13_cond-20} is a pictorial representation of the numerical semigroup $\langle 5,13 \rangle_{20}=\langle 5,13,21,22,24 \rangle$. 
	The elements of the semigroup are highlighted and, among them, the primitive elements and the conductor are emphasized. When an element is highlighted for more than one reason, gradient colours are used. 
	
\thisfloatsetup{floatwidth=.35\hsize,capbesidewidth=sidefil,capposition=beside,capbesideposition=right}
	\begin{figure}[h]
		\begin{center}
			\includegraphics[scale=0.8]{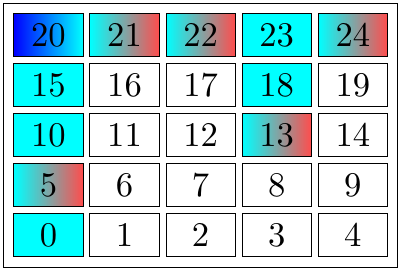}
		\end{center}
		\caption{Pictorial representation of the numerical semigroup $\langle 5,13,21,22,24 \rangle$. \label{fig:gens-5-13_cond-20}}
	\end{figure} 
\end{example}

Observe that there is at most one primitive per column. There is exactly one in each column when the semigroup is  of maximal embedding dimension. 

Note that all the integers in a given column are congruent modulo $\multiplicityoper$. In particular, an element belongs to the Apéry set relative to $\multiplicityoper$ if and only if it is the lowest emphasized element 
in some column (provided that no gaps (for instance the pseudo-Frobenius numbers) are highlighted). 
\medskip

For the benefit of the reader, I explain the way I produced Figure~\ref{fig:gens-5-13_cond-20}, including the \textsf{GAP} code used. 
To start, \textsf{GAP} is taught what my numerical semigroup is (see the manual of \textsf{numericalsgps} for details).
\begin{verbatim}
	ns := NumericalSemigroup(5,13,21,22,24);
\end{verbatim}
Then one can use the following commands to produce the \textsf{TikZ} code for the picture shown (which can be included in a \LaTeX\ document):
\begin{code}\label{code:draw}
\begin{verbatim}
	#cls is given just to make a change to the default colors
	cls := [ "blue","-red","red!70", "black!40" ];
	P := MinimalGenerators(ns);
	m := Multiplicity(ns);
	c := Conductor(ns);
	q := CeilingOfRational(c/m);
	rho := q*m-c;
	list := [-rho .. c+m-1];
	ti := [c..c+m-1]; 
	importants := Union(SmallElements(ns),ti);
	options := rec(colors := cls,highlights:=[[c],importants,P]);
	tkz := IP_TikzArrayOfIntegers(list,m,options);;	
	Print(tkz);
\end{verbatim}
\end{code}
The function \texttt{IP\_TikzArrayOfIntegers} (which produces the \textsf{TikZ} code from the information previously computed using \textsf{numericalsgps}) is part of the \textsf{intpic} package. The manual of the package can be consulted for details and examples. In particular, the manual contains a complete example showing a possible way to include the picture (or its \textsf{TikZ} code) in a \LaTeX\ document.

Executing the following command, the created picture should pop up. As this command depends on some other software, namely the operating system, some extra work on the configuration may be needed.
\begin{verbatim}
IP_Splash(tkz);
\end{verbatim}
If everything goes well, the figure (in \textsf{pdf} format) can be saved and included in the \LaTeX\ document in some standard way.
\begin{example}\label{ex:sgp-with-PF}
	Figure~\ref{fig:sgp-with-PF} is just another example. The following \textsf{GAP} session shows some important data: a numerical semigroup and its pseudo-Frobenius numbers. These are highlighted in the figure, in addition to elements of the semigroup, as in Example~\ref{ex:gens-5-13_cond-20}.
	\begin{verbatim}
	gap> ns := NumericalSemigroup(12, 19, 20, 22, 23, 26, 27, 28, 29);;
	gap> Conductor(ns);
	38
	gap> pf := PseudoFrobenius(ns);
	[ 16, 30, 33, 37 ]	
	\end{verbatim}
	
\thisfloatsetup{floatwidth=.41\hsize,capbesidewidth=sidefil,capposition=beside,capbesideposition=right}
	\begin{figure}[h]
		\begin{center}
			\includegraphics[scale=0.8]{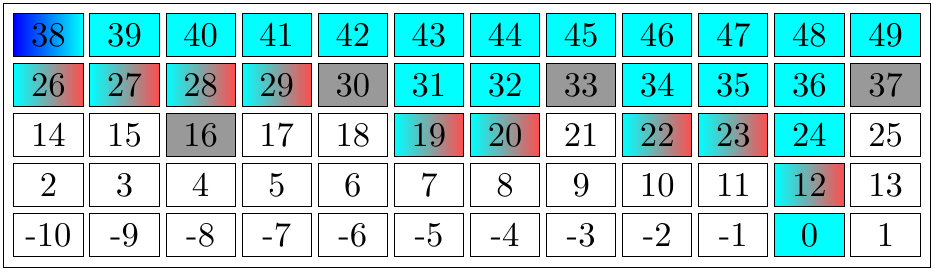}
		\end{center}
		\caption{Pictorial representation of 
			$\langle 12, 19, 20, 22, 23, 26, 27, 28, 29 \rangle$, with the pseudo-Frobenius numbers highlighted. \label{fig:sgp-with-PF}}
	\end{figure} 
\end{example}

The picture can be obtained with just small changes from \textsf{GAP}-code~\ref{code:draw}.
Besides redefining the numerical semigroup, it suffices to replace the line beginning with \textsf{options} by the following two lines of code:
    
\begin{verbatim}
	pf := PseudoFrobenius(ns);;
	options := rec(colors := cls,highlights:=[[c],importants,P,pf]);;
\end{verbatim}

In order to obtain an image just showing the shape, the options can be changed as follows:
\begin{code}\label{code:shape}
\begin{verbatim}# options to produce the shape
	options := rec(
	highlights:=[[],[],[c],importants,[],[],[],[],P],
	cell_width := "6",colsep:="0",rowsep:="0",inner_sep:="2",
	shape_only:=" ",line_width:="0",line_color:="black!20");;	
\end{verbatim}
\end{code}	

\thisfloatsetup{floatwidth=.43\hsize,capbesidewidth=sidefil,capposition=beside,capbesideposition=right}
\begin{figure}[h]
	\begin{center}
		\includegraphics[scale=1.8]{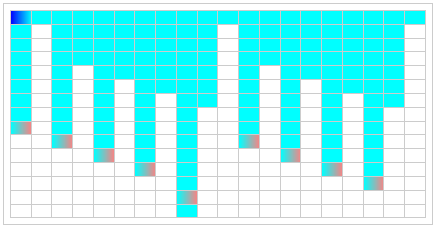}
	\end{center}
	\centering
	\caption{Shape of the semigroup $\langle 20, 49, 58, 67, 76, 85, 94, 103, 112\rangle$ }\label{fig:GAS_m20_h4_d9_l6}
\end{figure}


\section{Two problems posed by Wilf}\label{sec:problems}
This section starts with a few words on Wilf's paper~\cite{Wilf1978AMM-circle}, including a transcript of the two problems that Wilf left open. Then, a little about each problem is said.
\subsection{Wilf's paper}
Wilf's concern was: 
to present an algorithm, which, given a numerical semigroup $S$ and a finite generating set for $S$, finds the conductor of $S$, decides whether a given integer is representable (in terms of the elements of the generating set), finds a representation of an element of $S$, and determines the number of omitted values of $S$.

These are problems that many researchers interested in combinatorial problems related to numerical semigroups are nowadays still concerned with. In a somewhat more modern language, one would say that Wilf was concerned with \emph{the Frobenius problem} (see~\cite{Ramirez-Alfonsin2005Book-Diophantine}), the membership problem, factorization problems (see~\cite{GeroldingerHalter-Koch2006}) and the problem of determining the genus.
These problems continue to be (are at the base of) active fields of research. 

The circle of lights algorithm explicitly given in~\cite{Wilf1978AMM-circle} determines both the conductor and the genus of a numerical semigroup provided that a finite generating set is at hand. Wilf also suggests a few changes to the circle of lights algorithm in order test membership and also to find a factorization. He also observed that to test membership, a suggestion of Brauer~\cite{Brauer1942AJM-problem} should be incorporated: it involves the use of the Apéry set (relative to the multiplicity).
Another observation that I would like to make is that the (space and time) complexity is explicitly given, which is a relevant contribution to the overall quality of Wilf's paper. It is extremely agreeable to read and this without doubt contributes to the success of the problems stated in it.

At the end of Wilf's article one finds the following two problems.
It should be understood that the positive integer $k$ represents the embedding dimension of some numerical semigroup. 

\begin{problem}[\cite{Wilf1978AMM-circle}]\label{prob:wilf-a-b}
	Wilf asked:
    \begin{itemize}
        \item[$(a)$] Is it true that for a fixed $k$ the fraction $\Omega/\chi$ of omitted values is at most $1-(1/k)$ with equality only for the generators $k,k+1,\ldots,2k-1$?
        \item[$(b)$] Let $f(n)$ be the number of semigroups whose conductor is $n$. What is the order of magnitude of $f(n)$ for $n\to\infty$?
    \end{itemize}
\end{problem}

The first problem consists in fact of two problems. They can be stated explicitly as follows:
\begin{problem}\label{prob:wilf-a1-a2} Wilf's problem (a) splits into two problems.
	\begin{itemize}
		\item[$(a.i)$] Is it true that for a fixed $k$ the fraction $\Omega/\chi$ of omitted values is at most $1-(1/k)$?
		\item[$(a.ii)$] Is it true that for a fixed $k$ the fraction $\Omega/\chi$ of omitted values is $1-(1/k)$ only for the generators $k,k+1,\ldots,2k-1$?
    \end{itemize}
\end{problem}

Problem ($a.i$) is nowadays known as \emph{Wilf's conjecture}.
Sylvester's result (which is mentioned in the first page of Wilf's paper) gives counter examples to Problem~$(a.ii)$. Apparently Wilf forgot about them. Nowadays there are other counter examples known, but a characterization of those semigroups for which the equality holds is an open problem (see Section~\ref{sec:wilf-a2}).

\subsection{Problem (\emph{a.i}): Wilf's conjecture}\label{sec:wilf-a1}
To a numerical semigroup~$S$ one can associate the following number denoted $\wilfoper(S)$ and called the \emph{Wilf number of~$S$}:
\begin{equation}\label{eq:wilf-number}
    \wilfoper(S) = \lvert \primitivesoper(S) \rvert\lvert \leftsoper(S) \rvert-\conductoroper(S).
\end{equation}

A numerical semigroup is said to be a \emph{Wilf semigroup} if and only if its Wilf number is nonnegative.
\emph{Wilf's conjecture} can be stated as follows:
\begin{conjecture}[Wilf, 1978]\label{conj:wilf}
  Every numerical semigroup is a Wilf semigroup.
\end{conjecture}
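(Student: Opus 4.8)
A complete proof is not available --- this is, after all, the content of Conjecture~\ref{conj:wilf} --- so what follows is the line of attack that has carried the problem furthest, together with an account of where it stalls. Rearranging~\eqref{eq:wilf-number}, the inequality to be proved is
\[
  \lvert \primitivesoper(S)\rvert \cdot \lvert \leftsoper(S)\rvert \ \ge\ \conductoroper(S).
\]
The plan is to stratify by the depth $\qoper=\qoper(S)$ and argue separately on each stratum $\{S:\qoper(S)=\qoper\}$. The strata $\qoper\le 3$ can be settled directly: for $\qoper=1$ one has $\conductoroper=\multiplicityoper$, hence $S=\{0\}\cup\{\multiplicityoper,\multiplicityoper+1,\dots\}$, so $\lvert\leftsoper\rvert=1$, $\lvert\primitivesoper\rvert=\multiplicityoper$, and $\wilfoper(S)=0$ --- these are exactly the extremal semigroups $\langle \multiplicityoper,\multiplicityoper+1,\dots,2\multiplicityoper-1\rangle$ of Problem~$(a.ii)$; for $\qoper=2$ a short count of residues modulo $\multiplicityoper$ suffices; and $\qoper=3$ is a delicate but still finite combinatorial argument. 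The real content is therefore the passage to arbitrary depth.

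First I would set up the residue bookkeeping relative to the multiplicity. Write $\Aperyoper(S,\multiplicityoper)=\{w_0=0,w_1,\dots,w_{\multiplicityoper-1}\}$ with $w_i\equiv i\pmod{\multiplicityoper}$; every element of $S$ in the column $i$ lies at or above $w_i$, so the left elements split column by column,
\[
  \lvert\leftsoper(S)\rvert \ =\ \sum_{i=0}^{\multiplicityoper-1}\ \max\!\Big(0,\ \big\lceil(\conductoroper-w_i)/\multiplicityoper\big\rceil\Big).
\]
By the observation in Section~\ref{subsec:visualization-and-figures} each column carries at most one primitive, and the primitives below $\conductoroper$ are among the left elements. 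Combining this with $\conductoroper\le\multiplicityoper\,\qoper$ (recorded in Section~\ref{sec:notation}) reduces the target inequality to a purely combinatorial statement about how the Apéry elements $w_i$ are distributed among the $\qoper$ bands $[j\multiplicityoper,(j+1)\multiplicityoper)$. The heuristic that makes the conjecture plausible --- that a numerical semigroup cannot simultaneously have few generators and a long tail of gaps --- has to be turned into a quantitative inequality at exactly this point.

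For bands of small index the left elements are abundant, so the difficulty concentrates in the top band, i.e.\ in controlling the $w_i$ that lie close to $\conductoroper+\multiplicityoper$. This is where the type enters: the pseudo-Frobenius numbers are precisely the numbers $w-\multiplicityoper$ with $w$ maximal in $\Aperyoper(S,\multiplicityoper)$ for the order $a\preceq b\iff b-a\in S$, and Proposition~\ref{prop:type} is the device that, in the known partial results, repeatedly converts ``high'' Apéry elements into extra primitives. I would try to show that each Apéry element in the top one or two bands either is itself primitive or else forces, through the semigroup closure, enough additional primitives or left elements lower down to pay for its share of $\conductoroper$; summing these local contributions would give $\lvert\primitivesoper\rvert\cdot\lvert\leftsoper\rvert\ge\conductoroper$.

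The main obstacle is precisely this summation in unbounded depth. For each fixed $\qoper$ the space of band distributions is finite and one may hope to exhaust it --- this is essentially why the low-depth strata and all small cases can be verified --- but every known local estimate loses a constant factor per band, so the argument fails to close once $\qoper$ is large and the near-extremal configurations have $\lvert\primitivesoper\rvert$ only barely above $\multiplicityoper$. Replacing that string of lossy estimates by an essentially exact counting identity is the crux, and it is the reason this approach has not resolved the conjecture.
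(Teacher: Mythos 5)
The statement you were asked to prove is Wilf's conjecture itself, which is open; the paper states it only as a conjecture and contains no proof of it, so there is nothing for your proposal to be measured against except the partial results it surveys. You correctly recognize this and do not claim a proof, which is the right call. As a description of the state of the art your sketch is accurate: the depth-$1$ computation is correct ($\conductoroper=\multiplicityoper$, $\lvert\leftsoper\rvert=1$, $\wilfoper(S)=0$); depth $\le 2$ is Kaplan's result and depth $\le 3$ is Eliahou's Theorem~\ref{th:large-mult-Wilf-jems}, though calling the latter ``a delicate but still finite combinatorial argument'' undersells it --- Eliahou's proof goes through Macaulay's theorem on the growth of Hilbert functions, and it is the depth-$\ge 4$ regime where all known counterexamples to the stronger inequality $\eliahouoper(S)\ge 0$ live. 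Your column-by-column count of $\lvert\leftsoper\rvert$ via the Ap\'ery set, the one-primitive-per-column observation, and the characterization of pseudo-Frobenius numbers as $w-\multiplicityoper$ for $w$ maximal in $\Aperyoper(S,\multiplicityoper)$ are all correct and are indeed the machinery behind the Sammartano--Dhayni--Eliahou line of results on large embedding dimension. Your diagnosis of where the approach stalls (lossy per-band estimates that do not close at unbounded depth when $\embeddingdimensionoper$ is small relative to $\multiplicityoper$) is consistent with the fact that the surviving open territory is exactly $3\embeddingdimensionoper<\multiplicityoper$ and $\conductoroper>3\multiplicityoper$. In short: no gap to report beyond the one the entire literature shares, and your proposal honestly locates it.
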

It is a simple exercise to verify that Conjecture~\ref{conj:wilf} is precisely Problem~\ref{prob:wilf-a1-a2}~($a.i$).
\medskip

There is another number that can be associated to a numerical semigroup, just as Wilf number is, and which has revealed great importance in recent research (as the reader will be able to confirm, in particular when reading Section~\ref{sec:large-mult}).
Let $S$ be a numerical semigroup and let $D= \primitivesoper(S)\cap \{\conductoroper,\dots,\conductoroper+\multiplicityoper-1\}$ be the set of non primitives in the threshold interval.
Eliahou~\cite{Eliahou2018JEMS-Wilfs} associated to $S$ the number $\eliahouoper(S)$ that appears in Equation~(\ref{eq:eliahou-number}) below and used the notation $\wilfoper_0(S)$ to represent it. I prefer the notation $\eliahouoper(S)$, and use the terminology \emph{Eliahou number of~$S$}:
\begin{equation}\label{eq:eliahou-number}
\eliahouoper(S) = \lvert \primitivesoper\cap \leftsoper\rvert\lvert \leftsoper \rvert - {\qoper} \lvert D\rvert + \qoper\multiplicityoper - \conductoroper.
\end{equation}

Eliahou~\cite[Pg.~2112]{Eliahou2018JEMS-Wilfs} observed that there are numerical semigroups with negative Eliahou number and stated the following problem which is still open.
\begin{problem}
	Give a characterization of the class of numerical semigroups whose Eliahou number is negative.
\end{problem}

\subsection{Problem ($\emph{a.ii}$): another open problem}\label{sec:wilf-a2}
A very nice result of Sylvester~\cite{Sylvester1984ET-Mathematical} gives a formula for the Frobenius number of a numerical semigroup of embedding dimension $2$. A closed formula (of a certain type) for the Frobenius number of a numerical semigroup of higher embedding dimension is not at reach for semigroups of bigger embedding dimension (see~\cite{Curtis1990MS-formulas} or \cite[Cor. 2.2.2]{Ramirez-Alfonsin2005Book-Diophantine}). Sylvester's results can be written as follows (see~\cite{RosalesGarcia2009Book-Numerical}): if $S = \langle a, b\rangle$ is a numerical semigroup of embedding dimension $2$, then  $\Frobeniusoper(S) = ab-a-b$, and $\genusoper(S)=\conductoroper(S)/2$. From this, it is immediate that for a numerical semigroup $S$ of embedding dimension $2$, $\wilfoper(S)=0$.
The fact that numerical semigroups of the form $\langle \multiplicityoper, k\multiplicityoper+1,\ldots k\multiplicityoper+\multiplicityoper-1\rangle$ (which are of maximal embedding dimension and generated by some generalized arithmetic sequences) have Wilf number equal to $0$ is straightforward (see~\cite{FroebergGottliebHaeggkvist1987SF-numerical,MoscarielloSammartano2015MZ-conjecture}). 

Whether these are the only numerical semigroups for which Wilf number is $0$ is a slight modification of Problem~\ref{prob:wilf-a1-a2}($a.ii$) and is open. I rephrase the question stated by  Moscariello and Sammartano.

\begin{problem}[{\cite[Question~8]{MoscarielloSammartano2015MZ-conjecture}}]\label{prob:wilf-equality}
	\label{quest:W-equal-0}
	Let $S=\langle \multiplicityoper, g_2,\ldots,g_{\embeddingdimensionoper}\rangle$ be a numerical semigroup with multiplicity~$\multiplicityoper$ and embedding dimension~$\embeddingdimensionoper$. Is it true that if $\wilfoper(S)=0$, then $\embeddingdimensionoper(S)=2$ or $\embeddingdimensionoper(S)=\multiplicityoper(S)$ and there exists an integer $k>1$ such that $g_i=k\multiplicityoper+(i-1)$, for $i\in\{2,\ldots,\embeddingdimensionoper\}$? 
\end{problem}

Moscariello and Sammartano observed that in order to answer affirmatively this question it suffices to prove that for a semigroup with Wilf number equal to $0$, either its embedding dimension is $2$ or it has maximal embedding dimension.

They also observed that no numerical semigroup of genus up to $35$ provides a negative answer to the question.

Kaplan~\cite[Prop.~26]{Kaplan2012JPAA-Counting} has shown that Problem~\ref{prob:wilf-equality} has a positive answer in the case of numerical semigroups whose multiplicity is at least half of the conductor. The same holds for numerical semigroups of depth $3$ (see a remark by Sammartano in~\cite[Rem.~6.6]{Eliahou2018JEMS-Wilfs}), thus concluding that there are no counter examples among the semigroups satisfying $\conductoroper\le 3\multiplicityoper$.

\subsection{Problem (b): counting numerical semigroups}

Wilf's Problem~\ref{prob:wilf-a-b}($b$) can be viewed as a problem about counting numerical semigroups by conductor. Backelin~\cite{Backelin1990MS-number} addressed this problem. A slight modification consists on counting by genus. Great attention has been given to this problem after Bras-Amorós~\cite{Bras-Amoros2008SF-Fibonacci} proposed some conjectures on the theme. Some of these conjectures were solved by Zhai~\cite{Zhai2012SF-Fibonacci}, while others remain open.
For an excellent survey (which in particular contains references for counting by conductor and has an outline of Zhai's proofs), see Kaplan~\cite{Kaplan2017AMM-Counting}.

Denote respectively by $N(g)$ and $t(g)$ the number of numerical semigroups of genus $g$ and the number of numerical semigroups of genus $g$ satisfying $\conductoroper(S) \le 3\multiplicityoper(S)$.

In the paper where he proved some of the conjectures of Bras-Amorós (one of them being that the sequence $\big(N(g)\big)$ behaves like the Fibonacci sequence), Zhai also proved that the proportion of numerical semigroups such that $\conductoroper(S) \le 3\multiplicityoper(S)$ tends to $1$ as $g$ tends to infinity, as conjectured by Zhao~\cite{Zhao2010SF-Constructing}.

\begin{proposition}[\cite{Zhai2012SF-Fibonacci}]\label{prop:Zhao-Zhai}
	With the notation introduced, the following holds:
	$$\lim_{g\to\infty} \frac{t(g)}{N(g)}=1.$$
\end{proposition}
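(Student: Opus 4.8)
The strategy is to stratify the numerical semigroups of genus $g$ by their depth $\qoper$ and to show that depth at least $4$ is exponentially rare. Write $a(g)$ for the number of numerical semigroups of genus $g$ with $\qoper(S)\le 2$ (equivalently $\conductoroper(S)\le 2\multiplicityoper(S)$) and $b(g)$ for the number with $\qoper(S)\ge 4$ (equivalently $\conductoroper(S)>3\multiplicityoper(S)$). Since the semigroups of genus $g$ split into those of depth $\le 3$, of which there are $t(g)$, and those of depth $\ge 4$, we have $N(g)=t(g)+b(g)$, while $a(g)\le t(g)\le N(g)$; hence
\[
1\;\ge\;\frac{t(g)}{N(g)}\;=\;1-\frac{b(g)}{N(g)}\;\ge\;1-\frac{b(g)}{a(g)} .
\]
The proof thus reduces to two estimates: an exponential lower bound on $a(g)$, and the bound $b(g)=o(\varphi^{g})$ with $\varphi=(1+\sqrt 5)/2$; the quotient then tends to $1$.

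The lower bound I would get by writing down the depth-$\le 2$ semigroups by hand. If $\conductoroper(S)\le 2\multiplicityoper(S)$ then $[2\multiplicityoper(S),\infty)\subseteq S$, so with $m=\multiplicityoper(S)$ one has $S=\langle A\rangle_{2m}$ where $A=S\cap\{m,m+1,\dots,2m-1\}\ni m$; conversely, for every $m\ge 1$ and every $A$ with $m\in A\subseteq\{m,\dots,2m-1\}$, the set $\langle A\rangle_{2m}$ is a numerical semigroup of multiplicity $m$ and depth at most $2$ (closure is automatic, since a sum of two elements of $A$ is $\ge 2m$), of genus $2m-1-\lvert A\rvert$. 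Hence $a(g)=\sum_{m\ge 1}\binom{m-1}{2m-2-g}=\sum_{k\ge 0}\binom{g-k}{k}=F_{g+1}$ (with $F_1=F_2=1$) by the classical Pascal-triangle identity for Fibonacci numbers, so that $a(g)\sim\varphi^{\,g+1}/\sqrt 5$, and it remains to prove $b(g)=o(\varphi^{g})$.

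For the upper bound I would analyse a semigroup $S$ of multiplicity $m$ and depth $q$ through its \emph{block profile} $m_i=\lvert S\cap[im,(i+1)m)\rvert$. One has $m_0=1$ and $m_i=m$ for $i\ge q$; the inclusion $m+\bigl(S\cap[im,(i+1)m)\bigr)\subseteq S\cap[(i+1)m,(i+2)m)$ gives the monotonicity $1\le m_1\le\cdots\le m_{q-1}\le m-1$; and $\genusoper(S)=(m-1)+\sum_{i=1}^{q-1}(m-m_i)$. Recording $S$ by $S\cap[m,2m)$ together with, for each later block $[im,(i+1)m)$, the elements not of the form $m+x$ with $x\in S\cap[(i-1)m,im)$, one obtains an injective encoding, whence the number of $S$ with a prescribed profile is at most the multinomial coefficient $\binom{m-1}{\,m_1-1,\ m_2-m_1,\ \dots,\ m_{q-1}-m_{q-2},\ m-m_{q-1}}$. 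Summing this over profiles is exact for $q\le 2$ (there no closure relation is lost, and one recovers $a(g)=F_{g+1}$), but for $q\ge 3$ it is hopelessly crude: the resulting overestimate of $b(g)$ is of order $2^{g}$, far above the target. The slack to be recovered is the full sub-additivity $\bigl(S\cap[im,(i+1)m)\bigr)+\bigl(S\cap[jm,(j+1)m)\bigr)\subseteq S$ for all $i,j$ --- equivalently, the subadditivity-type relations among the ``Kunz coordinates'' of $\Aperyoper(S,\multiplicityoper)$, whose sum is $\genusoper(S)$. The estimates of Zhao~\cite{Zhao2010SF-Constructing}, sharpened by Zhai~\cite{Zhai2012SF-Fibonacci}, show that reaching depth $q\ge 4$ forces several consecutive blocks to be small while these relations then push many elements into every subsequent block, so the genus ``spent'' on the small blocks buys far too little combinatorial freedom; the count decays geometrically in $g$, uniformly enough over $q\ge 4$ to give $b(g)=o(\varphi^{g})$.

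The crux --- and the main obstacle --- is exactly this last estimate: closure-free bookkeeping overshoots $2^{g}$, so the whole difficulty is in quantifying how sharply the semigroup axioms thin out the deep semigroups, and in pinning the resulting growth rate strictly below $\varphi$. This is precisely where Zhai's delicate combinatorial analysis is indispensable; it in fact yields the stronger asymptotic $N(g)\sim C\varphi^{g}$ for some constant $C>0$, together with $b(g)=o(\varphi^{g})$ (so the depth-$3$ semigroups carry the main term), and Proposition~\ref{prop:Zhao-Zhai} is an immediate consequence.
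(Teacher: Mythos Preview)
The survey itself does not prove this proposition; it merely quotes it from Zhai~\cite{Zhai2012SF-Fibonacci} as a known result. Your write-up actually goes considerably further than the paper does: the stratification by depth, the exact Fibonacci count $a(g)=F_{g+1}$ for the depth-$\le 2$ semigroups, and the reduction of the statement to $b(g)=o(\varphi^{g})$ are all correct and well organised.

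The gap is that at the decisive point --- bounding the number of semigroups of depth $\ge 4$ --- you do not give an argument. You observe (correctly) that the closure-free multinomial encoding overshoots to order $2^{g}$, and then appeal to ``Zhai's delicate combinatorial analysis'' to bring the count below $\varphi^{g}$. That is not a proof but a citation, and here it is circular: the proposition you are trying to establish is precisely the one being quoted from Zhai. All of the actual content of Proposition~\ref{prop:Zhao-Zhai} lives in that estimate; the Fibonacci lower bound and the arithmetic with $t(g)/N(g)$ are the easy wrapping. If you want this to stand as a proof rather than an outline, you must actually carry out (or at least substantively sketch) the Apéry/Kunz-coordinate analysis that pins the growth rate of $b(g)$ strictly below $\varphi$ --- for instance along the lines of Zhao's estimates as sharpened by Zhai, or via the later approaches surveyed in Kaplan~\cite{Kaplan2017AMM-Counting}.
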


I will leave here a question that can be stated in a similar way to Zhao's conjecture. It will be better appreciated when reading Section~\ref{sec:large-emb-dim} (and confronting with Section~\ref{sec:large-mult}).

Denote by $p(g)$ the number of numerical semigroups of genus $g$ satisfying $\embeddingdimensionoper(S) \ge \multiplicityoper(S)/3$.
\begin{question}\label{quest:emb-dim-asymptotics}
	Does $\lim_{g\to\infty} \frac{p(g)}{N(g)}$ exist?
\end{question}

\section{Some classes of Wilf semigroups}\label{sec:particular-cases}
As already observed, it follows from a result of Sylvester that semigroups of embedding dimension $2$ have Wilf number equal to $0$. 
In particular, semigroups of embedding dimension $2$ are Wilf semigroups. 
Many other classes are known to consist of Wilf semigroups. This section gives an account of a large number of them.

The theme is rather popular and it is frequent to check a family of numerical semigroups against Wilf's conjecture, whenever that new family of numerical semigroups is investigated for some possibly other reason.
It may well happen that some results are not referred to in this paper. This is far from meaning that I do not consider the ideas involved important.
In a few cases this may be a matter of choice, but most probably it simply means that the results are not part of my very restricted knowledge. For that, I humbly express my apologies both to the authors and the readers.
 
The results are split into several subsections, according to a criterion that seems difficult to explain. It finally just aims at putting results together so that they can be compared with ease.

Most of the families considered are described through at least two combinatorial invariants such as embedding dimension, the multiplicity or the conductor.
Exceptions (besides finite sets) are families that are completely described by using only one invariant among the embedding dimension, the multiplicity or the number of left elements.
 
Inside each subsection several results are mentioned (through precise numbered statements or just in the text) and there are cases in which the most general one is stated as a theorem. In a few cases there are results mentioned in more than one subsection.
 
In the first subsection, there is an emphasis on a particular ingredient used along the proofs of the various results. The ingredient is an inequality involving the type. As illustrations of the results that can be found there, we shall see that semigroups with embedding dimension up to three, almost symmetric numerical semigroups and those semigroups generated by generalized arithmetic sequences are Wilf semigroups. 
 
The second subsection is about families of semigroups (somehow explicitly) given by some sets of generators. The examples therein share the particularity that all the members have negative Eliahou number.

The third subsection refers to numerical semigroups with nonnegative Eliahou number.
 
The fourth subsection is dedicated to constructions that are somehow natural. In fact, only one such construction is given here: dilations of numerical semigroups. This subsection could certainly be filled with other constructions. My choice just reflects the feeling that possible generalizations could be worth exploring.

Then there is a subsection devoted to numerical semigroups of small multiplicity.

The sixth subsection is about results in which the main attention is given to semigroups with large embedding dimension, when compared to the multiplicity.

Next there appears a subsection containing a result involving numerical semigroups with big multiplicities and possibly small embedding dimensions.
 
The eighth subsection is similar to the sixth, but now the results have an emphasis on semigroups with large multiplicity, when compared to the conductor.
 
In ninth subsection there is a result taking into account an invariant not previously considered (at least in a fundamental way, to the best of my knowledge). It is the second smallest primitive, sometimes called the \emph{ratio}.
 
The final subsection is concerned with families of numerical semigroups that can be described using only one combinatorial invariant. 

\subsection{The type as an important ingredient}

The following proposition, due to Fröberg, Gottlieb and Haeggkvist, is at the base of some results on Wilf's conjecture. It implies that semigroups whose type is smaller than its embedding dimension are Wilf.

\begin{proposition}[{\cite[Theorem 20]{FroebergGottliebHaeggkvist1987SF-numerical}}]\label{prop:type}
    Let $S$ be a numerical semigroup. Then $\conductoroper(S)\le(\typeoper(S)+1)\left|\leftsoper(S)\right|$.
\end{proposition}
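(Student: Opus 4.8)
The plan is to convert the statement into a counting inequality about gaps. Since the excerpt already records $\genusoper(S)+\lvert\leftsoper(S)\rvert=\conductoroper(S)$, the desired bound $\conductoroper(S)\le(\typeoper(S)+1)\lvert\leftsoper(S)\rvert$ is equivalent to
\[
\genusoper(S)\le \typeoper(S)\,\lvert\leftsoper(S)\rvert,
\]
that is, to bounding the number of gaps of $S$ by the product of the type and the number of left elements. So the first step is this reduction. (The degenerate case $S=\mathbb{N}$, where both sides are $0$, is dealt with separately at the outset.)

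The heart of the argument is the following claim: for every gap $g$ of $S$ there is a pseudo-Frobenius number $f$ of $S$ with $f-g\in S$. To see it, one builds a strictly increasing sequence of gaps: put $g_0=g$; if $g_i$ is not pseudo-Frobenius, then by definition there is some $s_i\in S\setminus\{0\}$ with $g_i+s_i\notin S$, and since $g_i+s_i$ is a positive integer outside $S$ it is again a gap and is strictly larger, so set $g_{i+1}=g_i+s_i$. All gaps lie in $\{1,\dots,\Frobeniusoper(S)\}$, so the process stops, necessarily at a pseudo-Frobenius number $f$; and $f-g=\sum_i s_i$ is a (possibly empty) sum of elements of $S$, hence lies in $S$. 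Conceptually this is just the observation that every element of $\mathbb{Z}\setminus S$ lies below a maximal one for the partial order $a\preceq b\iff b-a\in S$, and the maximal elements are exactly the pseudo-Frobenius numbers.

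With the claim in hand, choose for each gap $g$ one such $f=f(g)\in\PFoper(S)$ and set $\ell(g)=f(g)-g$. Then $\ell(g)\in S$ by the claim, $\ell(g)\ge 0$ (so $f(g)\ge g$), and $\ell(g)\le f(g)\le\Frobeniusoper(S)<\conductoroper(S)$, whence $\ell(g)\in\leftsoper(S)$ (recall $0\in\leftsoper(S)$). The assignment $g\mapsto\bigl(f(g),\ell(g)\bigr)\in\PFoper(S)\times\leftsoper(S)$ is injective, since $g=f(g)-\ell(g)$ can be read off from the image. Hence the number of gaps is at most $\lvert\PFoper(S)\rvert\cdot\lvert\leftsoper(S)\rvert=\typeoper(S)\,\lvert\leftsoper(S)\rvert$, which is exactly the reduced goal.

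There is no serious obstacle here; the only points that need a little care are the finiteness/termination of the sequence of gaps in the claim (which rests on all gaps being bounded above by $\Frobeniusoper(S)$) and the inequality $\ell(g)<\conductoroper(S)$, which is what makes $\ell(g)$ a genuine left element rather than merely a nonnegative element of $S$.
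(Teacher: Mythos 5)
Your proof is correct. Note that the survey does not actually prove this statement; it is quoted from Fr\"oberg--Gottlieb--H\"aggkvist, so there is no in-paper argument to compare against. What you give is essentially the standard proof of that cited theorem: reduce to $\genusoper(S)\le\typeoper(S)\,\lvert\leftsoper(S)\rvert$ via $\genusoper+\lvert\leftsoper\rvert=\conductoroper$, observe that every gap is dominated (in the order $a\preceq b\iff b-a\in S$) by a pseudo-Frobenius number, and inject the set of gaps into $\PFoper(S)\times\leftsoper(S)$ via $g\mapsto(f(g),f(g)-g)$. All the delicate points are handled: termination of the increasing chain of gaps, the fact that $f(g)-g$ lies in $S$ and is strictly below the conductor, and the degenerate case $S=\mathbb{N}$.
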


By proving that the type of a numerical semigroup of embedding dimension~$3$ is either~$1$ or~$2$ (\cite[Th.~11]{FroebergGottliebHaeggkvist1987SF-numerical}) and using the fact that $\conductoroper\ge 2\left|\leftsoper\right|$ referred in Section~\ref{sec:notation}, they obtained that numerical semigroups of embedding dimension~$3$ are Wilf, a result that Dobbs and Matthews~\cite[Cor.~2.6]{DobbsMatthews2006} reproved using a different approach.
Using Sylvester's result for embedding dimension~$2$ and the fact that~$\mathbb{N}$ is Wilf, the same authors obtained the following result.

\begin{theorem}[{\cite[Th.~20]{FroebergGottliebHaeggkvist1987SF-numerical}, \cite[Th.~2.11]{DobbsMatthews2006}}]\label{th:type<=3}
    Numerical semigroups of embedding dimension smaller than $4$ are Wilf. 
\end{theorem}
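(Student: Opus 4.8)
The plan is to reduce the statement to three exhaustive cases according to the embedding dimension $\embeddingdimensionoper(S)\in\{1,2,3\}$ and dispatch each separately, using the tools assembled above.

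First I would dispose of $\embeddingdimensionoper(S)=1$. The only numerical semigroup with a single generator is $S=\mathbb{N}=\langle 1\rangle$, for which $\conductoroper(S)=0$, $\leftsoper(S)=\emptyset$, and $\primitivesoper(S)=\{1\}$, so $\wilfoper(S)=1\cdot 0-0=0\ge 0$; hence $\mathbb{N}$ is Wilf. Next, for $\embeddingdimensionoper(S)=2$, write $S=\langle a,b\rangle$ with $\gcd(a,b)=1$. By Sylvester's formula recalled in Section~\ref{sec:wilf-a2} one has $\Frobeniusoper(S)=ab-a-b$, hence $\conductoroper(S)=ab-a-b+1=(a-1)(b-1)$, and $\genusoper(S)=\conductoroper(S)/2$. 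Then $\lvert\leftsoper(S)\rvert=\conductoroper(S)-\genusoper(S)=\conductoroper(S)/2$, so $\wilfoper(S)=2\cdot\bigl(\conductoroper(S)/2\bigr)-\conductoroper(S)=0\ge 0$, and $S$ is Wilf.

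The substantive case is $\embeddingdimensionoper(S)=3$. Here I would invoke the theorem of Fröberg, Gottlieb and Haeggkvist that the type of a numerical semigroup of embedding dimension $3$ satisfies $\typeoper(S)\in\{1,2\}$. Combining this with Proposition~\ref{prop:type}, namely $\conductoroper(S)\le(\typeoper(S)+1)\lvert\leftsoper(S)\rvert$, we obtain $\conductoroper(S)\le 3\lvert\leftsoper(S)\rvert$. On the other hand $\embeddingdimensionoper(S)=3$ means $\lvert\primitivesoper(S)\rvert=3$, so
\[
\wilfoper(S)=\lvert\primitivesoper(S)\rvert\,\lvert\leftsoper(S)\rvert-\conductoroper(S)=3\lvert\leftsoper(S)\rvert-\conductoroper(S)\ge 0,
\]
which is exactly the Wilf inequality. (One can even improve this: using $\typeoper(S)\le 2$ together with the bound $\conductoroper(S)\ge 2\lvert\leftsoper(S)\rvert$ from Section~\ref{sec:notation}, the case $\typeoper(S)=1$ forces $\conductoroper(S)\le 2\lvert\leftsoper(S)\rvert$ and thus $\wilfoper(S)\ge\lvert\leftsoper(S)\rvert$; but for the theorem as stated, the crude estimate suffices.)

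The only genuine obstacle is the input fact that $\typeoper(S)\le 2$ when $\embeddingdimensionoper(S)=3$; everything else is bookkeeping with the definitions and Sylvester's formula. Since we are permitted to assume results stated earlier — and Proposition~\ref{prop:type} is available — the cleanest write-up simply cites \cite[Th.~11]{FroebergGottliebHaeggkvist1987SF-numerical} for the type bound and then performs the one-line computation above. An alternative, avoiding the type theorem entirely, is to follow the Dobbs–Matthews route \cite[Th.~2.11]{DobbsMatthews2006}, which handles embedding dimension $3$ by a direct analysis of the Apéry set $\Aperyoper(S,\multiplicityoper)$; I would mention this as a remark but carry out the proof via Proposition~\ref{prop:type}, as it is shorter and reuses machinery already in place.
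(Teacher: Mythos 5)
Your proposal is correct and follows essentially the same route as the paper: $\mathbb{N}$ is Wilf for embedding dimension $1$, Sylvester's formula gives $\wilfoper(S)=0$ for embedding dimension $2$, and for embedding dimension $3$ the Fröberg--Gottlieb--Häggkvist bound $\typeoper(S)\le 2$ combined with Proposition~\ref{prop:type} yields $\conductoroper(S)\le 3\lvert\leftsoper(S)\rvert$, i.e.\ $\wilfoper(S)\ge 0$. The computations in each case check out, so nothing further is needed.
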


Since there is no upper bound for the type of numerical semigroups of embedding dimension bigger than $3$, (see~\cite[pg.~75]{FroebergGottliebHaeggkvist1987SF-numerical}, for an example due to Backelin), Proposition~\ref{prop:type} can not be used to obtain other general results (but can, and has been applied successfully to particular families of semigroups).

A numerical semigroup $S$ is said to be \emph{irreducible} if it cannot be expressed as the intersection of
two numerical semigroups properly containing it.
$S$ is said to be \emph{symmetric} if it is irreducible and $\Frobeniusoper(S)$ is odd and it is said to be \emph{pseudo-symmetric} if it is irreducible and $\Frobeniusoper(S)$ is even.
One could take the following as definition (see~\cite[Cor~4.5]{RosalesGarcia2009Book-Numerical}): $S$ is \emph{symmetric} if and only if $\genusoper(S) = \conductoroper(S)/2$, while $S$ is \emph{pseudo-symmetric} if and only if $\genusoper(S) = (\conductoroper(S)+1)/2$. 

It can be proved as a simple exercise that irreducible numerical semigroups are Wilf. A more involving proof could be to observe that the type of this class of semigroups does not exceed $2$. 
\begin{proposition}[{\cite[Prop.~2.2]{DobbsMatthews2006}}]\label{wilf-particular:irreducible}
    Irreducible numerical semigroups are Wilf.
    
\end{proposition}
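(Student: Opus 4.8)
The plan is to combine Proposition~\ref{prop:type} with the classical fact that an irreducible numerical semigroup has type at most~$2$ (type~$1$ in the symmetric case, type~$2$ in the pseudo-symmetric case), and then to dispose separately of the low embedding dimension semigroups, to which Proposition~\ref{prop:type} alone is of no help, by the elementary facts already recorded in the text.

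First I would split on the embedding dimension $\embeddingdimensionoper(S)=\lvert\primitivesoper(S)\rvert$. If $\embeddingdimensionoper(S)=1$, then $S=\mathbb{N}$, so $\leftsoper(S)=\emptyset$ and $\conductoroper(S)=0$, giving $\wilfoper(S)=0$. If $\embeddingdimensionoper(S)=2$, then Sylvester's formula (recalled in Section~\ref{sec:wilf-a2}) gives $\genusoper(S)=\conductoroper(S)/2$, hence $\lvert\leftsoper(S)\rvert=\conductoroper(S)-\genusoper(S)=\conductoroper(S)/2$, and therefore $\wilfoper(S)=2\cdot\tfrac{\conductoroper(S)}{2}-\conductoroper(S)=0$. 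So in both cases $S$ is Wilf (indeed with Wilf number~$0$).

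It then remains to treat $\embeddingdimensionoper(S)\ge 3$, and here I would invoke irreducibility: being symmetric or pseudo-symmetric, $S$ has $\typeoper(S)\le 2$, so Proposition~\ref{prop:type} yields $\conductoroper(S)\le(\typeoper(S)+1)\lvert\leftsoper(S)\rvert\le 3\lvert\leftsoper(S)\rvert$. Since $\lvert\primitivesoper(S)\rvert=\embeddingdimensionoper(S)\ge 3$, this gives $\wilfoper(S)=\lvert\primitivesoper(S)\rvert\lvert\leftsoper(S)\rvert-\conductoroper(S)\ge 3\lvert\leftsoper(S)\rvert-\conductoroper(S)\ge 0$, as desired.

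The only step that is not a one-line computation is the bound $\typeoper(S)\le 2$ for irreducible~$S$; this is the standard characterization of symmetry (type~$1$) together with the standard property of pseudo-symmetry (type~$2$), and can be cited from~\cite{RosalesGarcia2009Book-Numerical}. A more self-contained route would avoid the type altogether, using directly $\genusoper(S)=\conductoroper(S)/2$ in the symmetric case and $\genusoper(S)=(\conductoroper(S)+1)/2$ in the pseudo-symmetric case together with $\lvert\primitivesoper(S)\rvert\ge 2$; the mild subtlety there is that the pseudo-symmetric estimate $2\lvert\leftsoper(S)\rvert=\conductoroper(S)-1$ falls just short of what is needed, so one would additionally have to note that a pseudo-symmetric semigroup cannot have embedding dimension~$2$ — which is precisely why organizing the argument around the case split on $\embeddingdimensionoper(S)$ above is cleaner.
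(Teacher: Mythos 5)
Your proof is correct, and it follows exactly the route the paper itself sketches in the sentence preceding the statement (``a more involving proof could be to observe that the type of this class of semigroups does not exceed $2$''), i.e.\ $\typeoper(S)\le 2$ for irreducible $S$ combined with Proposition~\ref{prop:type}, with the embedding-dimension $\le 2$ cases correctly disposed of via Sylvester. Your closing remark about why the direct genus computation needs $\embeddingdimensionoper\ge 3$ in the pseudo-symmetric case is also accurate and is precisely the small point one must notice in the paper's other suggested route (the ``simple exercise'').
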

The above result was generalized by Marco La Valle in~\cite[Th. 5.5]{Barucci2009}. Before stating this generalization, a further definition is needed.
A numerical semigroup is said to be \emph{almost symmetric} if its genus is the arithmetic mean of its Frobenius number and its type (see~\cite{BarucciFroeberg1997JA-One-dimensional}). 
It is a class of semigroups that includes the symmetric ones and the pseudo-symmetric. 

\begin{proposition}[{\cite[Th.~5.5]{Barucci2009}}]\label{prop:wilf-almost-symmetric}
	Almost symmetric numerical semigroups are Wilf.
\end{proposition}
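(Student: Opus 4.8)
The plan is to deduce Wilf's inequality from Proposition~\ref{prop:type} once a bound on the type of an almost symmetric semigroup is available. The case $S=\mathbb{N}$ is immediate ($\wilfoper(\mathbb{N})=0$), so assume $S\ne\mathbb{N}$. First I would rewrite the hypothesis: from $\genusoper(S)=\bigl(\Frobeniusoper(S)+\typeoper(S)\bigr)/2$ together with $\genusoper(S)+\lvert\leftsoper(S)\rvert=\conductoroper(S)$ and $\conductoroper(S)=\Frobeniusoper(S)+1$ one obtains
\[
\typeoper(S)=\conductoroper(S)-2\lvert\leftsoper(S)\rvert+1 ,
\]
so that $\wilfoper(S)=\embeddingdimensionoper(S)\lvert\leftsoper(S)\rvert-\conductoroper(S)\ge 0$ is equivalent to $(\embeddingdimensionoper(S)-2)\lvert\leftsoper(S)\rvert\ge\typeoper(S)-1$. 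Since Proposition~\ref{prop:type} gives $\conductoroper(S)\le(\typeoper(S)+1)\lvert\leftsoper(S)\rvert$, it suffices to prove the purely structural statement $\typeoper(S)\le\embeddingdimensionoper(S)-1$: combining the two, $\wilfoper(S)\ge\embeddingdimensionoper(S)\lvert\leftsoper(S)\rvert-(\typeoper(S)+1)\lvert\leftsoper(S)\rvert=(\embeddingdimensionoper(S)-\typeoper(S)-1)\lvert\leftsoper(S)\rvert\ge 0$.

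The whole problem thus concentrates on showing $\typeoper(S)\le\embeddingdimensionoper(S)-1$ for almost symmetric $S$. I would argue inside the Apéry set $\Aperyoper(S,\multiplicityoper)$ with the partial order $a\preceq b\iff b-a\in S$: the type is the number of $\preceq$-maximal elements of $\Aperyoper(S,\multiplicityoper)$ (these are exactly $\PFoper(S)+\multiplicityoper$), while the $\preceq$-minimal elements of $\Aperyoper(S,\multiplicityoper)\setminus\{0\}$ are precisely the minimal generators other than $\multiplicityoper$, so there are $\embeddingdimensionoper(S)-1$ of the latter. The almost symmetry enters through Nari's theorem: writing $\PFoper(S)=\{f_1<\cdots<f_t=\Frobeniusoper(S)\}$, $S$ is almost symmetric if and only if $f_i+f_{t-i}=\Frobeniusoper(S)$ for $1\le i\le t-1$. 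Under this symmetry the map $x\mapsto\Frobeniusoper(S)+\multiplicityoper-x$ acts as an order-reversing partial bijection of $\Aperyoper(S,\multiplicityoper)$; using it one attaches to each pseudo-Frobenius number a distinct minimal generator $\ne\multiplicityoper$ (equivalently, one injects $\PFoper(S)$ into $\primitivesoper(S)\setminus\{\multiplicityoper\}$), which yields $\typeoper(S)\le\embeddingdimensionoper(S)-1$ and hence, by the first paragraph, $\wilfoper(S)\ge 0$. The equality case $\wilfoper(S)=0$ then forces $\typeoper(S)=\embeddingdimensionoper(S)-1$ and $\conductoroper(S)=(\typeoper(S)+1)\lvert\leftsoper(S)\rvert$, consistent with the semigroups $\langle\multiplicityoper,k\multiplicityoper+1,\ldots,k\multiplicityoper+\multiplicityoper-1\rangle$ of Section~\ref{sec:wilf-a2}.

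The hard part is exactly the inequality $\typeoper(S)\le\embeddingdimensionoper(S)-1$; the remainder is bookkeeping with facts already in the excerpt. Its delicacy is that the type is unbounded for general numerical semigroups even in embedding dimension $4$ (Backelin's example), so almost symmetry must be exploited in an essential way. Turning Nari's numerical symmetry of the pseudo-Frobenius set into an honest injection into the minimal generators — that is, controlling the $\preceq$-structure of $\Aperyoper(S,\multiplicityoper)$, not merely the positions of the $f_i$ — is the real content; an alternative route is an induction on $\typeoper(S)$ in which one removes a minimal pseudo-Frobenius number and uses Nari's theorem to verify that one stays inside the almost symmetric class, but this still needs care to control how the embedding dimension changes along the way.
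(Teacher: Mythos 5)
The survey gives no proof of this proposition --- it only cites La Valle's theorem in \cite{Barucci2009} --- so your argument has to stand on its own. Your opening reduction is correct: from $\genusoper=(\Frobeniusoper+\typeoper)/2$ one gets $\typeoper(S)=\conductoroper(S)-2\lvert\leftsoper(S)\rvert+1$, hence $\wilfoper(S)=(\embeddingdimensionoper(S)-2)\lvert\leftsoper(S)\rvert-(\typeoper(S)-1)$, and Proposition~\ref{prop:type} would finish the job if one knew $\typeoper(S)\le\embeddingdimensionoper(S)-1$. But that inequality, which you correctly identify as carrying the entire weight of the proof, is exactly where the argument collapses, for two reasons. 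First, the mechanism you propose does not exist: an order-reversing (partial) bijection of $\Aperyoper(S,\multiplicityoper)$ matches maximal elements of the poset with minimal elements \emph{of the whole poset}, and the unique minimal element is $0$. Already in the symmetric case the involution $x\mapsto\Frobeniusoper+\multiplicityoper-x$ sends the unique maximal element $\Frobeniusoper+\multiplicityoper$ to $0$, not to a minimal generator; the generators $\ne\multiplicityoper$ are sent to coatoms, not to atoms. Since for symmetric semigroups $\typeoper=1$ while $\embeddingdimensionoper-1$ is arbitrary, the numbers of maximal and of minimal nonzero elements of the Ap\'ery poset are simply not dual to one another, and no injection of $\PFoper(S)$ into $\primitivesoper(S)\setminus\{\multiplicityoper\}$ falls out of the symmetry of the $f_i$'s.

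Second, and more fatally, the inequality $\typeoper(S)\le\embeddingdimensionoper(S)-1$ is not a theorem for almost symmetric semigroups: it fails in embedding dimension $\ge 5$, and even the one nontrivial case where it does hold, $\embeddingdimensionoper=4$, is precisely Numata's conjecture, proved by Moscariello only in 2016 --- seven years after La Valle's result --- by a genuinely delicate analysis. So this cannot be the intended route, and no amount of polishing of the duality argument will rescue it. Note also that Proposition~\ref{prop:type} combined with the identity $\conductoroper=2\lvert\leftsoper\rvert+\typeoper-1$ yields only $\lvert\leftsoper\rvert\ge 1$, i.e.\ no information, so the type bound really is indispensable to your strategy. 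A correct proof must instead establish $(\embeddingdimensionoper-2)\lvert\leftsoper\rvert\ge\typeoper-1$ directly --- for instance by using the pairing $f\mapsto\Frobeniusoper-f$ on $\PFoper(S)\setminus\{\Frobeniusoper\}$ to manufacture enough left elements, rather than to cap the type by the embedding dimension.
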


As a consequence of Theorem~\ref{th:type<=3} Dobbs and Matthews derived an interesting corollary:
\begin{corollary}[{\cite[Cor~2.7]{DobbsMatthews2006}}]\label{cor:c<4l} 
	If $S$ is a numerical semigroup with $\conductoroper\le 4 \left|\leftsoper\right|$, then $S$ is Wilf.
\end{corollary}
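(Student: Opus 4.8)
The plan is to reduce the statement to Theorem~\ref{th:type<=3} by showing that a numerical semigroup $S$ violating Wilf's conjecture, if one existed, could not satisfy $\conductoroper \le 4\lvert\leftsoper\rvert$ unless its embedding dimension were at least $4$ — and in the range of embedding dimensions $2$ and $3$ we already know $S$ is Wilf. So the only work is the case $\embeddingdimensionoper(S) \ge 4$, where I would argue directly from the definition of the Wilf number. Recall $\wilfoper(S) = \lvert\primitivesoper\rvert\,\lvert\leftsoper\rvert - \conductoroper = \embeddingdimensionoper\,\lvert\leftsoper\rvert - \conductoroper$.

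First I would observe that the primitive element $0$ is not among the minimal generators, but the multiplicity $\multiplicityoper$ always is, and moreover every minimal generator strictly below $\conductoroper$ lies in $\leftsoper$ (together with $0$), so $\primitivesoper \setminus (\primitivesoper\cap\leftsoper)$ consists only of generators in the threshold interval $\{\conductoroper,\dots,\conductoroper+\multiplicityoper-1\}$; this bounds how $\embeddingdimensionoper$ and $\lvert\leftsoper\rvert$ can interact, but for this corollary the cleanest route is simply the crude count: if $\embeddingdimensionoper \ge 4$, then
\[
\wilfoper(S) = \embeddingdimensionoper\,\lvert\leftsoper\rvert - \conductoroper \ge 4\lvert\leftsoper\rvert - \conductoroper \ge 0,
\]
where the last inequality is exactly the hypothesis $\conductoroper \le 4\lvert\leftsoper\rvert$. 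So $S$ is Wilf.

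It then remains to handle $\embeddingdimensionoper(S) \le 3$. If $\embeddingdimensionoper(S) = 1$ then $S = \mathbb{N}$, which is trivially Wilf (here $\conductoroper = 0$, $\wilfoper = 0$). If $\embeddingdimensionoper(S) \in \{2,3\}$, then $S$ is Wilf by Theorem~\ref{th:type<=3} (numerical semigroups of embedding dimension smaller than $4$ are Wilf), with no use of the hypothesis on the conductor needed. Combining the two cases, every numerical semigroup with $\conductoroper \le 4\lvert\leftsoper\rvert$ is Wilf, which is the claim.

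The only subtlety — and thus the ``main obstacle,'' such as it is — is simply noticing that the bound $\embeddingdimensionoper\,\lvert\leftsoper\rvert - \conductoroper \ge 4\lvert\leftsoper\rvert - \conductoroper$ requires $\embeddingdimensionoper \ge 4$, so that the low-embedding-dimension cases must be peeled off and dispatched separately; once one realizes those cases are covered unconditionally by Theorem~\ref{th:type<=3}, the argument is immediate. (One could phrase the whole thing uniformly by noting that $\wilfoper$ being an increasing function of $\embeddingdimensionoper$ for fixed $\lvert\leftsoper\rvert$ and $\conductoroper$ means the hypothesis gives the result as soon as $\embeddingdimensionoper$ reaches the threshold value $4$, and for smaller $\embeddingdimensionoper$ the previously established theorem applies.)
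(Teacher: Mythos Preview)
Your argument is correct and is exactly the derivation the paper has in mind: it presents the corollary simply as ``a consequence of Theorem~\ref{th:type<=3}'', and your case split (for $\embeddingdimensionoper\ge 4$ use $\wilfoper(S)=\embeddingdimensionoper\,\lvert\leftsoper\rvert-\conductoroper\ge 4\lvert\leftsoper\rvert-\conductoroper\ge 0$, for $\embeddingdimensionoper\le 3$ invoke Theorem~\ref{th:type<=3}) is precisely how that consequence is obtained. One minor wording slip: $0$ is not a primitive element (the primitives \emph{are} the minimal generators), but this aside plays no role in your actual proof.
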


Also making use of Proposition~\ref{prop:type}, Kunz~\cite{Kunz2016SF-type} obtained the following result (for $p$ and $q$ coprime). See also Kunz and Waldi~\cite{KunzWaldi2017JA-deviation} for some other generalizations.

\begin{proposition}[{\cite[Cor.~3.1]{KunzWaldi2017JA-deviation}}]
	Let $S$ be a numerical semigroup with $\embeddingdimensionoper(S)\ge 3$. Let $p$ and $q$ be two distinct primitives of $S$.
	If $g+h\in (p+S)\cup(q+S)$, for any (non necessarily distinct) primitives $g$ and $h$ of $S$, then $\typeoper(S)\le\embeddingdimensionoper(S) - 1$. In particular, $S$ is Wilf.
\end{proposition}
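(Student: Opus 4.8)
The plan is to reduce the statement to the inequality $\typeoper(S)\le\embeddingdimensionoper(S)-1$. Granting this, Proposition~\ref{prop:type} immediately gives $\conductoroper(S)\le(\typeoper(S)+1)\lvert\leftsoper(S)\rvert\le\embeddingdimensionoper(S)\lvert\leftsoper(S)\rvert=\lvert\primitivesoper(S)\rvert\lvert\leftsoper(S)\rvert$, so that $\wilfoper(S)\ge 0$ and $S$ is Wilf. Thus the whole problem is to bound the number of pseudo-Frobenius numbers.

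First I would recast the hypothesis. Writing $x\le_S y$ to mean $y-x\in S$, I claim that ``$g+h\in(p+S)\cup(q+S)$ for all primitives $g,h$'' is equivalent to saying that the only $x\in S$ with $x-p\notin S$ and $x-q\notin S$ are $x=0$ and the minimal generators other than $p$ and $q$. Indeed, if $x\in S$ is neither $0$ nor a minimal generator, then $x=g+h+r$ with $g,h$ primitive and $r\in S$, and the hypothesis gives $g+h\in(p+S)\cup(q+S)$, hence $x\in(p+S)\cup(q+S)$; conversely $g+h$ is never a minimal generator, so the recast statement applied to $x=g+h$ returns the hypothesis.

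Next comes the combinatorial core, carried out inside $A:=\Aperyoper(S,p)$. I would use the standard facts that $\typeoper(S)$ equals the number of $\le_S$-maximal elements of $A$, that these maximal elements are precisely the $f+p$ with $f\in\PFoper(S)$, and that $w\in A$, $a\in S$, $w-a\in S$ force $w-a\in A$ (otherwise $w-p=a+\bigl((w-a)-p\bigr)\in S$). Now let $w$ be a $\le_S$-maximal element of $A$, and let $j\ge 0$ be the largest integer with $w-jq\in S$; set $s:=w-jq$. Then $s\in A$ and $s-p\notin S$ (else $w-p=jq+(s-p)\in S$), while $s-q\notin S$ by maximality of $j$; the recast hypothesis therefore forces $s=0$ or $s$ to be a minimal generator other than $p,q$. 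Hence every maximal element of $A$ is either a ``$q$-power'' $jq$ or has the form $jq+b$ with $b\in\primitivesoper(S)\setminus\{p,q\}$, with $j$ uniquely determined by $w$. The count then closes: the $q$-powers among the maximal elements form a $\le_S$-chain, so at most one occurs; and for each fixed $b\in\primitivesoper(S)\setminus\{p,q\}$, two maximal elements $j_1q+b$ and $j_2q+b$ with $j_1<j_2$ would be $\le_S$-comparable, which distinct maximal elements cannot be, so at most one maximal element carries that $b$. Therefore $\typeoper(S)\le 1+\lvert\primitivesoper(S)\setminus\{p,q\}\rvert=1+(\embeddingdimensionoper(S)-2)=\embeddingdimensionoper(S)-1$, as required.

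I expect the crux to be the core step: recognizing that the hypothesis says exactly that $0$ and the ``extra'' minimal generators are the only elements of $S$ dominating neither $p$ nor $q$, and then feeding this back recursively into the maximal elements of $\Aperyoper(S,p)$ to force the normal form $jq+(\text{generator or }0)$. A direct attempt to inject $\PFoper(S)$ into $\primitivesoper(S)\setminus\{p\}$ (sending $f$ to some minimal generator dividing $f+p$ in $S$) is what one tries first, but the required injectivity is not available without this extra structure; the uniform normal form is what makes the bookkeeping go through.
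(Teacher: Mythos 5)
Your proof is correct. Note that the paper itself gives no proof of this proposition --- it is a survey item cited from Kunz and Kunz--Waldi, with only the remark that the argument ``makes use of Proposition~\ref{prop:type}''; your reduction (prove $\typeoper(S)\le\embeddingdimensionoper(S)-1$, then conclude $\conductoroper\le\embeddingdimensionoper\lvert\leftsoper\rvert=\lvert\primitivesoper\rvert\lvert\leftsoper\rvert$) is exactly the route the paper indicates, and you supply the missing half. Your recasting of the hypothesis as a description of $S\setminus\bigl((p+S)\cup(q+S)\bigr)$ is an honest equivalence, and the Ap\'ery-set argument is sound: the standard facts about $\Aperyoper(S,p)$ (its $\le_S$-maximal elements are the $f+p$ with $f\in\PFoper(S)$, and it is closed under subtracting elements of $S$ when the difference stays in $S$) are correctly invoked for the primitive $p$ rather than the multiplicity, which is fine. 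The one step a reader should pause on is the final count: what you actually need is that $w\mapsto s(w)=w-j(w)q$ injects the maximal elements of $\Aperyoper(S,p)$ into $\{0\}\cup\bigl(\primitivesoper(S)\setminus\{p,q\}\bigr)$, and your observation that two distinct maximal elements with the same $s$-value would differ by a multiple of $q$ and hence be $\le_S$-comparable does deliver exactly that. So the bound $\typeoper(S)\le 1+(\embeddingdimensionoper(S)-2)$ stands and the proof is complete.
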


A semigroup generated by a generalized arithmetic sequence is a semigroup of the form $S = \langle m, hm + d, hm + 2d, \ldots, hm + \ell d\rangle$, where $m, d, h, \ell$ are positive integers such that $m\ge 2$, $\gcd(m,d) = 1$ and $\ell \le m-2$. Note that $m$ and $d$ being coprime ensures that $S$ is a numerical semigroup.
For a picture made up from a semigroup generated by a generalized arithmetic sequence with $m= 20, d=9, h=2, \ell=8$, see Figure~\ref{fig:GAS_m20_h4_d9_l6}.
By using a result of Matthews~\cite[Cor.~3.4]{Matthews2005I-integers} that computes the type of a numerical semigroup generated by a generalized arithmetic sequence, Sammartano observed the following:
\begin{proposition}[{\cite[Prop.~20]{Sammartano2012SF-Numerical}}]\label{wilf-particular:generalized-arithmetic}
	Numerical semigroups generated by generalized arithmetic sequences are Wilf.
\end{proposition}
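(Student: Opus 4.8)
The plan is to prove that every numerical semigroup $S$ generated by a generalized arithmetic sequence satisfies $\typeoper(S)\le\embeddingdimensionoper(S)-1$; Wilf's inequality is then immediate from Proposition~\ref{prop:type}, since $\conductoroper(S)\le(\typeoper(S)+1)\lvert\leftsoper(S)\rvert\le\embeddingdimensionoper(S)\lvert\leftsoper(S)\rvert=\lvert\primitivesoper(S)\rvert\lvert\leftsoper(S)\rvert$, that is, $\wilfoper(S)\ge 0$. Write $S=\langle m,\,hm+d,\,hm+2d,\ldots,hm+\ell d\rangle$ as in the statement. A preliminary point is that $\embeddingdimensionoper(S)=\ell+1$: as $\gcd(m,d)=1$ and $\ell<m$ the residues $0,d,\ldots,\ell d$ are pairwise distinct modulo $m$, and no listed generator is redundant, since $m$ is the multiplicity and a representation of $hm+id$ ($1\le i\le\ell$) using the remaining generators would be either a proper multiple of $m$ --- impossible in the residue class of $id$ --- or at least $hm+d$ and equal to $hm+id$ only if it were a single generator $hm+jd$ with $j\ne i$, which cannot be. So it only remains to bound $\typeoper(S)$.

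For the type I would invoke Matthews' computation~\cite[Cor.~3.4]{Matthews2005I-integers}, obtained by analysing the Apéry set $\Aperyoper(S,\multiplicityoper)$; recall that $\typeoper(S)$ equals the number of maximal elements of $\Aperyoper(S,\multiplicityoper)$ for the partial order $x\preceq y\iff y-x\in S$. The elements of $\Aperyoper(S,\multiplicityoper)$ are in bijection with the residue classes modulo $m$, hence also with the minimal $d$-weights $b\in\{0,1,\ldots,m-1\}$, and the Apéry element $w_b$ of $d$-weight $b$ equals $w_b=h\lceil b/\ell\rceil\,m+b d$, because realizing a prescribed $d$-weight $b$ needs exactly $\lceil b/\ell\rceil$ of the generators $hm+id$ and extra copies of $m$ only increase the value. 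A short computation then shows that, for $b\ne b'$, one has $w_{b'}-w_b\in S$ exactly when $b<b'$ and $\lceil b'/\ell\rceil-\lceil b/\ell\rceil\ge\lceil(b'-b)/\ell\rceil$; since the reverse inequality between these ceilings always holds, this is an equality condition in $\ell,b,b'$ alone. Tracing through it, the maximal elements turn out to be precisely the $w_b$ with $b$ in the top block $\{(q-1)\ell+1,\ldots,m-1\}$, where $q=\lceil(m-1)/\ell\rceil$, so that $\typeoper(S)=(m-1)-(q-1)\ell$. Since $(q-1)\ell<m-1\le q\ell$, this quantity lies between $1$ and $\ell$ (in fact it equals $((m-2)\bmod\ell)+1$), whence $\typeoper(S)\le\ell=\embeddingdimensionoper(S)-1$, as needed.

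The delicate step is the identification of the maximal elements of $\Aperyoper(S,\multiplicityoper)$, and it splits into two checks. First, no two candidates are comparable: for $(q-1)\ell<b<b'\le m-1$ the ceilings agree, so $w_{b'}-w_b=(b'-b)d$ with $1\le b'-b\le\ell-1$, and such a multiple of $d$ cannot lie in $S$ --- if it did, it would be either a nonzero multiple of $m$ (impossible, since $0<b'-b<m$) or at least $hm+(b'-b)d>(b'-b)d$. Second, every remaining $w_b$ is dominated by a candidate: given a weight $b\le(q-1)\ell$ one must produce $b'$ in the top block with $b<b'$ and $\lceil b'/\ell\rceil=\lceil b/\ell\rceil+\lceil(b'-b)/\ell\rceil$, which reduces to showing that two explicitly described intervals of admissible values of $b'-b$ overlap, and this uses only $(q-1)\ell<m-1\le q\ell$. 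This bookkeeping with ceiling functions is where I expect slips to be easiest to make; alternatively one may simply quote Matthews' closed formula for $\typeoper(S)$ --- which, remarkably, depends only on $m$ and $\ell$ --- and read off $\typeoper(S)\le\ell$ at once, which is the route Sammartano takes.
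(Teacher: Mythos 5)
Your proposal is correct and follows exactly the route the paper attributes to Sammartano: bound $\typeoper(S)\le\embeddingdimensionoper(S)-1$ using Matthews' computation of the type and conclude with Proposition~\ref{prop:type}. The only difference is that you rederive Matthews' formula rather than cite it, and your derivation (Apéry elements $w_b=h\lceil b/\ell\rceil m+bd$, maximal ones indexed by the top block $\{(q-1)\ell+1,\ldots,m-1\}$, giving $\typeoper(S)=m-1-(q-1)\ell\le\ell$) checks out.
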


\subsection{Semigroups given by sets of generators}\label{sec:sgps-given-sets-generators}

Let $G$ be an abelian group. Let $A\subseteq G$ be a non empty finite subset and let $h$ be a positive integer. (For the purpose of this paper the reader may think of the group as being a cyclic group $\mathbb{Z}/m$ and take $h=3$; for more details, see~\cite[Chap.~4]{TaoVu2006Book-Additive}.)

The set $A$ is said to be a \emph{$B_h$ set} if, for all $a_1,\ldots, a_h,b_1,\ldots, b_h\in A$, the equality
\[a_1 + \cdots + a_h = b_1 + \cdots + b_h\]
holds if and only if $(a_1,\ldots, a_h)$ is a permutation of $(b_1,\ldots, b_h)$. 

Let $m,a,b,n\in\mathbb{N}_{>0}$ be such that $n\ge 3$ and 
\[(3m+1)/2\le a< b \le (5m-1)/3.\]

Let $A\subseteq \{a,\ldots,b\}$ be such that $\left| A \right|=n-1$ and $A$ induces a $B_3$ set in $\mathbb{Z}/m$. That such a set exists follows from~\cite[Proposition 3.1]{EliahouFromentin2018SF-misses}. Finally, let 
\[S=\langle \{m\}\cup A\rangle_{4m}.\]

Eliahou and Fromentin proved the following result:
\begin{proposition}[{\cite[Th.~4.1]{EliahouFromentin2018SF-misses}}]\label{prop:misses}
	Let $S=\langle \{m\}\cup A\rangle_{4m}$ be a semigroup as constructed above. Then
$\wilfoper(S)\ge 9$, in particular $S$ is a Wilf semigroup.
\end{proposition}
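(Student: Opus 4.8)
The plan is to describe $\leftsoper(S)$ and $\primitivesoper(S)$ explicitly in terms of $m$, $n$ and the sumsets of $A$, and then substitute into the definition~\eqref{eq:wilf-number} of the Wilf number. I would first locate the conductor. The hypotheses $(3m+1)/2\le a<b\le(5m-1)/3$ are calibrated so that $A\subseteq(m,2m)$, $m+A\subseteq(2m,3m)$, and $2m+A$ and $A+A$ both lie in $(3m,4m)$; moreover the inequality $(7m+1)/2>(10m-2)/3$ forces $A+A\subseteq[2a,2b]$ to lie strictly below $2m+A\subseteq[2m+a,2m+b]$. Every other sum of generators from $\{m\}\cup A$ is at least $4m$, so
\[
S\cap[0,4m)=\{0,m,2m,3m\}\sqcup A\sqcup(m+A)\sqcup(2m+A)\sqcup(A+A),
\]
a disjoint union. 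In particular $4m-1\notin S$, so $\conductoroper(S)=4m$, $\qoper(S)=4$, and the threshold interval is $\{4m,\dots,5m-1\}$. Being a $B_3$ set in $\mathbb{Z}/m$, the set $A$ is in particular a Sidon set (adjoin a fixed element of $A$ to a putative relation $a_1+a_2=b_1+b_2$), and since $A+A$ lies in an interval of length $2(b-a)<m$, reduction modulo $m$ is injective there; hence $|A+A|=\binom{n}{2}$ and
\[
|\leftsoper(S)|=4+3(n-1)+\binom{n}{2}=1+3n+\binom{n}{2}.
\]

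Next I would compute $\primitivesoper(S)$. Each element of $A$ is a minimal generator, since the smallest sum of two positive elements of $S$ is $2m>b\ge\max A$; together with $m$ these are all the minimal generators below $4m$. No integer of $[4m,5m)$ decomposes with a summand $\ge 4m$, so such an integer fails to be primitive exactly when it is a sum of two elements of $S\cap[m,4m)$; enumerating these two-term sums from the pieces above and using the precise bounds on $a,b$ to discard the ones that fall outside $[4m,5m)$, one finds that the non-primitive elements of $[4m,5m)$ are precisely those of
\[
U=\{4m\}\cup(3m+A)\cup(m+A+A)\cup(A+A+A),
\]
where $3m+A$ is disjoint from $m+A+A$ because $(9m+1)/2>(13m-2)/3$. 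Since $A$ is a $B_3$ set and $A+A+A$ sits in an interval of length $3(b-a)<m$, its triple-sums are distinct integers, so $|A+A+A|=\binom{n+1}{3}$; thus $|U|\le 1+(n-1)+\binom{n}{2}+\binom{n+1}{3}$, and as $\primitivesoper(S)=\{m\}\sqcup A\sqcup\bigl([4m,5m)\setminus U\bigr)$,
\[
|\primitivesoper(S)|\ge m-\binom{n}{2}-\binom{n+1}{3}.
\]

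It remains to bound $m$ from below. The $\binom{n+1}{3}$ triple-sums of $A$ are distinct integers lying in an interval of length $3(b-a)\le(m-5)/2$, so $m\ge 3+2\binom{n+1}{3}$. Feeding the three estimates into $\wilfoper(S)=|\primitivesoper(S)|\,|\leftsoper(S)|-\conductoroper(S)$ gives
\[
\wilfoper(S)\ge m\bigl(|\leftsoper(S)|-4\bigr)-\Bigl(\binom{n}{2}+\binom{n+1}{3}\Bigr)|\leftsoper(S)|,
\]
which is increasing in $m$ since $|\leftsoper(S)|-4\ge 9$; so it is enough to verify $\wilfoper(S)\ge 9$ at the smallest admissible $m=3+2\binom{n+1}{3}$, a routine polynomial estimate that holds with room to spare for every $n\ge4$ (there $m$ is of order $\tfrac13 n^3$ while $\binom{n}{2}+\binom{n+1}{3}$ is only of order $\tfrac16 n^3$). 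For $n=3$ the set $\{a,\dots,b\}$ must contain $|A|=2$ integers, which forces $m\ge 11$ and excludes $m=12$; for $m\ge 13$ the displayed bound gives $\wilfoper(S)\ge 9m-91\ge 26$, and in the remaining case $m=11$ one necessarily has $A=\{17,18\}$, where $3\cdot17=33+18$ yields $(3m+A)\cap(A+A+A)\ne\emptyset$, hence $|\primitivesoper(S)|\ge 5$ and $\wilfoper(S)\ge 5\cdot 13-44=21$.

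The main obstacle is the threshold analysis in the second step: pinning down $U$ exactly requires checking every way of writing an integer of $[4m,5m)$ as a sum of two smaller elements of $S$ and invoking each of the inequalities $(3m+1)/2\le a<b\le(5m-1)/3$ to eliminate the unwanted combinations — the same inequalities that yield the disjointness statements used to count $\leftsoper(S)$. Once that bookkeeping is done, the cardinality counts (Sidon and $B_3$), the lower bound $m\ge 3+2\binom{n+1}{3}$, and the final arithmetic are all routine, with only the corner $n=3$, $m=11$ needing separate attention.
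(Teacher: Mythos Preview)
The survey itself does not prove this proposition; it merely quotes it from Eliahou--Fromentin. Your argument is correct and is precisely the kind of explicit count the survey alludes to in Remark~\ref{rem:non-eliahou}: use the inequalities $(3m+1)/2\le a<b\le(5m-1)/3$ to place $A$, $m+A$, $2m+A$, $A+A$ disjointly in $[0,4m)$, count $\lvert\leftsoper(S)\rvert=1+3n+\binom{n}{2}$ via the Sidon property, identify the non-primitives in the threshold interval as $U=\{4m\}\cup(3m+A)\cup(m+A+A)\cup(A+A+A)$, bound $\lvert U\rvert$ via the $B_3$ property, and feed the pigeonhole bound $m\ge 3+2\binom{n+1}{3}$ into the resulting expression for $\wilfoper(S)$. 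This is essentially the approach of the cited paper.

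The one delicate point is exactly where you flag it: for $n=3$ the generic estimate $\wilfoper(S)\ge 9m-91$ gives only $8$ at $m=11$, and you correctly observe that in the unique instance $A=\{17,18\}$ the coincidence $3\cdot 17=3\cdot 11+18$ forces an overlap in $U$, recovering $\lvert\primitivesoper(S)\rvert\ge 5$ and hence $\wilfoper(S)\ge 21$. Your treatment of $m=12$ (impossible since then $a=b$) and $m\ge 13$ is also correct.
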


Let $p$ be an even positive integer, let $\mu=\mu(p) = \frac{p^2}{4}+2p+2$ and let $\gamma=\gamma(p) = 2\mu(p) - \left(\frac{p}{2} +4\right)$. The following holds:

\begin{proposition}[{\cite[Prop.6]{Delgado2018MZ-question}}]\label{prop:S(q)}
	Let $S=S(p)=\langle \mu,\gamma,\gamma+1\rangle_{p\mu}.$ Then
$\wilfoper(S)>0$, in particular, $S$ is a Wilf semigroup.
\end{proposition}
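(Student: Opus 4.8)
The plan is to compute $\multiplicityoper(S)$, $\conductoroper(S)$, $\lvert\leftsoper(S)\rvert$ and $\lvert\primitivesoper(S)\rvert$ explicitly as polynomials in $p$ and then simply evaluate $\wilfoper(S)$. Throughout write $k=p/2$, so $\mu=k^{2}+4k+2$ and $\gamma=2\mu-(k+4)$. First I would settle the easy invariants. For $p$ even and positive one checks the chain $\mu<\gamma<\gamma+1<2\mu<p\mu$, whence $\multiplicityoper(S)=\mu$, the depth is $\qoper(S)=\lceil p\mu/\mu\rceil=p$, and the threshold interval is $[p\mu,(p+1)\mu)$. By construction $\conductoroper(S)\le p\mu$; for equality one notes that $p\mu-(k+3)=(p-2)\mu+(\gamma+1)\in S$ while none of $p\mu-(k+2),\dots,p\mu-1$ lies in $S$, so $\conductoroper(S)=p\mu$.

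The heart of the proof is an explicit description of $\leftsoper(S)$ and $\primitivesoper(S)$, resting on the following bookkeeping. Every element of $\langle\mu,\gamma,\gamma+1\rangle$ that is smaller than $(p+1)\mu$ can be written as $n\mu-D(j,c)$ with $D(j,c):=j(k+4)-c$, where $j\ge 0$, $0\le c\le j$ and $n\ge 2j$; the key numerical point is that in the ranges that occur one has $0\le D(j,c)<\mu$, so $n\mu-D(j,c)$ sits in the fixed block $\bigl((n-1)\mu,\,n\mu\bigr]$ and distinct admissible pairs $(j,c)$ give distinct integers (since $j\mapsto D(j,0)$ grows in steps larger than $j$). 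It follows that $\leftsoper(S)$ is the disjoint union of $\{0,\mu,2\mu,\dots,(p-1)\mu\}$ with the points $n\mu-D(j,c)$ for $1\le j\le k$, $2j\le n\le p$, $0\le c\le j$; summing $(p-2j+1)(j+1)$ over $j$ gives $\lvert\leftsoper(S)\rvert=\tfrac16\bigl(2k^{3}+9k^{2}+13k\bigr)$.

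For the primitives I would argue in three stages. (i) No element of $S$ with value $\ge(p+1)\mu$ is primitive, since subtracting $\mu$ stays in $S$; hence every primitive lies below $(p+1)\mu$. (ii) Among the left elements the only primitives are $\mu$, $\gamma$, $\gamma+1$: the multiples $k'\mu$ with $k'\ge 2$ are obviously non-primitive, and for a left element $\ell=n\mu-D(j,c)$ with $j\ge 1$ and $n\ge 3$ one of $\ell-\gamma$, $\ell-(\gamma+1)$ is again a left element, giving a nontrivial decomposition of $\ell$; the pairs $(j,n)=(1,2)$ yield precisely $\gamma,\gamma+1$. (iii) Therefore the remaining primitives all lie in the threshold interval, and for $x\in[p\mu,(p+1)\mu)$ one has $x$ non-primitive $\iff$ $x\in\leftsoper^{*}+\leftsoper^{*}$ (with $\leftsoper^{*}$ the nonzero left elements), because any nontrivial splitting of such an $x$ must have a summand below $\conductoroper(S)$. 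So it remains to identify $\bigl(\leftsoper^{*}+\leftsoper^{*}\bigr)\cap[p\mu,(p+1)\mu)$: it consists of $p\mu$; the points $(p+1)\mu-D(j,c)$ for $1\le j\le k$ (exactly the members of $\langle\mu,\gamma,\gamma+1\rangle$ in the interval); and — arising from sums of two ``extremal'' left elements $2j_{i}\mu-D(j_{i},c_{i})$ with $j_{1}+j_{2}=k+1$ — the points $(p+2)\mu-D(k+1,c)$ for $0\le c\le k+1$. Checking these three families are pairwise disjoint and counting, the number of primitives in the threshold interval is $\mu-\bigl(1+\sum_{j=1}^{k}(j+1)+(k+2)\bigr)=\tfrac12(k^{2}+3k-2)$, so $\lvert\primitivesoper(S)\rvert=\tfrac12(k^{2}+3k+4)$.

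Finally, substituting into $\wilfoper(S)=\lvert\primitivesoper(S)\rvert\,\lvert\leftsoper(S)\rvert-\conductoroper(S)$ with $\conductoroper(S)=p\mu=2k^{3}+8k^{2}+4k$ and simplifying gives $\wilfoper(S)=\tfrac{k}{12}\bigl(2k^{4}+15k^{3}+24k^{2}-21k+4\bigr)$; since $24k^{2}-21k+4\ge 3k+4>0$ for every integer $k\ge 1$, the quartic factor is positive, so $\wilfoper(S)>0$ for all even $p\ge 2$. The main obstacle is step (iii): pinning down exactly which integers of the threshold interval are sums of two nonzero left elements — the ``wrap-around'' family $(p+2)\mu-D(k+1,c)$, which appears only when the two subtracted defects add to something strictly between $\mu$ and $2\mu$, is easy to overlook — and then verifying that the three families neither overlap nor leave anything out, so that the cardinalities combine cleanly.
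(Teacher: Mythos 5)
Your proposal is correct: I checked the key counts against $p=2$ ($S=\langle 7,9,10\rangle_{14}$, $|\leftsoper|=4$, $|\primitivesoper|=4$, $\wilfoper=2$) and $p=4$ ($S=\langle 14,22,23\rangle_{56}$, $|\leftsoper|=13$, $|\primitivesoper|=7$, $\wilfoper=35$), and your general formulas, the classification of the decomposable elements of the threshold interval into the three families (including the ``wrap-around'' family with total defect $D(k+1,c)$, forced by $n_1+n_2\ge 2(j_1+j_2)$), and the final positivity argument all hold up. The survey itself gives no proof and only cites \cite[Prop.~6]{Delgado2018MZ-question}, whose argument is, as Remark~\ref{rem:non-eliahou} indicates, exactly this kind of explicit counting of $\leftsoper$ and of the primitives in the threshold interval, so your route is essentially the same as the source's.
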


\begin{example}\label{ex:fromentin1}
	Figure~\ref{fig:fromentin1} is a pictorial representation of the numerical semigroup $S(4)$.
	Note that $S(4)$ is of the form $\langle \{m\}\cup A\rangle_{4m}$, with $m=14$ and $A=\{22,23\}$.
	\begin{figure}[h]
		\begin{center}
			\includegraphics[scale=0.8]{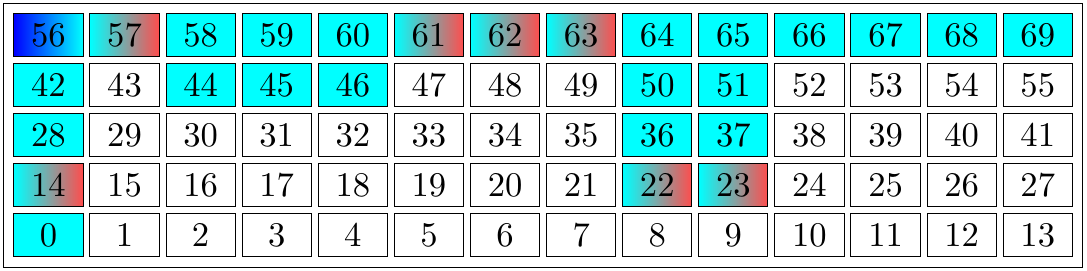}
		\end{center}
		\caption{Pictorial representation of $S(4)=\langle 14,22,23 \rangle_{56}$. \label{fig:fromentin1}}
	\end{figure} 
\end{example}

To end this subsection I would like to make the following observations:
\begin{remark}\label{rem:non-eliahou}
	The numerical semigroups $S=\langle \{m\}\cup A\rangle_{4m}$ and $S(p)=\langle \mu,\gamma,\gamma+1\rangle_{p\mu}$ defined above have (possibly large) negative Eliahou numbers (see \cite{Delgado2018MZ-question,EliahouFromentin2018SF-misses}). The proof that they are Wilf involves explicit counting.
\end{remark}
\begin{remark}
	The semigroups $\langle \{m\}\cup A\rangle_{4m}$ have depth $4$, while, since the conductor of $S(p)$ is $p\mu$,  there is no bound for the depths of the  semigroups $S(p)$.   
\end{remark}
\begin{remark}
	Several other families obtained using similar constructions to the one in Proposition~\ref{prop:S(q)} can be found in the same paper. In particular, for any given integer $n$,  an infinite family of numerical semigroups with Eliahou number equal to $n$ is obtained.
	All these families consist entirely of Wilf semigroups.
\end{remark}
   
\subsection{Semigroups with nonnegative Eliahou numbers}

Recall that the Eliahou number of a numerical semigroup was introduced in page~\pageref{eq:eliahou-number}. The following result, which states that semigroups with nonnegative Eliahou number are Wilf, appears in~\cite[Prop.~3.11]{Eliahou2018JEMS-Wilfs} (see also~\cite[Cor.~2.3]{EliahouFromentin2018SF-misses}).

\begin{proposition}\label{prop:eliahou-implies-wilf}
	Let $S$ be a numerical semigroup with $\eliahouoper(S)\ge 0$. Then $\wilfoper(S)\ge~0$.
\end{proposition}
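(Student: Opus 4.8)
The plan is to prove the stronger inequality $\wilfoper(S)\ge \eliahouoper(S)$, which gives the proposition at once. Everything rests on the exact identity
\[
\wilfoper(S)-\eliahouoper(S) \;=\; y\,\bigl(\lvert\leftsoper(S)\rvert-\qoper(S)\bigr),
\]
where $y$ denotes the number of primitive elements of $S$ lying in the threshold interval $\{\conductoroper,\dots,\conductoroper+\multiplicityoper-1\}$, combined with the two elementary facts $y\ge 0$ and $\lvert\leftsoper(S)\rvert\ge\qoper(S)$.

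First I would pin down how the primitives are distributed relative to the threshold interval. We may assume $S\neq\mathbb{N}$ (the statement being clear otherwise), so that $\multiplicityoper\ge 2$ and $\conductoroper\ge\multiplicityoper$. On one hand every primitive is $\ge\multiplicityoper$; on the other, no primitive exceeds $\conductoroper+\multiplicityoper-1$, because if $x\in\primitivesoper(S)$ with $x\ge\conductoroper+\multiplicityoper$ then $x-\multiplicityoper\ge\conductoroper$, hence $x-\multiplicityoper\in S\setminus\{0\}$ and $x=\multiplicityoper+(x-\multiplicityoper)$ is not a minimal generator. Therefore $\primitivesoper(S)$ is the disjoint union of $\primitivesoper(S)\cap\leftsoper(S)$ and $\primitivesoper(S)\cap\{\conductoroper,\dots,\conductoroper+\multiplicityoper-1\}$; writing $y$ for the cardinality of the second piece, this yields $\lvert\primitivesoper(S)\rvert=\lvert\primitivesoper(S)\cap\leftsoper(S)\rvert+y$, and, recalling that $D$ is the set of non-primitive elements of the threshold interval (which has $\multiplicityoper$ elements), $\lvert D\rvert=\multiplicityoper-y$.

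Next I would observe that $\lvert\leftsoper(S)\rvert\ge\qoper(S)$: since $\qoper=\lceil\conductoroper/\multiplicityoper\rceil$ we have $(\qoper-1)\multiplicityoper<\conductoroper$, so the $\qoper$ multiples $0,\multiplicityoper,2\multiplicityoper,\dots,(\qoper-1)\multiplicityoper$ all lie below $\conductoroper$ and hence belong to $\leftsoper(S)$. Then I would simply substitute: inserting $\lvert\primitivesoper(S)\rvert=\lvert\primitivesoper(S)\cap\leftsoper(S)\rvert+y$ into the definition of $\wilfoper(S)$ and $\lvert D\rvert=\multiplicityoper-y$ into the definition of $\eliahouoper(S)$ (whereupon the $\qoper\multiplicityoper$ contribution cancels), the two $\conductoroper$ terms and the two $\lvert\primitivesoper(S)\cap\leftsoper(S)\rvert\,\lvert\leftsoper(S)\rvert$ terms cancel on subtraction, leaving precisely $y\,(\lvert\leftsoper(S)\rvert-\qoper(S))$. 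Since $y\ge 0$ and $\lvert\leftsoper(S)\rvert-\qoper(S)\ge 0$, we get $\wilfoper(S)\ge\eliahouoper(S)\ge 0$.

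I do not expect a genuine obstacle: the argument is an algebraic identity underpinned by two one-line observations. The only points requiring care are the bookkeeping in the decomposition of $\primitivesoper(S)$ — making sure nothing is double counted and that primitives truly cannot lie beyond the threshold interval — and the harmless exclusion of $S=\mathbb{N}$, for which $\conductoroper=0$ and the step "$x-\multiplicityoper\neq 0$" in the argument above would otherwise fail.
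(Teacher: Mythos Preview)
Your argument is correct: the identity $\wilfoper(S)-\eliahouoper(S)=y\bigl(\lvert\leftsoper(S)\rvert-\qoper(S)\bigr)$ together with $y\ge 0$ and $\lvert\leftsoper(S)\rvert\ge\qoper(S)$ yields $\wilfoper(S)\ge\eliahouoper(S)$, and the two supporting observations are verified exactly as you say. This survey does not supply its own proof but defers to \cite[Prop.~3.11]{Eliahou2018JEMS-Wilfs} and \cite[Cor.~2.3]{EliahouFromentin2018SF-misses}; the argument there is precisely the one you have reconstructed, so your approach coincides with the original.

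One small caveat worth flagging for clarity rather than correctness: the text surrounding Equation~(\ref{eq:eliahou-number}) in this paper contains a typo in the displayed definition of $D$ (the formula reads $\primitivesoper(S)\cap\{\conductoroper,\dots,\conductoroper+\multiplicityoper-1\}$ while the prose says ``non primitives in the threshold interval''). You have, correctly, used the intended meaning --- $D$ consists of the \emph{non}-primitive elements of the threshold interval, so that $\lvert D\rvert=\multiplicityoper-y$ --- and this is indeed what makes the identity work and what matches Eliahou's original definition. Your handling of the edge case $S=\mathbb{N}$ is also appropriate.
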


This result is similar to Proposition~\ref{prop:type} in the sense that in order to prove that a numerical semigroup is Wilf it suffices to prove that it has nonnegative Eliahou number. The main consequences are referred to in Section~\ref{sec:large-mult}.

While waiting for those consequences, let me refer a result obtained by Eliahou and Marín-Aragón. As they observed, the number $12$ that appears in the statement is the best that can be obtained in this way: Example~\ref{ex:fromentin1} gives a counter example for $\left|\leftsoper\right|= 13$.

\begin{proposition}[\cite{EliahouMarin-Aragon2019}]\label{prop:L<=12}
	If $S$ is a numerical semigroup with $\left|\leftsoper\right|\le 12$, then $S$ has nonnegative Eliahou number and therefore is Wilf. 
\end{proposition}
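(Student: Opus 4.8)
The plan is to invoke Proposition~\ref{prop:eliahou-implies-wilf}, reducing the task to proving $\eliahouoper(S)\ge 0$ whenever $\lvert\leftsoper\rvert\le 12$. Write $m=\multiplicityoper(S)$, $c=\conductoroper(S)$, $q=\qoper(S)$, let $a=\lvert\primitivesoper\cap\leftsoper\rvert$ be the number of primitive left elements, and let $d=\lvert D\rvert$ be the number of non-primitive elements of the threshold interval $\{c,\dots,c+m-1\}$. Since $c\le mq$, the integer $\rhooper:=mq-c$ is nonnegative, and Equation~(\ref{eq:eliahou-number}) becomes
\[\eliahouoper(S)=a\,\lvert\leftsoper\rvert-q\,d+\rhooper .\]
So the goal is the inequality $q\,d\le a\,\lvert\leftsoper\rvert+\rhooper$; since $\rhooper\ge 0$, I would first aim for the cleaner bound $q\,d\le a\,\lvert\leftsoper\rvert$ and keep the term $\rhooper$ in reserve for the borderline configurations.

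Next I would collect three elementary facts. (i) The depth is automatically small: $0,m,2m,\dots,(q-1)m$ all lie in $S$ and are all $<c$ (since $(q-1)m<c$), so $\lvert\leftsoper\rvert\ge q$ and hence $q\le\lvert\leftsoper\rvert\le 12$. (ii) Every element of $D$ is a sum of two nonzero left elements: if $t\in\{c,\dots,c+m-1\}$ is not a minimal generator, then $t=s_1+s_2$ with $s_1,s_2\in S\setminus\{0\}$, and as $s_1,s_2\ge m$ each of them is $\le t-m\le c-1$, hence lies in $\leftsoper\setminus\{0\}$. (iii) Therefore $d\le\binom{\lvert\leftsoper\rvert}{2}$. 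Combined with $a\le\lvert\leftsoper\rvert-1$, these bounds show that for $\lvert\leftsoper\rvert\le 12$ only finitely many quadruples $(\lvert\leftsoper\rvert,q,a,d)$ can arise.

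The proof would then finish with a finite verification. One determines which of these finitely many quadruples are genuinely realised by a numerical semigroup — here the Apéry-set / Kunz-coordinate description enters, since, writing $w_j$ for the least element of $S$ in the residue class $j$ modulo $m$ and $k_j=(w_j-j)/m$, the membership of each residue class in $\leftsoper$ and in $D$ is governed by the $k_j$, and one may also prune using $c\ge 2\lvert\leftsoper\rvert$ and $c\le(\typeoper(S)+1)\lvert\leftsoper\rvert$ from Section~\ref{sec:notation} and Proposition~\ref{prop:type}. Then, quadruple by quadruple, one checks $q\,d\le a\,\lvert\leftsoper\rvert$; on the short list of tight quadruples where this just fails, one argues instead that $\rhooper=mq-c>0$ (equivalently $m\nmid c$) is forced, which closes the gap. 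That $12$ is sharp is exactly Example~\ref{ex:fromentin1}: for $S(4)=\langle 14,22,23\rangle_{56}$ one has $\lvert\leftsoper\rvert=13$, $q=4$, $a=3$ (the left primitives being $14,22,23$), $d=10$ and $\rhooper=0$, so $\eliahouoper(S(4))=3\cdot 13-4\cdot 10+0=-1<0$.

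The hard part is knowing which quadruples actually occur, equivalently establishing a sufficiently sharp upper bound on $d$: the estimate $d\le\binom{\lvert\leftsoper\rvert}{2}$ is far too weak on its own (with $q\le\lvert\leftsoper\rvert$ it yields only $q\,d\le\lvert\leftsoper\rvert\binom{\lvert\leftsoper\rvert}{2}$, much larger than $a\,\lvert\leftsoper\rvert$), and there is essentially no slack once $\lvert\leftsoper\rvert$ approaches $12$. One must genuinely show that a deep semigroup (large $q$) with few primitive left elements (small $a$) cannot carry many redundant generators in its threshold interval, which seems to require analysing the residue-class / Apéry structure of $S$ rather than its coarse invariants alone — and this is precisely where the value $12$ is used in an essential way.
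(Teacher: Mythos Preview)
The paper does not prove Proposition~\ref{prop:L<=12}: it is stated without proof and attributed via \cite{EliahouMarin-Aragon2019} to a personal communication. There is therefore no argument in the paper against which to compare your proposal.

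On your sketch itself: the setup is sound --- the rewriting $\eliahouoper(S)=a\lvert\leftsoper\rvert-qd+\rhooper$, the bound $q\le\lvert\leftsoper\rvert$ from $0,m,\dots,(q-1)m\in\leftsoper$, the fact that every $t\in D$ is a sum of two nonzero left elements, and the sharpness check via $S(4)$ are all correct. But you stop precisely where the substance begins. You announce a ``finite verification'' over quadruples $(\lvert\leftsoper\rvert,q,a,d)$ and then immediately concede that the crude estimate $d\le\binom{\lvert\leftsoper\rvert}{2}$ is far too weak and that deciding which quadruples are realisable ``seems to require analysing the residue-class / Ap\'ery structure'' --- without carrying out any such analysis. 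That missing step is the entire content of the result: why can a semigroup with $\lvert\leftsoper\rvert\le 12$ not simultaneously have large $q$, small $a$, and large $d$? Your proposal names this as ``the hard part'' and leaves it open, so what you have written is an honest outline of an approach rather than a proof.
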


\subsection{Natural constructions}
Barucci and Strazzanti gave in~\cite{BarucciStrazzanti2018SF-Dilatations} the definition of \emph{dilation of $S$ with respect to $a$}. Namely $D(S,a)=\{0\}\cup \{S+a\mid s\in S\setminus\{0\}\}$. Moreover they proved the following result.
\begin{proposition}[\cite{BarucciStrazzanti2018SF-Dilatations}]\label{prop:dilations}
	If $S$ is a Wilf semigroup $S$ then so is any dilation of $S$.
\end{proposition}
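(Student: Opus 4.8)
The plan is to express every invariant of the dilation $T:=D(S,a)=\{0\}\cup\{s+a\mid s\in S\setminus\{0\}\}$ in terms of the corresponding invariant of $S$, and then to read off that $\wilfoper(T)$ differs from $\wilfoper(S)$ by a nonnegative quantity. Throughout write $S^*=S\setminus\{0\}$, and recall that $T$ is a numerical semigroup precisely when the closure condition $a+(S^*+S^*)\subseteq S$ holds; this hypothesis will be essential. We may assume $a\ge 1$ (since $D(S,0)=S$) and that $S\neq\mathbb{N}$ (for $S=\mathbb{N}$ one has $D(\mathbb{N},a)=\{0,a+1,a+2,\dots\}$, with $\lvert\primitivesoper\rvert=a+1$, $\lvert\leftsoper\rvert=1$ and $\conductoroper=a+1$, hence $\wilfoper=0$).

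First I would collect the easy invariants. For $n\ge 1$ one has $n\in T\iff n-a\in S^*$, so the gap set of $T$ is $\{1,\dots,a\}\sqcup\bigl(a+\gapsoper(S)\bigr)$. Hence $\multiplicityoper(T)=\multiplicityoper(S)+a$, $\Frobeniusoper(T)=\Frobeniusoper(S)+a$, $\conductoroper(T)=\conductoroper(S)+a$ and $\genusoper(T)=\genusoper(S)+a$; consequently $\lvert\leftsoper(T)\rvert=\conductoroper(T)-\genusoper(T)=\conductoroper(S)-\genusoper(S)=\lvert\leftsoper(S)\rvert$.

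The main step is the identity $\lvert\primitivesoper(T)\rvert=\lvert\primitivesoper(S)\rvert+a$. Since $T^*=a+S^*$ we get $T^*+T^*=2a+(S^*+S^*)$, and as the minimal generators of a numerical semigroup are exactly the nonzero elements that are not a sum of two nonzero elements, $\primitivesoper(T)=T^*\setminus(T^*+T^*)=\{a+s\mid s\in S^*,\ s-a\notin S^*+S^*\}$. Put $R=S^*+S^*$ and $X=\mathbb{N}\setminus R$; a direct inspection shows $X=\{0\}\sqcup\gapsoper(S)\sqcup\primitivesoper(S)$ (a natural number fails to be a sum of two elements of $S^*$ iff it is $0$, a gap of $S$, or a minimal generator of $S$), so $\lvert X\rvert=1+\genusoper(S)+\lvert\primitivesoper(S)\rvert$. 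Splitting the elements $s\in S^*$ with $s-a\notin R$ according to whether $s<a$ (automatic, since $R\subseteq\mathbb{Z}_{>0}$) or $s\ge a$ (equivalently $s-a\in X$) yields
\[\lvert\primitivesoper(T)\rvert=\bigl\lvert\{s\in S\mid \multiplicityoper(S)\le s<a\}\bigr\rvert+\bigl\lvert\{x\in X\mid x+a\in S\}\bigr\rvert.\]
Now the closure hypothesis enters: if $g\in\gapsoper(S)$ with $g\ge a$ then $g-a\notin R$ (otherwise $g=a+(g-a)\in S$), so $g-a\in X$; and conversely if $x\in X$ and $x+a\notin S$ then $x+a\in\gapsoper(S)$ with $x+a\ge a$. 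Thus $x\mapsto x+a$ is a bijection from $\{x\in X\mid x+a\notin S\}$ onto $\{g\in\gapsoper(S)\mid g\ge a\}$, whence $\bigl\lvert\{x\in X\mid x+a\notin S\}\bigr\rvert=\genusoper(S)-\bigl((a-1)-\lvert\{s\in S\mid \multiplicityoper(S)\le s<a\}\rvert\bigr)$. Subtracting this from $\lvert X\rvert$ and substituting into the displayed formula, the term $\lvert\{s\in S\mid\multiplicityoper(S)\le s<a\}\rvert$ cancels and one is left with $\lvert\primitivesoper(T)\rvert=\lvert\primitivesoper(S)\rvert+a$.

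Finally, putting everything together,
\[\wilfoper(T)=\lvert\primitivesoper(T)\rvert\,\lvert\leftsoper(T)\rvert-\conductoroper(T)=\bigl(\lvert\primitivesoper(S)\rvert+a\bigr)\lvert\leftsoper(S)\rvert-\bigl(\conductoroper(S)+a\bigr)=\wilfoper(S)+a\bigl(\lvert\leftsoper(S)\rvert-1\bigr).\]
Since $0\in\leftsoper(S)$ we have $\lvert\leftsoper(S)\rvert\ge 1$, hence $\wilfoper(T)\ge\wilfoper(S)\ge 0$ and $T$ is a Wilf semigroup. The delicate point is the primitive count: the tempting idea that minimal generators of $S$ lift directly to minimal generators of $T$ does not hold verbatim (the closure condition controls $a+(S^*+S^*)$ but says nothing about $a+\primitivesoper(S)$), so one is forced to count $\primitivesoper(T)$ against the complement $X$ of $S^*+S^*$ and to exploit the bijection above — which is exactly where the assumption that $D(S,a)$ is a numerical semigroup is used. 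Everything else is routine bookkeeping with gap sets.
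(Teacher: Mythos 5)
Your proof is correct: the invariant computations ($\conductoroper$, $\genusoper$, $\lvert\leftsoper\rvert$ unchanged up to the shift, and the key identity $\lvert\primitivesoper(D(S,a))\rvert=\lvert\primitivesoper(S)\rvert+a$ via the bijection between $\{x\in X\mid x+a\notin S\}$ and the gaps of $S$ that are at least $a$) all check out, and they yield $\wilfoper(D(S,a))=\wilfoper(S)+a(\lvert\leftsoper(S)\rvert-1)\ge\wilfoper(S)$. The survey itself gives no proof of this proposition (it only cites Barucci--Strazzanti), and your argument is essentially the one in that reference --- compute every invariant of the dilation and compare Wilf numbers --- so there is nothing to flag.
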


Thus, for each Wilf semigroup $S$, the class $\{D(S,a)\mid a\in S\}$ is an infinite family of Wilf semigroups.

\subsection{Semigroups with small multiplicity}\label{sec:small-multiplicity}
Sammartano~\cite[Cor.~19]{Sammartano2012SF-Numerical} proved that semigroups of multiplicity not greater than $8$ are Wilf. Eliahou~\cite{Eliahou2017-Umea} announced the same kind of result but for multiplicity $12$ (with a similar proof; see the comment just after Theorem~\ref{th:large-embedding-dim-3}). In the meantime, Dhany~\cite[Cor.~4.10]{Dhayni2018PJM-Wilfs} had obtained the result for multiplicity $9$.

A big breakthrough was obtained by Bruns, García-Sánchez and O'Neil, who achieved multiplicity $16$. Their proof involves computational methods, combined with geometrical ones, such as the use of Kunz polytopes.  
\begin{proposition}[\cite{BrunsGarcia-SanchezONeill2019}]\label{prop:mult-16}
	Let $S$ be a numerical semigroup with $\multiplicityoper\le 16$. Then $S$ is Wilf.
\end{proposition}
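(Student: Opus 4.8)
For $\multiplicityoper(S)=1$ we have $S=\mathbb{N}$, which is Wilf, so fix $m$ with $2\le m\le 16$ (the cases $m\le 9$ are already known, but the argument is uniform). The plan is to encode every numerical semigroup of multiplicity $m$ as a lattice point of a single rational polyhedral cone, and to reduce the inequality $\wilfoper(S)\ge 0$ on that infinite family to a finite list of linear programs.

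The first step is Kunz coordinates. For $S$ with $\multiplicityoper(S)=m$, write $\Aperyoper(S,m)=\{0,w_1,\dots,w_{m-1}\}$ with $w_i\equiv i\pmod m$ and set $w_i=mx_i+i$, so that $x_i\ge 1$. That such a set be the Apéry set of a numerical semigroup of multiplicity $m$ is equivalent to the \emph{Kunz inequalities}
\[
x_i+x_j\ge x_{i+j}\ \ (i+j\le m-1),\qquad x_i+x_j+1\ge x_{i+j-m}\ \ (i+j\ge m+1),
\]
together with $x_i\ge 1$; these cut out the Kunz cone $\mathcal{C}_m\subseteq\mathbb{R}^{m-1}$, whose lattice points with all coordinates at least $1$ correspond bijectively to the numerical semigroups of multiplicity exactly $m$. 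In these coordinates $\genusoper(S)=\sum_{i=1}^{m-1}x_i$ and $\conductoroper(S)=\max_i(mx_i+i)-m+1$, hence $\lvert\leftsoper(S)\rvert=\conductoroper(S)-\genusoper(S)$; and $w_\ell$ fails to be a minimal generator exactly when some Kunz inequality with right-hand side $x_\ell$ holds with equality, so $\embeddingdimensionoper(S)=1+\#\{\ell:\text{no tight Kunz inequality has right-hand side }x_\ell\}$. Consequently, refining $\mathcal{C}_m$ into finitely many polyhedral cells according to (i) which Kunz inequalities are tight and (ii) which index attains $\max_i(mx_i+i)$, on each cell the embedding dimension is a constant $\embeddingdimensionoper_{\mathcal{F}}$ and $\genusoper$, $\conductoroper$ are affine-linear, so that $\wilfoper(S)=(\embeddingdimensionoper_{\mathcal{F}}-1)\,\conductoroper(S)-\embeddingdimensionoper_{\mathcal{F}}\,\genusoper(S)$ is an affine-linear functional of $x$ on that cell.

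The core of the proof is then a finite, cell-by-cell verification. Several regimes cost nothing: if $\embeddingdimensionoper_{\mathcal{F}}=m$ then $\wilfoper(S)=m\lvert\leftsoper(S)\rvert-\conductoroper(S)\ge m\qoper(S)-m\qoper(S)=0$, using $\conductoroper\le m\qoper$ and $\lvert\leftsoper(S)\rvert\ge\qoper(S)$ (since $0,\multiplicityoper,\dots,(\qoper-1)\multiplicityoper$ all lie in $\leftsoper(S)$); and within each cell one may restrict to the integer points with $\lvert\leftsoper(S)\rvert\ge 13$, the remaining ones being Wilf by Proposition~\ref{prop:L<=12}. More generally, cells falling under the large-embedding-dimension estimates, the small-depth results, or the nonnegative-Eliahou-number criterion (Proposition~\ref{prop:eliahou-implies-wilf}) can be set aside. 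For each cell that survives, one must decide whether the affine functional $\wilfoper$ is nonnegative on all integer points of that cell with $x\ge\mathbf{1}$; organizing this as a linear program and running it over all cells — with Kunz-polytope computations handling the bookkeeping — is precisely the work carried out in~\cite{BrunsGarcia-SanchezONeill2019}, and the outcome is that every such program has nonnegative optimum whenever $m\le 16$.

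The main obstacle is twofold. First, the number of faces of $\mathcal{C}_m$ — equivalently, of admissible patterns of tight Kunz inequalities — grows extremely fast with $m$, so enumerating the cells and solving the associated linear programs is the genuine computational bottleneck, and this is exactly why the method reaches $m=16$ and not more. Second, one has to bridge the gap between the real relaxation and the integer problem: an affine functional can be negative at a real point of a cell while remaining nonnegative at every lattice point of it (or conversely), so on the delicate cells the linear program must be supplemented by an argument over integer points, and verifying that nothing goes wrong there for all $m\le 16$ is the technically careful part.
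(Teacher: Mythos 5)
Your outline matches the approach the paper attributes to this result: the paper itself gives no proof, only the citation and the one-line description that the argument ``involves computational methods, combined with geometrical ones, such as the use of Kunz polytopes,'' and your reduction via Kunz coordinates to affine-linear Wilf functionals on the faces of the Kunz cone, with easy regimes (maximal embedding dimension, small $\lvert\leftsoper\rvert$, nonnegative Eliahou number) discarded first, is exactly that strategy, and the formulas you use ($\genusoper=\sum x_i$, $\conductoroper=\max_i(\multiplicityoper x_i+i)-\multiplicityoper+1$, embedding dimension from the tight Kunz inequalities) are all correct. The only caveat is that your last step --- that every surviving integer program has nonnegative optimum for $\multiplicityoper\le 16$ --- is asserted rather than carried out, but this finite verification is precisely the computation delegated to the cited work, both in your write-up and in the paper itself.
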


\subsection{Semigroups with large embedding dimension (compared to the multiplicity)}\label{sec:large-emb-dim}
As stated in Theorem~\ref{th:type<=3}, numerical semigroups with very small ($\le 3$) embedding dimension are Wilf.
The same happens for those with large embedding dimension (when compared to the multiplicity). 
There are several proofs of the fact that semigroups of maximal embedding dimension (i.e., with embedding dimension equal to the multiplicity). The first one (to the best of my knowledge) is due to Dobbs and Matthews~\cite[Cor.~2.4]{DobbsMatthews2006}.
Sammartano~\cite[Th.~18]{Sammartano2012SF-Numerical}, by means of a rather technical proof involving Apéry sets and the counting in intervals of length $\multiplicityoper$ of elements in the semigroup, proved that numerical semigroups with embedding dimension at least half the multiplicity are Wilf. With refined arguments, 
Dhayni 
(see also her thesis~\cite[Th. 2.3.13]{Dhayni2017phd-Problems}) 
generalized Sammartano's result.

\begin{proposition}[{\cite[Th. 4.12]{Dhayni2018PJM-Wilfs}}]\label{wilf-particular:large-embedding-dim}
    Let $S$ be a numerical semigroup with $\left(2+\frac{1}{\qoper}\right)\embeddingdimensionoper\ge \multiplicityoper$. Then $S$ is Wilf.
\end{proposition}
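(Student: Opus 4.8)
The plan is to follow the scheme of Sammartano's proof of the case $\embeddingdimensionoper\ge\multiplicityoper/2$ of Proposition~\ref{wilf-particular:large-embedding-dim} and to sharpen the counting so that the weaker hypothesis $\left(2+\frac{1}{\qoper}\right)\embeddingdimensionoper\ge\multiplicityoper$ still suffices. Write $\embeddingdimensionoper,\multiplicityoper,\qoper,\conductoroper,\leftsoper,\primitivesoper$ for the invariants of $S$ and set $\rho:=\multiplicityoper\qoper-\conductoroper$, so $0\le\rho<\multiplicityoper$; what must be shown is $\embeddingdimensionoper\,\lvert\leftsoper\rvert\ge\conductoroper$, and one always has $\conductoroper\le\multiplicityoper\qoper$. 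Several cases are settled at once. If $\embeddingdimensionoper\le 3$ then $S$ is Wilf by Theorem~\ref{th:type<=3}. Since $\{0,\multiplicityoper,\dots,(\qoper-1)\multiplicityoper\}\subseteq\leftsoper$ we always have $\lvert\leftsoper\rvert\ge\qoper$, so if $\embeddingdimensionoper=\multiplicityoper$ then $\embeddingdimensionoper\,\lvert\leftsoper\rvert\ge\multiplicityoper\qoper\ge\conductoroper$; this also covers $\qoper=1$, which forces $[\multiplicityoper,\infty)\subseteq S$ and hence maximal embedding dimension. So assume from now on $\embeddingdimensionoper\ge 4$, $\qoper\ge 2$ and $\embeddingdimensionoper<\multiplicityoper$.

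Next I would set up the interval counting. For $j\ge 0$ put $a_j:=\lvert S\cap[j\multiplicityoper,(j+1)\multiplicityoper)\rvert$. Then $a_0=1$; the translation $x\mapsto x+\multiplicityoper$ embeds $S\cap[j\multiplicityoper,(j+1)\multiplicityoper)$ into the next block, so $1=a_0\le a_1\le a_2\le\cdots$; and $a_j=\multiplicityoper$ for $j\ge\qoper$ because $[\qoper\multiplicityoper,\infty)\subseteq[\conductoroper,\infty)\subseteq S$. Counting over $[0,\qoper\multiplicityoper)$ gives $\lvert\leftsoper\rvert=\sum_{j=0}^{\qoper-1}a_j-\rho$. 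Moreover every primitive lies in $[\multiplicityoper,\conductoroper+\multiplicityoper)$ (an integer $\ge\conductoroper+\multiplicityoper$ is $x+\multiplicityoper$ with $x\in S$, $x\ge\conductoroper$, hence not primitive); in a block $[j\multiplicityoper,(j+1)\multiplicityoper)$ with $j\ge 2$ a primitive belongs to $\Aperyoper(S,\multiplicityoper)$, so at most $a_j-a_{j-1}$ primitives lie there; and, since distinct primitives fall in distinct residue classes mod $\multiplicityoper$, the same translation trick yields $a_j\ge\lvert\primitivesoper\cap[0,(j+1)\multiplicityoper)\rvert$ for all $j$. Writing $D:=\primitivesoper\cap[\conductoroper,\conductoroper+\multiplicityoper)$ for the primitives in the threshold interval, one has $\embeddingdimensionoper=\lvert\primitivesoper\cap\leftsoper\rvert+\lvert D\rvert$, and bounding $D$ by the Apéry elements in the threshold interval gives $\lvert D\rvert\le(\multiplicityoper-a_{\qoper-1})+\lvert\Aperyoper(S,\multiplicityoper)\cap[\conductoroper,\qoper\multiplicityoper)\rvert\le\multiplicityoper-a_{\qoper-1}+\rho$.

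These ingredients give several lower bounds for $\lvert\leftsoper\rvert$: the block formula with monotonicity and $a_0=1$ gives $\lvert\leftsoper\rvert\ge a_{\qoper-1}+(\qoper-1)-\rho$; the multiples of $\multiplicityoper$ together with $a_{\qoper-1}\ge\lvert\primitivesoper\cap\leftsoper\rvert$ give $\lvert\leftsoper\rvert\ge\lvert\primitivesoper\cap\leftsoper\rvert+\qoper-1=\embeddingdimensionoper-\lvert D\rvert+\qoper-1$; and the block-by-block inequality $\lvert\leftsoper\rvert\ge\sum_{j=0}^{\qoper-1}\lvert\primitivesoper\cap[0,(j+1)\multiplicityoper)\rvert-\rho$ credits a primitive in an early block toward many of the $a_j$. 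One then substitutes these, with $\conductoroper=\multiplicityoper\qoper-\rho$ and the hypothesis in the form $\embeddingdimensionoper\ge\multiplicityoper\qoper/(2\qoper+1)$, into $\embeddingdimensionoper\,\lvert\leftsoper\rvert\ge\conductoroper$. The argument naturally splits according to whether the $\embeddingdimensionoper-\lvert D\rvert$ primitives below $\conductoroper$ are spread across the first $\qoper-1$ blocks (so the block-by-block bound is already large) or cluster just below $\conductoroper$ (so $a_{\qoper-1}$, hence $\lvert\primitivesoper\cap\leftsoper\rvert$, is large while $\lvert D\rvert$ is small).

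The genuine difficulty is the last step: obtaining the precise constant $2+1/\qoper$. Most one-line combinations of the inequalities above are circular — they merely re-derive $\embeddingdimensionoper\le\multiplicityoper$ — and the crude estimates $a_{\qoper-1}\le\multiplicityoper$, $\rho\ge 0$ only recover Sammartano's $\embeddingdimensionoper\ge\multiplicityoper/2$. To do better one must exploit that (i) the block $[(\qoper-1)\multiplicityoper,\qoper\multiplicityoper)$ contains the Frobenius number and so is never full, whence $a_{\qoper-1}\le\multiplicityoper-1$, and more sharply that the naive bound $\lvert\leftsoper\rvert=\lvert\primitivesoper\cap\leftsoper\rvert+\qoper-1$ is attained only in configurations forcing $\embeddingdimensionoper=\multiplicityoper$, so $\embeddingdimensionoper<\multiplicityoper$ leaves genuine slack; (ii) the loss term $\rho=\multiplicityoper\qoper-\conductoroper$ must be carried through every estimate, because the threshold interval $[\conductoroper,\conductoroper+\multiplicityoper)$ straddles the blocks $[(\qoper-1)\multiplicityoper,\qoper\multiplicityoper)$ and $[\qoper\multiplicityoper,(\qoper+1)\multiplicityoper)$ whenever $\conductoroper<\qoper\multiplicityoper$; and (iii) the several lower bounds for $\lvert\leftsoper\rvert$ must be played against one another so that the auxiliary quantity $a_{\qoper-1}$ drops out and the worst case collapses onto the known equality configurations — maximal embedding dimension, and, when $\qoper$ is small, semigroups close to $\langle\multiplicityoper,\qoper\multiplicityoper+1,\dots,\qoper\multiplicityoper+\multiplicityoper-1\rangle$. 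I expect this bookkeeping, rather than any single new idea, to be the heart of the proof.
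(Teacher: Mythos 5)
Your setup is sound and matches the framework that the survey attributes to Sammartano and Dhayni: the block counts $a_j=\lvert S\cap[j\multiplicityoper,(j+1)\multiplicityoper)\rvert$, the identity $\lvert\leftsoper\rvert=\sum_{j=0}^{\qoper-1}a_j-\rho$ with $\rho=\multiplicityoper\qoper-\conductoroper$, the monotonicity of the $a_j$, the fact that every primitive other than $\multiplicityoper$ lies in $\Aperyoper(S,\multiplicityoper)$, the bound $a_j\ge\lvert\primitivesoper\cap[0,(j+1)\multiplicityoper)\rvert$, and the estimate $\lvert D\rvert\le\multiplicityoper-a_{\qoper-1}+\rho$ for the primitives $D$ in the threshold interval are all correct, as are your preliminary reductions (the cases $\embeddingdimensionoper\le 3$, $\embeddingdimensionoper=\multiplicityoper$, $\qoper=1$). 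But the proposal stops exactly where the theorem begins. As you yourself concede, the combinations of these inequalities that you describe only recover Sammartano's constant $2$; the entire content of Dhayni's Theorem 4.12 is the passage from $2$ to $2+\frac{1}{\qoper}$, and for that step you offer only a list of places where slack might be found together with the expectation that the bookkeeping will work out. No inequality chain is exhibited that takes the hypothesis $(2\qoper+1)\embeddingdimensionoper\ge\qoper\multiplicityoper$ to the conclusion $\embeddingdimensionoper\,\lvert\leftsoper\rvert\ge\conductoroper$, and it is precisely this step that occupies the bulk of Dhayni's argument (a finer analysis of how the non-primitive nonzero Apéry elements decompose as sums of two nonzero Apéry elements, which forces extra elements of $S$ into the early blocks and yields a lower bound on $\lvert\leftsoper\rvert$ strictly better than $\embeddingdimensionoper-\lvert D\rvert+\qoper-1$ in the critical regime $3\embeddingdimensionoper<\multiplicityoper<\left(2+\frac{1}{\qoper}\right)\embeddingdimensionoper$). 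This is a genuine gap, not a stylistic omission: the missing optimization is the theorem.

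For what it is worth, the survey itself gives no proof of this proposition --- it only cites Dhayni --- so there is no in-paper argument to compare your route against; but measured as a self-contained proof, what you have written is an accurate reconstruction of the common starting point of Sammartano's and Dhayni's proofs followed by an honest admission that the decisive quantitative lemma has not been found. To complete it you would need to actually prove such a lemma and verify that the worst case of the resulting optimization collapses onto the known equality configurations, neither of which is carried out here.
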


Eliahou obtained the following impressive generalization by using a graph theoretical approach. The concept of matching (set of independent edges) in a certain graph associated to the Apéry set is used. 

\begin{theorem}[\cite{Eliahou2017-Umea}]\label{th:large-embedding-dim-3}
	Let $S$ be a numerical semigroup with $3\embeddingdimensionoper(S)\ge \multiplicityoper(S)$. Then $S$ is Wilf.
\end{theorem}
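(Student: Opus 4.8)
The plan is to establish the nonnegativity of the Eliahou number $\eliahouoper(S)$ and conclude via Proposition~\ref{prop:eliahou-implies-wilf}, after first trimming the range of parameters that must be handled by hand. If $\multiplicityoper(S)$ is small one is done by Proposition~\ref{prop:mult-16}, so assume $\multiplicityoper$ is large; and if $\left(2+\tfrac{1}{\qoper}\right)\embeddingdimensionoper\ge\multiplicityoper$ one is done by Proposition~\ref{wilf-particular:large-embedding-dim}, so — recalling that $3\embeddingdimensionoper\ge\multiplicityoper$ is exactly the hypothesis and that it coincides with Dhayni's condition when $\qoper=1$ — we may assume $\qoper\ge 2$ and $\tfrac{\multiplicityoper}{3}\le\embeddingdimensionoper<\tfrac{\multiplicityoper}{2+1/\qoper}$, that is, we are working well away from maximal embedding dimension. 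Writing $D=\primitivesoper\cap\{\conductoroper,\dots,\conductoroper+\multiplicityoper-1\}$ and using that no primitive is $\ge\conductoroper+\multiplicityoper$ (otherwise $p-\multiplicityoper\in S$, contradicting minimality), one gets $\left|\primitivesoper\cap\leftsoper\right|=\embeddingdimensionoper-\left|D\right|$, hence
\[
\eliahouoper(S)=(\embeddingdimensionoper-\left|D\right|)\left|\leftsoper\right|-\qoper\left|D\right|+(\qoper\multiplicityoper-\conductoroper),
\]
with $\qoper\multiplicityoper-\conductoroper\ge 0$; the goal becomes a lower bound on $\left|\leftsoper\right|$ in terms of $\embeddingdimensionoper$, $\left|D\right|$, $\qoper$ and $\multiplicityoper$, the inequality being vacuous when $D=\emptyset$.

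The counting engine is the block partition $\mathbb{N}=\bigsqcup_{j\ge 0}I_j$ with $I_j=\{j\multiplicityoper,\dots,(j+1)\multiplicityoper-1\}$. Put $n_j=\left|S\cap I_j\right|$ and $a_j=\left|(\primitivesoper\setminus\{\multiplicityoper\})\cap I_j\right|$; then $n_0=1$, $n_j=\multiplicityoper$ for $j\ge\qoper$, $(\qoper-1)\multiplicityoper<\conductoroper\le\qoper\multiplicityoper$, $\sum_{j\ge 1}a_j=\embeddingdimensionoper-1$, and $\left|\leftsoper\right|=\sum_{j=0}^{\qoper-2}n_j+\left|S\cap[(\qoper-1)\multiplicityoper,\conductoroper)\right|$. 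For $j\ge 1$, $S\cap I_j$ is the disjoint union of $\multiplicityoper+(S\cap I_{j-1})$ and $\Aperyoper(S,\multiplicityoper)\cap I_j$, and the latter contains the $a_j$ primitives lying in $I_j$ (a primitive other than $\multiplicityoper$ lies in $\Aperyoper(S,\multiplicityoper)$); hence $n_j\ge n_{j-1}+a_j$, so $n_j\ge 1+(a_1+\dots+a_j)$. Summing over $j<\qoper$ produces a lower bound for $\left|\leftsoper\right|$ in which a primitive sitting in block $I_i$ carries weight of order $\qoper-i$; combined with $\left|D\right|\le a_{\qoper-1}+a_{\qoper}$ and a sharper count showing that a large $\left|D\right|$ forces many gaps in $[(\qoper-1)\multiplicityoper,\conductoroper)$, this already secures $\eliahouoper(S)\ge 0$ in a range of the shape $2\embeddingdimensionoper\gtrsim\multiplicityoper$, essentially reproving Proposition~\ref{wilf-particular:large-embedding-dim}.

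To cross from the $2\embeddingdimensionoper\gtrsim\multiplicityoper$ regime down to $3\embeddingdimensionoper\ge\multiplicityoper$ a second-order estimate is needed, and this is where the graph enters. Form a graph on the nonzero elements of $\Aperyoper(S,\multiplicityoper)$ (equivalently on a suitable family of primitives) by joining $x$ and $y$ whenever $x+y$ is again an element of $\Aperyoper(S,\multiplicityoper)$ — a ``reduced'' sum, not of the form $\multiplicityoper+(\text{element of }S)$ — and select a maximum matching $M$. A matched pair contributes an element of $\Aperyoper(S,\multiplicityoper)$ that is not among the elements counted in the step $n_{j-1}\mapsto n_j$ (it is in the Apéry set) nor among the $a_j$ primitives (it is a proper sum), so the recursion should improve to roughly $n_j\gtrsim n_{j-1}+a_j+(\text{distinct matched reduced sums landing in }I_j)$; equivalently, the number of left Apéry elements exceeds $\left|\primitivesoper\cap\leftsoper\right|$ by a contribution of order $\left|M\right|$ supported on the blocks below the conductor. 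The crux is then a sufficiently strong lower bound on $\left|M\right|$: an obstruction to a large matching is a family of primitives all of whose pairwise sums are non-reduced, and a Hall/greedy-type argument bounds the size of any such family against $\multiplicityoper$. Feeding the resulting estimate into the displayed formula for $\eliahouoper(S)$, the numerology is arranged so that nonnegativity holds exactly at the threshold $3\embeddingdimensionoper\ge\multiplicityoper$.

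I expect the matching step to be the main obstacle, on three intertwined counts: (i) the graph must be set up so that matched pairs yield genuinely distinct new semigroup elements — disjointness of edges alone does not rule out $x+y=x'+y'$, so one must build in a canonical representative or an orientation to forbid such collisions; (ii) one must guarantee that the new elements fall below $\conductoroper$, where they enlarge $\leftsoper$, rather than into the threshold interval, where they are useless; and (iii) the lower bound on $\left|M\right|$ in terms of $\embeddingdimensionoper$ and $\multiplicityoper$ is the real content of the argument, and the place where the constant $3$ is produced. The remaining difficulties are the fiddly but routine bookkeeping around the block $I_{\qoper-1}$, which straddles the conductor (so that $\left|D\right|$, $a_{\qoper-1}$ and $\left|S\cap[(\qoper-1)\multiplicityoper,\conductoroper)\right|$ are coupled), and the clean disposal of the excluded small-$\multiplicityoper$ and $\qoper=1$ cases, already subsumed by Propositions~\ref{prop:mult-16} and~\ref{wilf-particular:large-embedding-dim}.
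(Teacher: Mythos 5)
The paper offers no proof of this theorem: it is a survey item, cited to Eliahou's 2017 slides, with only the one-line indication that the argument is graph-theoretic and hinges on a matching in a graph associated to the Ap\'ery set. Your outline is consistent with that indication --- reduction to $\eliahouoper(S)\ge 0$ via Proposition~\ref{prop:eliahou-implies-wilf}, the Sammartano-style block recursion $n_j\ge n_{j-1}+a_j$ to cover the regime of Proposition~\ref{wilf-particular:large-embedding-dim}, then a matching on pairs of primitives with reduced (Ap\'ery) sum to gain the additional left elements needed to push the constant from $2$ to $3$. The preliminary reductions, the identity $\lvert\primitivesoper\cap\leftsoper\rvert=\embeddingdimensionoper-\lvert D\rvert$, and the inequality $\lvert D\rvert\le a_{\qoper-1}+a_{\qoper}$ are all correct.

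But what you have written is a plan, not a proof, and the gap sits exactly where you yourself locate it. Three assertions carry the whole weight and none is established: (i) that distinct edges of the matching yield distinct new elements of $\Aperyoper(S,\multiplicityoper)$ --- disjointness of edges does not exclude $x+y=x'+y'$, and your proposed remedy (``a canonical representative or an orientation'') is named but not constructed; (ii) that these sums fall below $\conductoroper$, where they enlarge $\leftsoper$, rather than into the threshold interval, where they contribute nothing; and (iii) the lower bound on $\lvert M\rvert$ in terms of $\embeddingdimensionoper$ and $\multiplicityoper$, which is the entire content of the theorem and the only possible source of the constant $3$. The sentence ``the numerology is arranged so that nonnegativity holds exactly at the threshold $3\embeddingdimensionoper\ge\multiplicityoper$'' restates the goal rather than deriving it. In Eliahou's actual argument step (iii) is a substantive combinatorial theorem about matchings in the Ap\'ery graph, not a Hall/greedy observation; until it is supplied, your argument establishes nothing beyond the range already covered by Proposition~\ref{wilf-particular:large-embedding-dim}.
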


\subsubsection*{Some comments}
\begin{enumerate}
    \item Previous result implies that semigroups of multiplicity up to $12$ are Wilf, as already mentioned. 
    Note that a non Wilf semigroup $S$ must satisfy $\embeddingdimensionoper(S)\ge 4$ by Theorem~\ref{th:type<=3}. As it must also satisfy $3\embeddingdimensionoper(S)<\multiplicityoper(S)$, it follows that if $S$ is non Wilf, then $\multiplicityoper(S) > 12$. (Sammartano's result implies $\multiplicityoper(S) > 8$ for $S$ non Wilf.)
    \item A positive answer to Question~\ref{quest:emb-dim-asymptotics} would lead to possibly interesting consequences. For instance, if the limit were $1/2$, one could conclude that asymptotically, as $\genusoper$ grows, half of the numerical semigroups satisfy $3\embeddingdimensionoper\ge \multiplicityoper$, and consequently are Wilf.
\end{enumerate}

Two other simple consequences of Theorem~\ref{th:large-embedding-dim-3} follow. The first is a generalization of a result of Dhany~\cite[Th.~4.9]{Dhayni2018PJM-Wilfs}, who proved that semigroups satisfying $\multiplicityoper-\embeddingdimensionoper\le 5$ are Wilf. At this stage, this remark does not give anything new, but gives a very explicit result.
\begin{remark}
	If $\embeddingdimensionoper(S)\ge\multiplicityoper(S)-10$, then $S$ is Wilf.
\end{remark}
\begin{proof}
	If $\multiplicityoper(S)\le 16$, then Proposition~\ref{prop:mult-16} can be used.
	If $\multiplicityoper(S)\ge 16$, then $\multiplicityoper(S)/3 > 5$. But in this case $\multiplicityoper(S) - 10 \ge \multiplicityoper(S)/3$. In fact, this last inequality is equivalent to $2\multiplicityoper(S) \ge 30$, which holds by hypothesis.
\end{proof}

Another consequence (it suffices Sammartano's result to get it), was observed by Eliahou.
\begin{proposition}[{\cite[Prop.~7.6]{Eliahou2018JEMS-Wilfs}}]
	Let $S$ be a numerical semigroup with $\gcd(\leftsoper\cap \primitivesoper) \ge~2$. Then $S$ is Wilf.
\end{proposition}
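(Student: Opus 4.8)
The strategy is to deduce the statement from Sammartano's theorem that $2\embeddingdimensionoper(S)\ge\multiplicityoper(S)$ implies $S$ is Wilf (equivalently, from Theorem~\ref{th:large-embedding-dim-3}), by showing that the hypothesis $\gcd\big(\leftsoper(S)\cap\primitivesoper(S)\big)\ge 2$ forces $S$ to have large embedding dimension. Set $g=\gcd\big(\leftsoper(S)\cap\primitivesoper(S)\big)$. Since the $\gcd$ of the empty set is $0$, the hypothesis already forces $\leftsoper(S)\cap\primitivesoper(S)\neq\emptyset$; choosing a minimal generator smaller than $\conductoroper(S)$ shows $\multiplicityoper(S)<\conductoroper(S)$, whence $\multiplicityoper(S)\in\leftsoper(S)\cap\primitivesoper(S)$ and in particular $g\mid\multiplicityoper(S)$.

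Two observations do the work. First, \emph{every left element of $S$ is divisible by $g$}: if $0\neq x\in\leftsoper(S)$ then $x$ is a sum of minimal generators of $S$, each of which is at most $x<\conductoroper(S)$ and therefore lies in $\leftsoper(S)\cap\primitivesoper(S)$, hence is a multiple of $g$. Second, I claim that \emph{for every residue class $r$ modulo $\multiplicityoper(S)$ with $g\nmid r$, the unique element $t_r$ of the threshold interval $\{\conductoroper,\dots,\conductoroper+\multiplicityoper-1\}$ lying in that class is a minimal generator of $S$}. Indeed $t_r\ge\conductoroper(S)$, so $t_r\in S$; were $t_r$ not a minimal generator we could write $t_r=s+s'$ with $s,s'\in S\setminus\{0\}$, so that $s,s'\ge\multiplicityoper(S)$ and hence $0<s'=t_r-s\le(\conductoroper+\multiplicityoper-1)-\multiplicityoper=\conductoroper-1$, and likewise $0<s<\conductoroper$. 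Thus $s,s'\in\leftsoper(S)$, so $g$ divides both $s$ and $s'$ by the first observation, and therefore $g\mid t_r$, contradicting $g\nmid r$.

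To conclude, the number of residue classes modulo $\multiplicityoper(S)$ not divisible by $g$ equals $\multiplicityoper(S)-\multiplicityoper(S)/g\ge\multiplicityoper(S)/2$ since $g\ge 2$ and $g\mid\multiplicityoper(S)$; by the second observation the threshold interval contains at least that many minimal generators of $S$, and together with the generator $\multiplicityoper(S)$ this gives $\embeddingdimensionoper(S)\ge 1+\multiplicityoper(S)/2>\multiplicityoper(S)/2$. Sammartano's result (or Theorem~\ref{th:large-embedding-dim-3}) then yields that $S$ is Wilf. The crux of the argument is the second observation, but once the first one is in place it is essentially immediate; I do not foresee any real obstacle, the only point requiring a moment's attention being that the hypothesis already forces $\multiplicityoper(S)<\conductoroper(S)$ (so that, in particular, $S\neq\mathbb{N}$).
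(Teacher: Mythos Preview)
Your argument is correct and follows exactly the route the paper indicates: the survey does not give a proof but notes that the proposition is a consequence of Sammartano's result $2\embeddingdimensionoper\ge\multiplicityoper\Rightarrow S$ Wilf, and your two observations establish precisely that inequality. The counting at the end is clean: with $g\mid\multiplicityoper$ and $g\ge 2$ you get at least $\multiplicityoper-\multiplicityoper/g\ge\multiplicityoper/2$ primitives in the threshold interval, plus $\multiplicityoper$ itself, so $\embeddingdimensionoper>\multiplicityoper/2$.
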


\subsection{Semigroups with big multiplicity (and possibly small embedding dimension)}
Moscariello and Sammartano proved that for every fixed value of $\lceil \multiplicityoper/\embeddingdimensionoper\rceil$ the conjecture holds for all values of $\multiplicityoper$ which are sufficiently large and are not divisible by a finite set of primes. Recall from previous subsection that the cases $\lceil \multiplicityoper/\embeddingdimensionoper\rceil\le 3$ have been solved. 
\begin{proposition}[{\cite[Th.~1]{MoscarielloSammartano2015MZ-conjecture}}]\label{prop:very-large-mult}
	Let $S$ be a numerical semigroup. Let $\rho=\big\lceil \frac{\multiplicityoper(S)}{\embeddingdimensionoper(S)}\big\rceil$ and let $\phi$ be the product of prime factors of $\rho$. If $\rho>3$, $\multiplicityoper(S)\ge\frac{\rho(3\rho^2-\rho-4)(3\rho^2-\rho-2)}{8(\rho-2)}$ and $\gcd(\multiplicityoper(S),\phi)=1$, then $S$ is Wilf. 
\end{proposition}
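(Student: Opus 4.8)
The plan is to establish $\wilfoper(S)=\embeddingdimensionoper\,\lvert\leftsoper\rvert-\conductoroper\ge 0$ directly. As $\genusoper+\lvert\leftsoper\rvert=\conductoroper$, this is equivalent to the density inequality $\lvert\leftsoper\rvert\ge\conductoroper/\embeddingdimensionoper$, that is, to the genus bound $\genusoper\le\conductoroper\bigl(1-1/\embeddingdimensionoper\bigr)$. So the whole point is to show that when $\embeddingdimensionoper\ge\multiplicityoper/\rho$ --- the embedding dimension is a fixed fraction of the multiplicity --- and $\multiplicityoper$ is huge, $S$ cannot be too sparse below its conductor.

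First I would fix the standard combinatorial skeleton. Cut $\mathbb N$ into the blocks $I_j=[j\multiplicityoper,(j+1)\multiplicityoper)$ and put $\ell_j=\lvert S\cap I_j\rvert$; then $1=\ell_0\le\ell_1\le\cdots$ (translation by $\multiplicityoper$ injects $S\cap I_j$ into $S\cap I_{j+1}$), $\ell_j=\multiplicityoper$ for $j\ge\qoper$, and $\genusoper=\qoper\multiplicityoper-\sum_{j=0}^{\qoper-1}\ell_j$. In terms of the Apéry set $\Aperyoper(S,\multiplicityoper)=\{w_0=0,w_1,\dots,w_{\multiplicityoper-1}\}$, where $w_i$ is the least element of $S$ congruent to $i$ modulo $\multiplicityoper$, and the \emph{level} $\lambda(w)=\lfloor w/\multiplicityoper\rfloor$, one has $\ell_j=\lvert\{w\in\Aperyoper(S,\multiplicityoper):\lambda(w)\le j\}\rvert$ and $\genusoper=\sum_i\lambda(w_i)$. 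I would record three elementary facts: (i) the $\embeddingdimensionoper-1$ minimal generators other than $\multiplicityoper$ --- call them \emph{atoms} --- have pairwise distinct nonzero residues modulo $\multiplicityoper$, since two congruent ones differ by a positive multiple of $\multiplicityoper$ and cannot both be minimal; (ii) every nonzero element of $\Aperyoper(S,\multiplicityoper)$ is a sum of atoms only, because a factorization involving $\multiplicityoper$ would put $w-\multiplicityoper$ into $S$; (iii) if such a $w$ is a sum of $t$ atoms then $t\le\lambda(w)\le\qoper$, because each atom exceeds $\multiplicityoper$ and $w\le\Frobeniusoper(S)+\multiplicityoper<(\qoper+1)\multiplicityoper$.

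The engine has two parts. \emph{Arithmetic part:} since $\embeddingdimensionoper\ge\multiplicityoper/\rho$, the set $A\subseteq\mathbb Z/\multiplicityoper$ made of $0$ and the atom-residues has $\lvert A\rvert=\embeddingdimensionoper\ge\multiplicityoper/\rho$, and I would want sums of about $\rho$ atoms to cover almost all of $\mathbb Z/\multiplicityoper$; then every $w_i$, hence every $\lambda(w_i)$, hence $\genusoper=\sum_i\lambda(w_i)$, is controlled. I would get this from a Cauchy--Davenport/Kneser-type bound for the iterated sumset, $\lvert\rho A\rvert\ge\min\!\bigl(\multiplicityoper,\ \rho(\lvert A\rvert-1)+1\bigr)\ge\min\!\bigl(\multiplicityoper,\ \multiplicityoper-\rho+1\bigr)$, valid once the subgroup obstruction is excluded --- and that exclusion is precisely what $\gcd(\multiplicityoper,\phi)=1$ buys ($\phi$ the radical of $\rho$: $\multiplicityoper$ shares no prime with $\rho$, so the atoms cannot be confined to a few cosets of a proper subgroup). \emph{Counting part:} the non-generators in the threshold interval $[\conductoroper,\conductoroper+\multiplicityoper)$ inject into $\primitivesoper\times\leftsoper$ via $s\mapsto(p,s-p)$ for a generator $p$ with $s-p\in S$ (since $p\ge\multiplicityoper$ we get $s-p<\conductoroper$), whence $\multiplicityoper-\lvert D\rvert\le\embeddingdimensionoper\,\lvert\leftsoper\rvert$ with $D=\primitivesoper\cap[\conductoroper,\conductoroper+\multiplicityoper)$, so $\lvert\leftsoper\rvert$ is already at least about $\multiplicityoper/\embeddingdimensionoper$. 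One then splits on the sizes of the atoms: when all atoms are small ($<2\multiplicityoper$), the sumset bound pins $\qoper=O(\rho)$ and the monotone $\ell_j$, now at least about $\multiplicityoper/\rho$, upgrade the estimate to $\genusoper\le\conductoroper(1-1/\embeddingdimensionoper)$; when many atoms are large, the depth need not be bounded, but the $\embeddingdimensionoper-1$ pairwise disjoint progressions $\{a_j+k\multiplicityoper:k\ge 0\}$ (disjoint because of the distinct residues) force $\lvert\leftsoper\rvert$ to grow with $\conductoroper$, once one observes the atoms cannot all sit just below $\conductoroper$ --- that would make $\Aperyoper(S,\multiplicityoper)$ consist of $0$ and the atoms alone, i.e.\ $\embeddingdimensionoper=\multiplicityoper$, contradicting $\rho>3$.

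The step I expect to be the main obstacle is making all of this quantitative enough to land exactly on the threshold $\multiplicityoper\ge\tfrac{\rho(3\rho^2-\rho-4)(3\rho^2-\rho-2)}{8(\rho-2)}$; rewritten, this equals $\tfrac{\rho}{\rho-2}\binom{Y-1}{2}$ with $Y=\rho(3\rho-1)/2$, and so reads like the cost of the extremal configuration --- roughly one missing generator per ``level $\times$ residue'' slot over about $3\rho/2$ levels, counted in pairs. Concretely one must (a) control the loss coming from atoms of intermediate size, bridging the two regimes above; (b) sharpen the wasteful bound $\lvert\leftsoper\rvert\ge\sum_{j=0}^{\qoper-2}\ell_j$ by a boundary analysis at the conductor, using that every residue $i$ with $\lambda(w_i)=\qoper$ satisfies $i<\conductoroper-(\qoper-1)\multiplicityoper$; and (c) deal with the few genuinely degenerate low-depth configurations by hand. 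I expect the coprimality hypothesis to be needed essentially at the sumset step, and the polynomial in $\rho$ to be exactly the price of spreading the atoms as thinly as the hypotheses permit.
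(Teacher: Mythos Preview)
The paper does not prove this proposition at all: it is a survey, and the statement is quoted verbatim from Moscariello--Sammartano with only a one-line \textsf{GAP} computation afterwards illustrating the size of the threshold for $\rho=4$. There is therefore no ``paper's own proof'' to compare your proposal against.

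As for the proposal itself, it is not a proof but a strategic outline, and you acknowledge this explicitly: you write that the main obstacle is ``making all of this quantitative enough to land exactly on the threshold'', and you list three tasks (a), (b), (c) that ``one must'' carry out without carrying them out. So as written this is a plan, not an argument, and the genuine work --- the quantitative estimates that produce the precise polynomial $\tfrac{\rho(3\rho^2-\rho-4)(3\rho^2-\rho-2)}{8(\rho-2)}$ --- is entirely absent.

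One specific point where the sketch is shaky rather than merely incomplete: your explanation of the r\^ole of the hypothesis $\gcd(\multiplicityoper,\phi)=1$ is not convincing. You say it rules out the Kneser subgroup obstruction because ``$\multiplicityoper$ shares no prime with $\rho$, so the atoms cannot be confined to a few cosets of a proper subgroup''. But the proper subgroups of $\mathbb{Z}/\multiplicityoper$ correspond to divisors of $\multiplicityoper$, not of $\rho$; coprimality of $\multiplicityoper$ with the radical of $\rho$ says nothing direct about whether the atom residues generate $\mathbb{Z}/\multiplicityoper$ or lie in a small union of cosets. In the actual Moscariello--Sammartano argument the coprimality condition enters for a different, more delicate reason tied to their specific counting, and you would need to identify that mechanism rather than invoke a generic Cauchy--Davenport/Kneser statement. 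Until that link is made precise, the ``arithmetic part'' of your engine does not run.
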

The multiplicities of the semigroups that arise from Proposition~\ref{prop:very-large-mult} are large, as the following \textsf{GAP} session suggests (by showing that for $\rho=4$ the smallest multiplicity is $1680$).
\begin{verbatim}
gap> mult := r -> (r*(3*r^2-r-4)*(3*r^2-r-2))/8*(r-2);
function( r ) ... end
gap> mult(4);
1680
\end{verbatim}

\subsection{Semigroups with large multiplicity (compared to the conductor)}\label{sec:large-mult}
It was proved by Kaplan~\cite[Th.~24]{Kaplan2012JPAA-Counting} that numerical semigroups with conductor not greater than twice the multiplicity are Wilf. This result was generalized by Eliahou. One of the ingredients he used is a theorem of Macaulay on the growth of Hilbert functions of standard graded algebras.
In fact, he proved the following:

\begin{proposition}[{\cite[Th.~6.4]{Eliahou2018JEMS-Wilfs}}]\label{prop:large-mult-Eliahou-jems}
	Let $S$ be a numerical semigroup with $\conductoroper(S)\le 3\multiplicityoper(S)$. Then $S$ has nonnegative Eliahou number.
\end{proposition}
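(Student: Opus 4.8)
The plan is to split into cases according to the depth, using that the hypothesis $\conductoroper(S)\le 3\multiplicityoper(S)$ is precisely $\qoper(S)\le 3$. Write $\multiplicityoper=\multiplicityoper(S)$, $\conductoroper=\conductoroper(S)$, $\qoper=\qoper(S)$, and partition $\mathbb{N}$ into the blocks $B_{j}=[j\multiplicityoper,(j+1)\multiplicityoper-1]$. Three elementary facts are used throughout: $S\cap B_{0}=\{0\}$; every element of $S\cap B_{1}$ is primitive, since a sum of two nonzero elements of $S$ is at least $2\multiplicityoper$; and every primitive of $S$ lies in $[\multiplicityoper,\conductoroper+\multiplicityoper-1]$, since any integer $\ge\conductoroper+\multiplicityoper$ equals $\multiplicityoper$ plus an element of $S$. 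More generally, for $j\ge 2$ an element of $S\cap B_{j}$ fails to be primitive exactly when it is a sum of two elements of $S$ lying in lower blocks; hence the non-primitives of $S$ inside the threshold interval are governed by sumsets of the residue slices $A_{j}:=\{s-j\multiplicityoper\colon s\in S\cap B_{j}\}\subseteq[0,\multiplicityoper-1]$, which satisfy $\{0\}=A_{0}\subseteq A_{1}\subseteq A_{2}\subseteq\cdots$.

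The cases $\qoper\le 2$ should fall to an elementary sumset estimate. If $\qoper\le 1$ then either $S=\mathbb{N}$ or $\conductoroper=\multiplicityoper$ (so $S=\langle\multiplicityoper,\multiplicityoper+1,\dots,2\multiplicityoper-1\rangle$), and in both cases $\eliahouoper(S)=0$. If $\qoper=2$, so $\multiplicityoper<\conductoroper\le 2\multiplicityoper$, then $\leftsoper=\{0\}\cup(S\cap[\multiplicityoper,\conductoroper-1])$ with $S\cap[\multiplicityoper,\conductoroper-1]\subseteq B_{1}$ consisting of primitives; putting $\ell=|S\cap[\multiplicityoper,\conductoroper-1]|$ gives $|\primitivesoper\cap\leftsoper|=\ell$ and $|\leftsoper|=1+\ell$. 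The threshold interval meets $B_{1}$ in a set of primitives and meets $B_{2}$ in $[2\multiplicityoper,\conductoroper+\multiplicityoper-1]$, and a short check shows that every non-primitive of $S$ in $[2\multiplicityoper,\conductoroper+\multiplicityoper-1]$ is a sum $a+b$ with $a,b\in S\cap[\multiplicityoper,\conductoroper-1]$, whence $|D|\le\binom{\ell+1}{2}$. Then
\[
\eliahouoper(S)=\ell(1+\ell)-2|D|+2\multiplicityoper-\conductoroper\;\ge\;\ell(\ell+1)-2\binom{\ell+1}{2}+2\multiplicityoper-\conductoroper\;=\;2\multiplicityoper-\conductoroper\;\ge\;0.
\]

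The case $\qoper=3$ is the heart of the matter; now $2\multiplicityoper<\conductoroper\le 3\multiplicityoper$ and $\leftsoper$ occupies $B_{0}\cup B_{1}\cup B_{2}$. I would set $\ell_{1}=|S\cap B_{1}|$, $\ell_{2}=|S\cap[2\multiplicityoper,\conductoroper-1]|$, let $\pi$ be the number of primitives of $S$ in $[2\multiplicityoper,\conductoroper-1]$, and let $\delta_{2},\delta_{3}$ be the numbers of non-primitives of $S$ in the two pieces $[\conductoroper,3\multiplicityoper-1]$ and $[3\multiplicityoper,\conductoroper+\multiplicityoper-1]$ of the threshold interval. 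Then $|\primitivesoper\cap\leftsoper|=\ell_{1}+\pi$, $|\leftsoper|=1+\ell_{1}+\ell_{2}$, $|D|=\delta_{2}+\delta_{3}$, so
\[
\eliahouoper(S)=(\ell_{1}+\pi)(1+\ell_{1}+\ell_{2})-3(\delta_{2}+\delta_{3})+3\multiplicityoper-\conductoroper .
\]
As in the previous case, the number $(\ell_{2}-\pi)+\delta_{2}$ of non-primitives of $S$ in $B_{2}$ equals $|(A_{1}+A_{1})\cap[0,\multiplicityoper-1]|\le\binom{\ell_{1}+1}{2}$, which controls the $B_{2}$-part of $|D|$; but $\delta_{3}$, the number of non-primitives of $S$ in $B_{3}$ inside the threshold, is governed by the sumsets $(A_{1}+A_{1})\cap[\multiplicityoper,2\multiplicityoper-2]$ and $(A_{1}+A_{2})\cap[0,\multiplicityoper-1]$, and when $\conductoroper$ is close to $3\multiplicityoper$ it cannot be bounded in terms of $|A_{1}|$ alone. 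The missing ingredient is a constraint on how fast the slices may grow, equivalently on the Hilbert function $1=h_{0},\,h_{1}=\embeddingdimensionoper(S),\,h_{2},\,h_{3},\dots$ of the associated graded ring (tangent cone) $\mathrm{gr}_{\mathfrak{m}}(k[[t^{s}\colon s\in S]])$: Macaulay's theorem forces $h_{j+1}\le h_{j}^{\langle j\rangle}$, and feeding this restriction into the displayed expression for $\eliahouoper(S)$, after splitting into the few subcases determined by how the threshold interval straddles $B_{2}$ and $B_{3}$, yields $\eliahouoper(S)\ge 0$.

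The step I expect to be the main obstacle is exactly this last, somewhat technical part of the $\qoper=3$ case: checking that the worst configuration compatible with the Macaulay inequality still has nonnegative Eliahou number. This is also where the hypothesis is essentially sharp, since the same strategy breaks down for $\qoper\ge 4$, in line with the existence of numerical semigroups of negative Eliahou number (Remark~\ref{rem:non-eliahou}).
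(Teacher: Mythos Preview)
Your plan mirrors Eliahou's approach, which is all the survey records: it does not give its own proof but only notes that ``one of the ingredients he used is a theorem of Macaulay on the growth of Hilbert functions of standard graded algebras.'' The case split by depth and your treatment of $\qoper\le 2$ are correct; in particular the bound $|D|\le\binom{\ell+1}{2}$ and the resulting cancellation $\ell(\ell+1)-2\binom{\ell+1}{2}=0$ are exactly what is needed there.

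There is, however, a genuine gap in your $\qoper=3$ setup. You assert that controlling the growth of the slices $|A_j|$ is ``equivalently'' controlling the Hilbert function of the tangent cone $\mathrm{gr}_{\mathfrak m}(k[[S]])$, but these two sequences are different. The tangent cone grades $t^s$ by the maximal factorization length of $s$ (the largest $k$ with $s\in\mathfrak m^k$), not by the block index $\lfloor s/\multiplicityoper\rfloor$. Already at level~$1$ they disagree: $h_1=\embeddingdimensionoper(S)$ counts all primitives, whereas $|A_1|$ counts only those in $B_1$. Concretely, for $S=\langle 4,9\rangle$ one has $|A_1|=1$, $|A_2|=2$, while the tangent cone gives $h_1=2$ and $h_2=|\{8,13,18\}|=3$. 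So the Macaulay bound $h_{j+1}\le h_j^{\langle j\rangle}$ for the tangent cone does not speak to the slice sizes, and ``feeding this restriction into the displayed expression'' would not close the argument as written.

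What Eliahou actually does is apply Macaulay's theorem to a different standard graded object, built combinatorially from the Ap\'ery set and the left primitives, whose Hilbert function \emph{is} tailored to count decomposables level by level; once the correct algebra is in hand, the $\qoper=3$ verification proceeds along the case analysis you outline. Your instinct that this step is ``the main obstacle'' is right, and identifying the appropriate graded algebra is precisely the missing piece.
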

This result combined with Proposition~\ref{prop:eliahou-implies-wilf} leads to the following major fact. (Its importance can be better appreciated by seeing the comments that follow the statement.)

\begin{theorem}[{\cite[Cor.~6.5]{Eliahou2018JEMS-Wilfs}}]\label{th:large-mult-Wilf-jems}
  Let $S$ be a numerical semigroup with $\conductoroper(S)\le 3\multiplicityoper(S)$. Then $S$ is Wilf.
\end{theorem}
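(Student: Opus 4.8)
The plan is to deduce Theorem~\ref{th:large-mult-Wilf-jems} at once from what is already available: by Proposition~\ref{prop:large-mult-Eliahou-jems} the hypothesis $\conductoroper(S)\le 3\multiplicityoper(S)$ yields $\eliahouoper(S)\ge 0$, and by Proposition~\ref{prop:eliahou-implies-wilf} nonnegativity of the Eliahou number forces $\wilfoper(S)\ge 0$; concatenating the two shows that $S$ is Wilf, with nothing further to verify. Thus the genuine content lies entirely in Proposition~\ref{prop:large-mult-Eliahou-jems}, and it is that statement---every $S$ with $\conductoroper(S)\le 3\multiplicityoper(S)$ has $\eliahouoper(S)\ge 0$---that I would actually prove.

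For that, write $\multiplicityoper=\multiplicityoper(S)$ and observe that the hypothesis says precisely that the depth $\qoper(S)$ is at most $3$; the cases $\qoper\le 2$ (that is, $\conductoroper\le 2\multiplicityoper$) are comparatively easy---for the Wilf conclusion this goes back to Kaplan---so the crux is $\qoper=3$, where $2\multiplicityoper<\conductoroper\le 3\multiplicityoper$. Here I would cut the interval $\{0,1,\dots,3\multiplicityoper-1\}$, which contains $\leftsoper(S)$, into the three length-$\multiplicityoper$ blocks $B_0=\{0,\dots,\multiplicityoper-1\}$, $B_1=\{\multiplicityoper,\dots,2\multiplicityoper-1\}$, $B_2=\{2\multiplicityoper,\dots,3\multiplicityoper-1\}$, and record the structural facts that make the argument go: $\leftsoper\cap B_0=\{0\}$; every element of $S$ in $B_1$ is a minimal generator, since any proper factorization would need a nonzero summand below $\multiplicityoper$, so $\leftsoper\cap B_1=\primitivesoper(S)\cap B_1$; and every element of $S$ in $B_2$ is either a minimal generator or a sum of two minimal generators taken from $B_1$ (three or more summands would exceed $3\multiplicityoper$). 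Shifting $B_j$ down by $j\multiplicityoper$ turns $\primitivesoper(S)\cap B_1$ into a set $P'\subseteq\{0,\dots,\multiplicityoper-1\}$ with $0\in P'$, and the ``sum'' part of $S\cap B_2$ into $(P'+P')\cap\{0,\dots,\multiplicityoper-1\}$. Every quantity entering the Eliahou number---$|\primitivesoper\cap\leftsoper|$, $|\leftsoper|$, $|D|$, $\conductoroper$---can then be written in terms of $|P'|$, the size of $P'+P'$ in its low range, the number of minimal generators of $S$ in $B_2$, and the exact location of $\conductoroper$ inside $B_2$.

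With this dictionary in place, the desired inequality $\eliahouoper(S)\ge 0$ reduces to a lower bound on how many distinct sums $P'+P'$ keeps below $\multiplicityoper$, measured against $|P'|$---equivalently, a lower bound on the degree-two component of the tangent cone (associated graded ring) of the semigroup ring in terms of its degree-one component. This is exactly where I would bring in Macaulay's theorem on the growth of Hilbert functions of standard graded algebras, in the sharpened form that exploits the fact that the graded object actually arises from a numerical semigroup of depth $3$ (which rules out the otherwise-possible collapse of the degree-two part); I expect this to be the main obstacle. What remains is bookkeeping: a case distinction according to whether $\conductoroper$ lands early or late in $B_2$, whether $S$ has minimal generators in $B_2$, and the degenerate small configurations (very few primitives, or small $\multiplicityoper$), arranged so that the Macaulay estimate delivers $\eliahouoper(S)\ge 0$ and hence, through Proposition~\ref{prop:eliahou-implies-wilf}, $\wilfoper(S)\ge 0$.
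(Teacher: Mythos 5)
Your deduction is exactly the paper's own proof: Theorem~\ref{th:large-mult-Wilf-jems} is obtained there by combining Proposition~\ref{prop:large-mult-Eliahou-jems} with Proposition~\ref{prop:eliahou-implies-wilf}, nothing more. Your further sketch of how Proposition~\ref{prop:large-mult-Eliahou-jems} is established (block decomposition of $\{0,\dots,3\multiplicityoper-1\}$ plus Macaulay's theorem on Hilbert function growth) is not carried out in the survey, which only cites Eliahou's paper, but it correctly reflects the strategy of that source.
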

\subsubsection*{Some comments}
\begin{enumerate}
	\item Denote by $e(g)$ the number of numerical semigroups of genus $g$ having positive Eliahou number. Combining Proposition~\ref{prop:large-mult-Eliahou-jems} with Zhai's Proposition~\ref{prop:Zhao-Zhai}  one sees that $\lim_{g\to\infty} \frac{e(g)}{N(g)}=1.$ Thus, in the sense given by this limit, one can say that asymptotically, as the genus grows, all numerical semigroups have nonnegative Eliahou number. Consequently, asymptotically, as the genus grows, all numerical semigroups are Wilf.
    \item I observe that, despite this asymptotic result concerning Eliahou numbers, there are infinitely many numerical semigroups with negative Eliahou number (see \cite{Delgado2018MZ-question,EliahouFromentin2018SF-misses}). All the examples given in the mentioned papers are Wilf semigroups (some of them appear in Section~\ref{sec:sgps-given-sets-generators}).
\end{enumerate}

\subsection{Considering unusual invariants}\label{sec:ratio}

The second smallest primitive is sometimes called the \emph{ratio} (see~\cite[Exercise~2.12]{RosalesGarcia2009Book-Numerical}).

Spirito proved that if the ratio is large and the multiplicity is bounded by a quadratic function of the embedding dimension, then $S$ is Wilf. He also proved various related statements. As an illustration, I choose one that is rather explicit:

\begin{proposition}[{\cite[Prop~4.6]{Spirito2017ae-Wilfs}}]\label{prop:ratio}
	Let $S$ be a numerical semigroup with ratio $r$ and embedding dimension $\embeddingdimensionoper \ge 10$. If
	\begin{equation}\label{eq:prop-ratio}
	r> \frac{\conductoroper(S)+\multiplicityoper(S)}{3} \text{ and } \multiplicityoper(S)\le \frac{8}{25}\embeddingdimensionoper^2+\frac{1}{5}\embeddingdimensionoper-\frac{5}{4}
	\end{equation}
	then $S$ is a Wilf semigroup.
\end{proposition}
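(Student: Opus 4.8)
The plan is to establish the Wilf inequality in the equivalent form $\embeddingdimensionoper\,\lvert\leftsoper\rvert\ge\conductoroper$ after stripping off several boundary regimes. If $S=\mathbb{N}$ there is nothing to prove; if $3\embeddingdimensionoper(S)\ge\multiplicityoper(S)$ apply Theorem~\ref{th:large-embedding-dim-3}; if $\multiplicityoper(S)\le 16$ apply Proposition~\ref{prop:mult-16}; and if the depth satisfies $\qoper(S)\le 3$, i.e.\ $\conductoroper(S)\le 3\multiplicityoper(S)$, apply Theorem~\ref{th:large-mult-Wilf-jems}. So from now on one may assume $3\embeddingdimensionoper<\multiplicityoper$, $\multiplicityoper\ge 17$ and $\qoper\ge 4$; together with the hypothesis $\multiplicityoper\le\tfrac{8}{25}\embeddingdimensionoper^2+\tfrac15\embeddingdimensionoper-\tfrac54$ this confines $\embeddingdimensionoper$ to a narrow band above $10$. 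For $0\le k\le\qoper$ put $b_k=\lvert S\cap[k\multiplicityoper,(k+1)\multiplicityoper)\rvert$. These satisfy $b_0=1$, $b_k\le b_{k+1}$ (add $\multiplicityoper$ to a band element), $b_k=\multiplicityoper$ for $k\ge\qoper$, and a direct count of $S\cap[0,\conductoroper)$ band by band gives the identity $\lvert\leftsoper\rvert=\sum_{k=0}^{\qoper-1}b_k-(\qoper\multiplicityoper-\conductoroper)$.

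Second I would record the structure forced by $r$ being the second smallest primitive and by $r>(\conductoroper+\multiplicityoper)/3$. (i) No two minimal generators are congruent mod $\multiplicityoper$, since $p'\equiv p$ with $p'>p$ would give $p'=p+k\multiplicityoper\in S$, not minimal; hence the $\embeddingdimensionoper$ primitives occupy $\embeddingdimensionoper$ distinct residues, each non-$\multiplicityoper$ primitive $p$ lies in $\Aperyoper(S,\multiplicityoper)$ (else $p-\multiplicityoper\in S$), and its class has Apéry representative $w_i=p$. (ii) Any element of $S$ below $r$ is a multiple of $\multiplicityoper$, because a representation using any primitive other than $\multiplicityoper$ already has value $\ge r$; consequently $b_k=1$ whenever $(k+1)\multiplicityoper\le r$, and the Apéry representative $w_i$ of any class $i\neq 0$ satisfies $w_i\ge r$. (iii) Since $3r>\conductoroper+\multiplicityoper$, any element of $S$ smaller than $\conductoroper+\multiplicityoper$ involves at most two primitives different from $\multiplicityoper$; as the threshold interval $[\conductoroper,\conductoroper+\multiplicityoper)$ lies in $S$ and meets every residue, every residue mod $\multiplicityoper$ is the class of $0$, of a primitive, or of a sum $p_i+p_j$ of two primitives. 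In particular, a class $i\neq 0$ that is not the class of any primitive has $w_i\ge 2r>\tfrac23(\conductoroper+\multiplicityoper)$, and since $\qoper\ge 4$ forces $\tfrac23(\conductoroper+\multiplicityoper)<\conductoroper$, such a $w_i$ still lies in $[\,\tfrac23(\conductoroper+\multiplicityoper),\conductoroper)$ when $w_i<\conductoroper$.

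The third step is the estimate of $\lvert\leftsoper\rvert=\sum_{i:\,w_i<\conductoroper}\lceil(\conductoroper-w_i)/\multiplicityoper\rceil$, organised by the trichotomy in (iii). Class $0$ contributes $\qoper$; each primitive $p\neq\multiplicityoper$ with $p<\conductoroper$ contributes $\lceil(\conductoroper-p)/\multiplicityoper\rceil$ to its own class, equivalently by (ii) the band count obeys $b_k\ge 1+\#\{\text{primitives}\neq\multiplicityoper\text{ in bands}\le k\}$, so primitives that cover many residue classes also inflate the tail bands; and every "sum-type" residue with $w_i<\conductoroper$ contributes at least $1$. Now there are at least $\multiplicityoper-\embeddingdimensionoper$ residues not covered by a single primitive, each a residue of some $p_i+p_j<\conductoroper+\multiplicityoper$; since $p_i+p_j\ge 2r>\tfrac23(\conductoroper+\multiplicityoper)$, these sums come from the $\le\binom{\embeddingdimensionoper}{2}$ pairwise sums of the primitives lying in $[\,r,\tfrac23(\conductoroper+\multiplicityoper))$, and such primitives sit in bands between roughly $\qoper/3$ and $2\qoper/3$, hence each contributes of order $\qoper/3$ to $\lvert\leftsoper\rvert$. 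Feeding these into $\embeddingdimensionoper\,\lvert\leftsoper\rvert\ge\conductoroper$ with $\conductoroper\le\qoper\multiplicityoper$ should close the argument: when the primitives are spread through $[\,r,\conductoroper)$ the band inequality yields $\sum_k(b_k-1)$ of order $\embeddingdimensionoper\qoper$, hence $\embeddingdimensionoper\,\lvert\leftsoper\rvert$ of order $\embeddingdimensionoper^2\qoper\ge\multiplicityoper\qoper$; when they cluster near $\conductoroper$, covering the $\multiplicityoper-\embeddingdimensionoper$ missing residues by sums below $\conductoroper+\multiplicityoper$ forces enough small primitives that the $\qoper/3$-sized contributions again push $\embeddingdimensionoper\,\lvert\leftsoper\rvert$ past $\conductoroper$.

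The main obstacle is turning that dichotomy into a single clean inequality with the sharp constant $8/25$: the crude bounds above (each in-window primitive contributing $\gtrsim\qoper/3$, missing residues counted against $\binom{\embeddingdimensionoper}{2}$) are too lossy, and one genuinely needs to combine the monotonicity $b_k\le b_{k+1}$, the per-band bound $b_k\ge 1+c_k$ with $c_k$ counting earlier primitives, and a refined count of how many distinct residues pairwise sums of the low primitives can cover without colliding — then optimise the resulting expression over $\conductoroper\in((\qoper-1)\multiplicityoper,\qoper\multiplicityoper]$. I would try to write the target inequality so that it is affine (or convex) in $\conductoroper$ on that interval, reducing to its two endpoints, treat $\qoper$ as a parameter, and mop up the residual corners (small $\qoper$, $\embeddingdimensionoper$ close to $\multiplicityoper$, $\multiplicityoper\le 16$) with Theorem~\ref{th:large-mult-Wilf-jems}, Proposition~\ref{wilf-particular:large-embedding-dim}, Theorem~\ref{th:large-embedding-dim-3} and Proposition~\ref{prop:mult-16}; the precise value $\tfrac{8}{25}\embeddingdimensionoper^2+\tfrac15\embeddingdimensionoper-\tfrac54$ is expected to be exactly the threshold at which the "spread" and "clustered" estimates meet.
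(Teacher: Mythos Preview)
The paper is a survey and does not prove Proposition~\ref{prop:ratio}; it merely quotes the statement from Spirito and adds a remark that the constraint $\embeddingdimensionoper\ge 10$ can be dropped via Theorem~\ref{th:large-embedding-dim-3}. So there is no ``paper's own proof'' to compare against, and the question reduces to whether your argument stands on its own.

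It does not, and you say so yourself. The structural observations in your second paragraph are correct and are indeed the heart of Spirito's approach: the hypothesis $3r>\conductoroper+\multiplicityoper$ forces every nonzero Ap\'ery element (each lies in $[r,\conductoroper+\multiplicityoper)$) to be either a single primitive or a sum of two primitives, and this trichotomy on residue classes is what makes the left-element count tractable. But your third and fourth paragraphs never actually carry out that count. Phrases like ``of order $\embeddingdimensionoper\qoper$'', ``roughly $\qoper/3$'', and ``should close the argument'' are heuristics, not inequalities; and you explicitly concede that the crude bounds you have written down are ``too lossy'' to reach the constant $8/25$. The proposed remedy---optimise over $\conductoroper$ on an interval, treat $\qoper$ as a parameter, balance a ``spread'' estimate against a ``clustered'' one---is a description of a strategy, not an execution of it. In particular, you never exhibit a single inequality chain that starts from the hypotheses and ends at $\embeddingdimensionoper\lvert\leftsoper\rvert\ge\conductoroper$.

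To turn this into a proof you must replace the asymptotic language by exact inequalities. Concretely: let $\ell$ be the number of primitives in $\leftsoper$ (so $\ell\le\embeddingdimensionoper$), bound $\lvert\leftsoper\rvert$ below by summing $\lceil(\conductoroper-w_i)/\multiplicityoper\rceil$ over the Ap\'ery set, use $w_i\ge r$ for primitive-type classes and $w_i\ge 2r$ for sum-type classes together with $r>(\conductoroper+\multiplicityoper)/3$ to get explicit lower bounds on each summand, and then confront the resulting expression with $\conductoroper\le\qoper\multiplicityoper$ and the quadratic hypothesis on $\multiplicityoper$. The coefficient $8/25$ arises from a genuine optimisation in Spirito's paper (balancing how many primitives lie below $\conductoroper$ against how many sum-type residues they can cover); until you perform that optimisation, what you have is a plausible outline rather than a proof.
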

\begin{remark}
	It is straightforward to check that that if $d\le 9$, then $8/25d^2 + 1/5d - 5/4\le 3d$. The following \textsf{GAP} session may help to quickly convince the reader:
	\begin{verbatim}
		gap> f := d -> 8/25*d^2 + 1/5*d - 5/4;
		gap> Int(f(9));
		26
	\end{verbatim} 
	One concludes by using Theorem~\ref{th:large-embedding-dim-3} that the restriction $\embeddingdimensionoper \ge 10$ can be removed from the statement of Proposition~\ref{prop:ratio}.	Moreover, when $\embeddingdimensionoper < 10$ there is no need to impose any restriction on the ratio.
\end{remark}

\subsection{Families described through one invariant}
Families of semigroups described by limiting the multiplicity of its members were already considered in Section~\ref{sec:small-multiplicity}. Proposition~\ref{prop:mult-16} could have been stated in this subsection, as well as Proposition~\ref{prop:L<=12} which refers to the number of left elements.

Dobbs and Matthews~\cite[Th.~2.11]{DobbsMatthews2006} proved that numerical semigroups with $\left|\leftsoper\right|\le 4$ are Wilf semigroups. As a corollary they obtained that semigroups with $\conductoroper\le 21$ are Wilf. Eliahou~\cite[Prop.~7.4]{Eliahou2018JEMS-Wilfs} observed that numerical semigroups with less than $7$ left elements are Wilf.
These results have been largely superseded. 

Recall that Proposition~\ref{prop:L<=12} gives a similar result, but the restriction on the number of left elements was weakened: numerical semigroups with $\left|\leftsoper\right|\le 12$ are Wilf.
\medskip

Let $S$ be a non Wilf semigroup. By Proposition~\ref{prop:mult-16}, $\multiplicityoper(S)\ge 17$. Using Proposition~\ref{prop:large-mult-Eliahou-jems}, which guarantees that non Wilf semigroups satisfy $\conductoroper(S)>3\multiplicityoper(S)$, one gets that $\conductoroper(S)>54$. 
This proves that semigroups with conductor smaller than $53$ are Wilf. 
\medskip

Fromentin and Hivert, through exhaustive computation, have shown that there are no non Wilf semigroups with genus smaller than $61$. The previous published record, genus $51$, had been obtained by Bras-Amorós\cite{Bras-Amoros2008SF-Fibonacci}.

\begin{theorem}[\cite{FromentinHivert2016MC-Exploring}]\label{th:wilf-particular-genus-60}
Every numerical semigroups of genus up to $60$ is Wilf.
\end{theorem}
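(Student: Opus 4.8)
The plan is to turn this into a systematic, if very large, finite computation organized around the tree of numerical semigroups. Recall that there is a rooted tree with root $\mathbb{N}$ in which the children of a semigroup $S$ are exactly the semigroups $S\setminus\{x\}$ where $x$ is a minimal generator of $S$ with $x>\Frobeniusoper(S)$; deleting such an effective generator increases the genus by exactly one, so the nodes at depth $g$ are precisely the numerical semigroups of genus $g$. Verifying Conjecture~\ref{conj:wilf} for all genera $\le 60$ therefore amounts to traversing this tree down to depth $60$ and checking $\wilfoper(S)\ge 0$ at every node. The effect of an edge is completely explicit: $\Frobeniusoper(S\setminus\{x\})=x$, so $\conductoroper(S\setminus\{x\})=x+1$; $\leftsoper(S\setminus\{x\})=\leftsoper(S)\cup\{\conductoroper(S),\dots,x-1\}$; and the minimal generating set loses $x$ and gains only elements of the form $x+s$ with $s\in S$. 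Hence $\lvert\primitivesoper\rvert$, $\lvert\leftsoper\rvert$, $\conductoroper$ and $\wilfoper$ can all be updated incrementally as one walks down an edge.

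The first point of the plan is that this traversal, though enormous (the deepest level alone has about $N(60)$ nodes, a number of order $10^{13}$), is feasible: one encodes each semigroup compactly (the characteristic vector of its small elements, or of its Apéry set with respect to the multiplicity), generates children in a canonical order so that every semigroup is produced exactly once, updates the invariants incrementally as above, and parallelizes over independent subtrees. The second point is that the per-node Wilf test is almost free, since most nodes are already known to be Wilf: Theorem~\ref{th:type<=3} handles embedding dimension $\le 3$, Theorem~\ref{th:large-embedding-dim-3} handles $3\embeddingdimensionoper\ge\multiplicityoper$, Proposition~\ref{prop:mult-16} handles $\multiplicityoper\le 16$, and Theorem~\ref{th:large-mult-Wilf-jems} handles $\conductoroper\le 3\multiplicityoper$. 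The third point is pruning of whole subtrees, to cut down what must actually be visited: a \emph{Wilf-budget lemma} bounding the decrease of $\wilfoper$ across a single edge in terms of $\lvert\leftsoper(S)\rvert$ (it can decrease, but only by a controlled amount, and it typically jumps upward because new minimal generators $x+s$ appear) shows that a node at depth $g$ whose Wilf number exceeds the cumulative admissible decrease over the remaining $60-g$ levels, together with all its descendants down to depth $60$, is automatically Wilf; and since $\conductoroper/\multiplicityoper$ grows by at most $1$ per edge, near the bottom of the tree one can also prune entire subtrees once that ratio is guaranteed to stay $\le 3$ all the way down (Theorem~\ref{th:large-mult-Wilf-jems}). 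What remains is a comparatively thin ``dangerous'' part of the tree on which $\wilfoper(S)\ge 0$ is verified by direct inspection. The conclusion is then that no numerical semigroup of genus $\le 60$ has negative Wilf number.

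I expect the main obstacle to be the computation itself: even after the reductions above, the number of semigroups that must be generated is vast, so the result depends on an implementation that is simultaneously frugal in memory, genuinely incremental, canonical (each semigroup produced once and only once), correct, and heavily parallel---and the theorem is only as trustworthy as the completeness of that enumeration. A secondary, analytic obstacle is proving the budget lemma with a bound sharp enough to make the pruning effective; this calls for a precise description of which new minimal generators $x+s$ appear when $x$ is deleted, since those are exactly what can keep $\wilfoper$ from collapsing as one descends the tree.
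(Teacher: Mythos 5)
Your plan is essentially the proof that exists: the survey gives no argument of its own for this theorem, it simply cites Fromentin and Hivert, whose result is exactly an exhaustive, parallelized, incrementally-updated depth-first traversal of the tree of numerical semigroups (children $S\setminus\{x\}$ for minimal generators $x>\Frobeniusoper(S)$) down to the stated genus, checking $\wilfoper(S)\ge 0$ at every node. Your extra pruning devices (the budget lemma, the known sufficient conditions) are sensible optimizations rather than departures, and your one substantive caveat --- that the theorem is only as trustworthy as the completeness and correctness of the enumeration --- is precisely the status of the actual result.
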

Since the genus of a numerical semigroup is not smaller than its conductor plus one, the following consequence, which supersedes the above results concerning the conductor, is immediate.
\begin{corollary}
	Semigroups whose conductor does not exceed $61$ are Wilf.
\end{corollary}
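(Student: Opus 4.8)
The plan is to deduce the corollary immediately from Theorem~\ref{th:wilf-particular-genus-60} by bounding the genus in terms of the conductor. The only ingredient needed is the identity $\genusoper(S)+\left|\leftsoper(S)\right|=\conductoroper(S)$ recorded in Section~\ref{sec:notation}, together with the observation that $0$ is a left element of any numerical semigroup different from $\mathbb{N}$.

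First I would dispose of the trivial case $S=\mathbb{N}$, i.e.\ $\conductoroper(S)=0$: here $\genusoper(S)=0$ and $\wilfoper(\mathbb{N})\ge 0$, so $S$ is Wilf. For every other numerical semigroup with $\conductoroper(S)\le 61$ one has $\left|\leftsoper(S)\right|\ge 1$, since $0\in\leftsoper(S)$, and therefore
\[
\genusoper(S)=\conductoroper(S)-\left|\leftsoper(S)\right|\le \conductoroper(S)-1\le 60 .
\]
At this point Theorem~\ref{th:wilf-particular-genus-60} applies and gives that $S$ is Wilf, which completes the argument.

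I do not expect any genuine obstacle: the entire weight of the statement rests on the cited exhaustive computation of Fromentin and Hivert over all numerical semigroups of genus at most $60$, and everything else is the one-line count above. Two remarks seem worth including. First, the value $61$ is exactly what this reasoning produces and cannot be improved to $62$ by a genus count alone, since the maximal embedding dimension semigroup $\langle 62,63,\dots,123\rangle$ has conductor $62$ and genus $61$ (it is nonetheless Wilf, for the independent reason that maximal embedding dimension semigroups are Wilf). Second, since every conductor bound quoted earlier in this subsection is at most $61$, this corollary indeed supersedes all of them.
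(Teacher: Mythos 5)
Your proof is correct and is essentially the paper's own argument: bound the genus by $\conductoroper-1$ (since $0\in\leftsoper$ for $S\ne\mathbb{N}$) and invoke Theorem~\ref{th:wilf-particular-genus-60}. The paper compresses this into the remark that ``the genus of a numerical semigroup is not smaller than its conductor plus one'' (evidently a garbled rendering of $\conductoroper\ge\genusoper+1$, i.e.\ $\genusoper\le\conductoroper-1$), so your version merely spells out, correctly, what the paper leaves implicit.
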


(I am currently developing techniques to replace in Theorem~\ref{th:wilf-particular-genus-60} the integer $60$ by a larger one.
It will probably be part of an experimental preprint of mine which is in an advanced phase of preparation 
and is provisionally entitled ''Wilf's conjecture on numerical semigroups holds for small genus''.)

\section{Quasi-generalization}\label{sec:quasi-generalization}
Denote by $\mathfrak{S}$ the class of all numerical semigroups. Let $\mathfrak{W}=\{S\in\mathfrak{S}\mid \wilfoper(S)\ge 0\}$ and let $\mathfrak{E}=\{S\in\mathfrak{S}\mid \eliahouoper(S)\ge 0\}$. 

Whether $\mathfrak{S}=\mathfrak{W}$ is presently not known (Wilf's conjecture says that the equality holds, but it is still a conjecture).
That $\mathfrak{E}\subseteq \mathfrak{W}$ follows from Proposition~\ref{prop:eliahou-implies-wilf}, and up to genus $60$ there are exactly $5$ numerical semigroups not in $\mathfrak{E}$, thus showing that the inclusion is strict (the examples were obtained by Fromentin and appear in~\cite[pgs~2112,2113]{Eliahou2018JEMS-Wilfs}).
The following is a consequence of Remark~\ref{rem:non-eliahou}.
\begin{fact}\label{fact:infinite-non-eliahou}
	$\mathfrak{W}\setminus \mathfrak{E}$ is infinite.
\end{fact}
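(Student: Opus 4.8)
The plan is to exhibit an explicit infinite subfamily of $\mathfrak{W}\setminus\mathfrak{E}$ drawn from the constructions recalled in Remark~\ref{rem:non-eliahou}. First I would take the semigroups $S(p)=\langle \mu,\gamma,\gamma+1\rangle_{p\mu}$ of Proposition~\ref{prop:S(q)}, one for each even positive integer $p$. On the one hand, Proposition~\ref{prop:S(q)} gives $\wilfoper(S(p))>0$, so $S(p)\in\mathfrak{W}$; on the other hand, Remark~\ref{rem:non-eliahou} records that the Eliahou number $\eliahouoper(S(p))$ is negative, so $S(p)\notin\mathfrak{E}$. Hence $S(p)\in\mathfrak{W}\setminus\mathfrak{E}$ for every such $p$.

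Next I would check that infinitely many distinct semigroups arise in this way, so that the family is genuinely infinite. Since $\mu(p)=\tfrac{p^2}{4}+2p+2$ is strictly increasing for $p>0$, the conductor $\conductoroper(S(p))=p\mu(p)$ is strictly increasing in $p$; distinct even values of $p$ therefore yield semigroups with distinct conductors, hence distinct semigroups. This already shows that $\mathfrak{W}\setminus\mathfrak{E}$ is infinite. (One could equally use the family $\langle\{m\}\cup A\rangle_{4m}$ of Proposition~\ref{prop:misses}, each member of which satisfies $\wilfoper\ge 9$ and, by Remark~\ref{rem:non-eliahou}, has negative Eliahou number, while $m$ — and hence the conductor $4m$ — may be taken arbitrarily large.)

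I do not expect a genuine obstacle here: the substantive content, namely that the explicit counting in~\cite{Delgado2018MZ-question} (respectively~\cite{EliahouFromentin2018SF-misses}) produces a nonnegative Wilf number together with a negative Eliahou number, is already carried out in those references and is summarised in Remark~\ref{rem:non-eliahou}, which we are allowed to assume. The only step that needs a line of its own is the infinitude of the chosen family, and this is immediate from the unboundedness of the conductor along it.
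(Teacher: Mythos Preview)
Your proposal is correct and follows exactly the route the paper takes: the paper simply declares the fact to be a consequence of Remark~\ref{rem:non-eliahou}, and you have faithfully unpacked that remark by exhibiting the family $S(p)$ (with the alternative $\langle\{m\}\cup A\rangle_{4m}$), invoking Proposition~\ref{prop:S(q)} for membership in $\mathfrak{W}$, Remark~\ref{rem:non-eliahou} for exclusion from $\mathfrak{E}$, and checking that the conductors are unbounded so the family is infinite.
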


Let $\mathcal{P}$ be a property (about numerical semigroups). For instance, ``$\embeddingdimensionoper\ge 3$'' is such a property.
Let $\mathfrak{P}=\{S\in\mathfrak{S}\mid S\models \mathcal{P}\}$ be the class of numerical semigroups satisfying $\mathcal{P}$. With this notation, most results in the previous sections can be written in the following form: ``If $S\in\mathfrak{P}$, then $S\in \mathfrak{W}$.'', or ``If $S$ satisfies $\mathcal{P}$, then $S$ is Wilf''. 

I invite the reader to think an all the results as if they had been written in this form. Some properties cannot be as nicely written as in the above example (``$\embeddingdimensionoper\ge 3$''). However, for instance, the statement ``We say that $S$ satisfies property $\mathcal{P}$ if and only if $S$ is of the form $S(p)$, with $p$ an even positive integer.'' allows to write Proposition~\ref{prop:S(q)} in the above form.

By doing so, one can associate a property to each result and conversely. 
Although I do not intend to explicitly give names to all the properties corresponding to the results stated, there are some exceptions:
\begin{itemize}[leftmargin=*]
	\item[]$\mathbf{\mathcal{D}_3}$ stands for the property ``$\embeddingdimensionoper\ge 3$'', which is associated to Theorem~\ref{th:type<=3}. The corresponding class of semigroups is $\mathfrak{D}_3$. 
\end{itemize} 
	Similarly, one has the correspondences:
\begin{itemize}[noitemsep,leftmargin=*]
	\item[]$\mathbf{\mathcal{D}}$ --- ``$3\embeddingdimensionoper\ge \multiplicityoper$'' --- Theorem~\ref{th:large-embedding-dim-3} --- $\mathfrak{D}$;
	\item[]$\mathbf{\mathcal{M}}$ --- ``$\conductoroper\le 3\multiplicityoper$'' --- Theorem~\ref{th:large-mult-Wilf-jems} --- $\mathfrak{M}$;
	\item[]$\mathbf{\mathcal{G}_{60}}$ --- ``$\genusoper \le 60$'' --- Theorem~\ref{th:wilf-particular-genus-60} --- $\mathfrak{G}_{60}$.
\end{itemize} 

It seems reasonable to add other exceptions: $\mathcal{S},\mathcal{W}, \mathcal{E}$ are the properties about numerical semigroups associated, respectively, to $\mathfrak{S},\mathfrak{W}, \mathfrak{E}$. (Note that $\mathcal{S}$ is trivial: it is satisfied by all numerical semigroups.)

\begin{fact}
	All the classes $\mathfrak{D}_3,\mathfrak{D},\mathfrak{M},\mathfrak{G}_{60}$ are strictly contained in $\mathfrak{W}$.
	Furthermore, for every $\mathfrak{P}\in \{\mathfrak{D}_3,\mathfrak{D},\mathfrak{M},\mathfrak{G}_{60}\}$, $ \mathfrak{W}\setminus \mathfrak{P}$ is infinite. 
\end{fact}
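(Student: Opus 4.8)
The plan is to establish the ``\emph{furthermore}'' assertion and to read off strict containment from it, since any class $\mathfrak{P}$ with $\mathfrak{P}\subseteq\mathfrak{W}$ and $\mathfrak{W}\setminus\mathfrak{P}\neq\emptyset$ is automatically strictly contained in $\mathfrak{W}$. The four inclusions $\mathfrak{D}_3\subseteq\mathfrak{W}$, $\mathfrak{D}\subseteq\mathfrak{W}$, $\mathfrak{M}\subseteq\mathfrak{W}$ and $\mathfrak{G}_{60}\subseteq\mathfrak{W}$ are precisely the statements of Theorems~\ref{th:type<=3}, \ref{th:large-embedding-dim-3}, \ref{th:large-mult-Wilf-jems} and \ref{th:wilf-particular-genus-60}, so nothing is left to prove for the first sentence once the second is in hand.

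To see that each difference is infinite I would use two elementary infinite families of Wilf semigroups, both of which already appear (implicitly) in the survey. The first is the family of half-line (maximal embedding dimension) semigroups $T_n=\langle n,n+1,\dots,2n-1\rangle$; each $T_n$ satisfies $\primitivesoper(T_n)=\{n,\dots,2n-1\}$, $\leftsoper(T_n)=\{0\}$ and $\conductoroper(T_n)=n$, so $\wilfoper(T_n)=n\cdot 1-n=0$ and $T_n$ is Wilf (this is also covered by Theorem~\ref{th:large-embedding-dim-3}, as $3\embeddingdimensionoper(T_n)=3n\ge n=\multiplicityoper(T_n)$). Since $\embeddingdimensionoper(T_n)=n\ge 4>3$ for $n\ge 4$, such $T_n$ fails the hypothesis of Theorem~\ref{th:type<=3} and hence $T_n\notin\mathfrak{D}_3$; and since the gaps of $T_n$ are $1,\dots,n-1$, the genus of $T_n$ is $n-1$, so $T_n\notin\mathfrak{G}_{60}$ for all $n\ge 62$. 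The second family consists of the embedding dimension $2$ semigroups $U_m=\langle m,m+1\rangle$, which are Wilf by Sylvester's formula (indeed $\wilfoper(U_m)=0$, as recalled in Section~\ref{sec:wilf-a2}); here $\embeddingdimensionoper(U_m)=2$, $\multiplicityoper(U_m)=m$ and $\conductoroper(U_m)=m(m-1)$, so $3\embeddingdimensionoper(U_m)=6<m$ gives $U_m\notin\mathfrak{D}$ once $m\ge 7$, and $m(m-1)>3m$ gives $U_m\notin\mathfrak{M}$ once $m\ge 5$. Hence $\{T_n\mid n\ge 4\}$, $\{T_n\mid n\ge 62\}$, $\{U_m\mid m\ge 7\}$ and $\{U_m\mid m\ge 7\}$ are infinite subsets of $\mathfrak{W}\setminus\mathfrak{D}_3$, $\mathfrak{W}\setminus\mathfrak{G}_{60}$, $\mathfrak{W}\setminus\mathfrak{D}$ and $\mathfrak{W}\setminus\mathfrak{M}$ respectively, which is exactly what was wanted. (As an alternative witness for $\mathfrak{M}$ one could instead use the family $S(p)=\langle\mu,\gamma,\gamma+1\rangle_{p\mu}$ of Proposition~\ref{prop:S(q)}, which is Wilf with conductor $p\mu>3\mu=3\multiplicityoper(S(p))$ for every even $p\ge 4$; see also Remark~\ref{rem:non-eliahou} and Fact~\ref{fact:infinite-non-eliahou}.)

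I do not expect any genuine obstacle here; the whole argument is bookkeeping. The one point deserving a moment's care is matching each chosen family to the condition it must violate --- in particular verifying that a single family $\{U_m\}$ simultaneously escapes both ``$3\embeddingdimensionoper\ge\multiplicityoper$'' and ``$\conductoroper\le 3\multiplicityoper$'' for $m$ large, while $\{T_n\}$ is needed to escape the embedding-dimension hypothesis of Theorem~\ref{th:type<=3} and is also convenient for escaping ``$\genusoper\le 60$''. Everything else is a one-line inequality check, and the two background facts used --- that embedding dimension $2$ semigroups and maximal embedding dimension semigroups are Wilf --- are already established in the excerpt.
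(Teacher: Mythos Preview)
Your argument is correct. For $\mathfrak{D}_3$, $\mathfrak{D}$ and $\mathfrak{G}_{60}$ the paper simply leaves the construction of examples to the reader, and your families $T_n$ and $U_m$ do the job cleanly. The one place where your route differs from the paper's is $\mathfrak{M}$: the paper argues indirectly, observing that $\mathfrak{M}\subseteq\mathfrak{E}$ by Proposition~\ref{prop:large-mult-Eliahou-jems} and then invoking Fact~\ref{fact:infinite-non-eliahou} to conclude that $\mathfrak{W}\setminus\mathfrak{M}\supseteq\mathfrak{W}\setminus\mathfrak{E}$ is infinite. Your direct use of $U_m=\langle m,m+1\rangle$ is more elementary --- it needs only Sylvester's formula rather than the Eliahou-number machinery of Sections~\ref{sec:sgps-given-sets-generators}--\ref{sec:large-mult} --- and you even flag the paper's approach as your parenthetical alternative. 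Either argument is fine here; yours has the virtue of being self-contained, while the paper's ties the fact back into the $\mathfrak{E}$-versus-$\mathfrak{W}$ theme that motivates the lattice in Figure~\ref{fig:lattice}.
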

\begin{proof}
	By Proposition~\ref{prop:large-mult-Eliahou-jems}, $\mathfrak{M}\subseteq \mathfrak{E}$. Thus $\mathfrak{W}\setminus\mathfrak{E}\subseteq\mathfrak{W}\setminus\mathfrak{M}$.
	Since, by Fact~\ref{fact:infinite-non-eliahou}, $\mathfrak{W}\setminus \mathfrak{E}$ is infinite, it follows that $ \mathfrak{W}\setminus \mathfrak{M}$ is infinite.
    The reader will have no difficulties in giving examples showing that also $\mathfrak{W}\setminus \mathfrak{D}_3$, $\mathfrak{W}\setminus \mathfrak{D}$ and $\mathfrak{W}\setminus \mathfrak{G}_{60}$ are infinite.
\end{proof}

In what follows I will define a partial order on properties (about numerical semigroups). It can be used to partially order classes of numerical semigroups, or even results taking into account the above correspondences. Note that I do not want to make any judgement on the results and even less on their proofs. It may well happen that the ideas involved in the proof of a given result will in the future have a greater impact than the ideas involved in a proof of one of its generalizations.

A property $\mathcal{P}$ is said to be a \emph{generalization} of a property $\mathcal{Q}$ if all the semigroups satisfying  $\mathcal{Q}$  also satisfy $\mathcal{P}$, that is, $\mathfrak{Q}\subseteq \mathfrak{P}$ (or $\mathfrak{Q}\setminus \mathfrak{P}$ is empty).
It is clear that a result that proves a generalization is better, but this can not be said in a definitive way when the arguments in the proofs are different.
Except in some obvious cases (such as happens in several subsections of Section~\ref{sec:particular-cases}), just comparing through set inclusion is not of great help.

A property $\mathcal{P}$ is a \emph{quasi generalization} of a property $\mathcal{Q}$ if all but finitely many numerical semigroups satisfying $\mathcal{Q}$  also satisfy $\mathcal{P}$, that is, $\mathfrak{Q}\setminus \mathfrak{P}$ is either empty or finite. 
It is straightforward to check that quasi-generalization is a partial order in the set of properties on numerical semigroups.
The notation $\mathcal{Q}\prec\mathcal{P}$ is used for ``$\mathcal{P}$ is a quasi generalization of~$\mathcal{Q}$''.
In symbols:\quad $\mathcal{Q}\prec\mathcal{P} \text{ if and only if } \left|\mathfrak{Q}\setminus \mathfrak{P}\right| < \infty$.

I am far from saying that properties that are quasi generalized by others are not important (even without taking the proofs into account). According to this definition, any property defining an infinite class of numerical semigroups quasi generalizes all the properties defining finite classes. For instance, the property $\mathcal{G}_{60}$ is quasi generalized by the properties associated to the results stated in previous section that define infinite classes. But none of these results generalizes $\mathcal{G}_{60}$ (as the reader can easily check), which, from my point of view, makes it a property of high interest.

I encourage anyone who finds a new property (such that all numerical semigroups in the class of semigroups satisfying that property are Wilf) to compare it with other properties for quasi generalization. Observing that it is not known any quasi generalization $\mathcal{P}$ of the property under consideration such that $\mathfrak{P}\subseteq\mathfrak{W}$ probably will count in favour of the results obtained.

 My aim now is to compare, for quasi generalization, the properties for which a name was given: $\mathcal{S},\mathcal{W}, \mathcal{E},\mathcal{D}_3,\mathcal{D},\mathcal{M},$ and $\mathcal{G}_{60}$. 
They do not form a chain, as it follows from next result. The impatient reader may already take a look at the lattice depicted in Figure~\ref{fig:lattice}.

\begin{proposition}\label{prop:M-D-non-comparable}
	$\mathcal{M}$ and $\mathcal{D}$ are not comparable under quasi generalization.
\end{proposition}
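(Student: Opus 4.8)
The plan is to exhibit two infinite families of numerical semigroups: one witnessing $\mathfrak{M}\setminus\mathfrak{D}$ infinite, and one witnessing $\mathfrak{D}\setminus\mathfrak{M}$ infinite. Since quasi generalization is defined by finiteness of the set difference, producing infinite families in each direction shows that neither $\mathcal{M}\prec\mathcal{D}$ nor $\mathcal{D}\prec\mathcal{M}$ holds, hence the two properties are incomparable.

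For the direction $\mathfrak{M}\setminus\mathfrak{D}$: I want semigroups satisfying $\conductoroper\le 3\multiplicityoper$ but with $3\embeddingdimensionoper<\multiplicityoper$, i.e.\ of large multiplicity, small depth (at most $3$), and small embedding dimension. The natural candidates are semigroups of depth $2$ or $3$ generated by very few elements. For instance, fix small embedding dimension and let the multiplicity grow: take $S_n=\langle n, n+1\rangle_{2n}$ (embedding dimension $2$, conductor $\conductoroper(S_n)=2n=2\multiplicityoper\le 3\multiplicityoper$, so $S_n\in\mathfrak{M}$), and for $n$ large enough $3\embeddingdimensionoper=6<n=\multiplicityoper$, so $S_n\notin\mathfrak{D}$; more carefully one should pick $n$ large enough that $\conductoroper(S_n)$ is genuinely $\le 3n$ and that $S_n$ is a numerical semigroup, both of which hold for all $n\ge 2$ by Sylvester's formula, giving $\conductoroper=n^2-n-1+1$ wait — I must recompute with the $\langle\cdot\rangle_{2n}$ truncation rather than $\langle n,n+1\rangle$; using the truncated semigroup $\langle n,n+1\rangle_{2n}$ the conductor is exactly $2n$ by construction, the multiplicity is $n$, and the embedding dimension is $\embeddingdimensionoper = 2 + |\{2n,\dots,3n-1\}\setminus\langle n,n+1\rangle|$, which is at most $n$; in any case $\embeddingdimensionoper = O(n)$ is far from forcing $3\embeddingdimensionoper\ge n$ once we instead use a family where the threshold interval contributes nothing — concretely $\langle n, n+1, n+2\rangle_{?}$ of depth $2$ has $\embeddingdimensionoper\le n$ but typically $\ll n/3$. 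The cleanest choice is in fact to invoke the family $S(p)$ of Proposition~\ref{prop:S(q)}: for $p=2$ we would need $\conductoroper=2\mu\le 3\mu$ — true — while $\embeddingdimensionoper=3$ and $\multiplicityoper=\mu(2)=7$, giving $9\not<7$; but replacing the generator set to keep embedding dimension $3$ while letting $\mu\to\infty$ via depth-$2$ semigroups $\langle \mu,\mu+a,\mu+b\rangle_{2\mu}$ with suitable $a,b$ does the job, since then $\conductoroper=2\mu\le 3\mu$ and $3\cdot 3=9<\mu$ for $\mu>9$. So this family is infinite and lies in $\mathfrak{M}\setminus\mathfrak{D}$.

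For the direction $\mathfrak{D}\setminus\mathfrak{M}$: I want semigroups with $3\embeddingdimensionoper\ge\multiplicityoper$ but $\conductoroper>3\multiplicityoper$, i.e.\ moderately large embedding dimension together with large depth. Take, for a parameter $q$ growing, a semigroup with multiplicity $\multiplicityoper=3e$ (so $3\embeddingdimensionoper=\multiplicityoper$ is tight) and conductor about $q\multiplicityoper$ with $q\ge 4$; for example $S=\langle \{3e\}\cup A\rangle_{q\cdot 3e}$ where $A$ is a set of $e-1$ suitably chosen elements of $\{3e+1,\dots,\text{something}\}$ guaranteeing the embedding dimension is exactly $e$ and that the truncation point $q\cdot 3e$ is the actual conductor (this requires that no integer in $\{3e,\dots,q\cdot 3e-1\}$ becomes a sum of earlier generators in a way that shrinks the conductor — arranging the generators to be large, near $q\cdot 3e/2$, handles this, analogously to the $B_3$-set construction in Proposition~\ref{prop:misses}). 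Then $\conductoroper=3qe>3\cdot 3e=3\multiplicityoper$ for $q\ge 4$, so $S\notin\mathfrak{M}$, while $3\embeddingdimensionoper=3e=\multiplicityoper$, so $S\in\mathfrak{D}$; letting $e$ and $q$ range over infinitely many values gives an infinite family in $\mathfrak{D}\setminus\mathfrak{M}$.

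The main obstacle is the bookkeeping in each construction to certify that the stated multiplicity, conductor, and embedding dimension are the actual invariants of the truncated semigroup: one must check that the chosen generators between $\multiplicityoper$ and the truncation point really are primitive (so the embedding dimension is as claimed) and that the truncation point $t$ in $\langle X\rangle_t$ is genuinely the conductor (i.e.\ $t-1\notin\langle X\rangle_t$ while $t,t+1,\dots$ all lie in it). For the $\mathfrak{M}\setminus\mathfrak{D}$ side this is essentially trivial since the semigroups have depth $2$ and very small generating sets. For the $\mathfrak{D}\setminus\mathfrak{M}$ side it is the more delicate point, but it can be settled by placing the $e-1$ extra generators in a short interval around $t/2$ and arguing, as in Eliahou--Fromentin, that sums of two or more of them land beyond $t$ and hence cannot create elements below $t$; one can even reuse Proposition~\ref{prop:misses}'s family verbatim (it has depth $4$, multiplicity $m$, embedding dimension $n$ with $4m$ as conductor) and simply restrict to instances with $3n\ge m$ and $4m>3m$ — the latter always true — which immediately yields infinitely many members of $\mathfrak{D}\setminus\mathfrak{M}$ provided the $B_3$-set can be chosen with $n-1\ge m/3-1$, which Proposition~3.1 of~\cite{EliahouFromentin2018SF-misses} permits for suitable $m$.
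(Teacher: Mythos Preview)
Your overall strategy is right --- exhibit infinite families in each set difference --- but the $\mathfrak{M}\setminus\mathfrak{D}$ direction contains a fatal error, and the $\mathfrak{D}\setminus\mathfrak{M}$ direction is needlessly convoluted.

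\textbf{The genuine gap.} Your chosen family $\langle \mu,\mu+a,\mu+b\rangle_{2\mu}$ does \emph{not} have embedding dimension $3$. The truncation at $2\mu$ forces every integer in the threshold interval $\{2\mu,\dots,3\mu-1\}$ into the semigroup, and each such integer that is not already a sum of two elements from $\{\mu,\mu+a,\mu+b\}$ becomes a new primitive. The only $2$-fold sums available are $2\mu,\,2\mu+a,\,2\mu+b,\,2\mu+2a,\,2\mu+a+b,\,2\mu+2b$ --- at most six values --- so for $\mu>6$ the embedding dimension is at least $\mu-3$, and hence $3\embeddingdimensionoper\ge 3(\mu-3)\ge\mu$: the semigroup lies in $\mathfrak{D}$ after all. (The same miscount undermines your earlier attempt $\langle n,n+1\rangle_{2n}$, whose embedding dimension is $n-1$, not $2$.) More generally, keeping $\conductoroper\le 3\multiplicityoper$ with a \emph{bounded} number of generators is impossible for large $\multiplicityoper$: with $\embeddingdimensionoper$ generators the $2$-fold sums cover at most $\binom{\embeddingdimensionoper+1}{2}$ residues, far short of the $\multiplicityoper$ needed in the threshold interval. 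This is exactly the obstruction the paper's construction is designed to overcome: it takes $Y=\{m,m+1,m+2,m+3\}\cup\{m+7k:0\le k\le\lfloor m/7\rfloor\}$, so that $Y+Y+Y$ already covers $\{3m,\dots,4m-1\}$ (via a short additive claim about $X=\{0,1,2,3\}\cup 7\mathbb{N}$), giving $\conductoroper\le 3m$, while $\embeddingdimensionoper=\lfloor m/7\rfloor+5<m/3$ for $m\ge 27$.

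\textbf{The other direction.} For $\mathfrak{D}\setminus\mathfrak{M}$ you reach for $B_3$-sets and truncated constructions, but the constraint $3n\ge m$ is incompatible with the Eliahou--Fromentin setup: the interval $[(3m+1)/2,(5m-1)/3]$ has length about $m/6$, so $|A|=n-1\le m/6+1$, which for large $m$ forces $3n<m$. The paper instead uses the one-line family $\langle m\rangle_{km}=\langle m,km+1,\dots,km+m-1\rangle$ with $k>3$: maximal embedding dimension (so trivially in $\mathfrak{D}$) and conductor $km>3m$ (so not in $\mathfrak{M}$). Your sketch with $\langle\{3e\}\cup A\rangle_{3qe}$ could be made to work, but there is no need for the bookkeeping you anticipate.
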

\begin{proof}
	For a given $m>1$, $S= \langle m\rangle_{mk}=
	\langle m, km+1,\ldots,km+m-1\rangle$ is a semigroup of maximal embedding dimension, thus satisfies $\mathcal{D}$ and, for $k > 3$, $S$ does not satisfy $\mathcal{M}$. As there are infinitely many such semigroups, it follows that $\mathfrak{D}\setminus\mathfrak{M}$ is infinite and therefore $\mathcal{D}\nprec\mathcal{M}$.
	
	It remains to prove that $\mathcal{M}\nprec\mathcal{D}$, which amounts to show that there are infinitely many semigroups in $\mathfrak{M}$ with small embedding dimension.
	
	The proof of this fact begins with a trivial observation. As usual, for sets of integers $A$ and $B$, $A+B$ denotes the set $\{a+b\mid a\in A, b\in B\}$. 
	\begin{claim}\label{claim:X}
		Let $X =\{0,1,2,3\}\cup \{7k\mid k\in \mathbb{N}\}$. Then $\mathbb{N}\subseteq X+X+X$.
	\end{claim}
	\textbf{Proof of the claim.}
		Clearly $\{0,\ldots,6\}\subseteq X+X$.
		By the Euclidean algorithm every integer can be written in the form $7k+\rho$, with $k$ an integer and $\rho\in \{0,\ldots,6\}$. Consequently, any nonnegative integer belongs to $X+X+X$, which proves the claim.
	\medskip

	Let $m$ be a positive integer and let $Y=\{m, m+1,m+2,m+3\}\cup \{7k+m\mid 0\le k\le \lfloor \frac{m}{7}\rfloor\}$. Consider the semigroup $S=\langle Y\rangle$. Example~\ref{example:m28-and-m80} helps to visualize it for two possible values of~$m$.
	
	The set $Y+Y+Y$ contains $m$ consecutive integers. Since $Y+Y+Y=\{3m\}+X+X+X$, the argument used in in the proof of Claim~\ref{claim:X} allows to conclude that $Y+Y+Y$ contains $\big\{3m,\ldots,3m+\lfloor \frac{m}{7}\rfloor+3\big\}$. It follows that $\conductoroper(S)\le 3\multiplicityoper(S)$. Thus $S$ satisfies $\mathcal{M}$.
	
	It is straightforward to check that the embedding dimension of $S$ is $\lfloor\frac{m}{7}\rfloor+5$.
	As $\frac{m}{7}+5<\frac{m}{3}$ if and only if $7m-3m > 3\times 7\times 5$. This implies $m > 26.25$. Thus one concludes that, for $m>27$, $S$ does not satisfy $\mathcal{D}$.
\end{proof}	
\begin{example}\label{example:m28-and-m80}
	Let $Y=\{m, m+1,m+2,m+3\}\cup \{7k+m\mid 0\le k\le \lfloor \frac{m}{7}\rfloor\}$ be the set introduced in the proof of Proposition~\ref{prop:M-D-non-comparable}. Consider the semigroup $S=\langle Y\rangle$. Figures~\ref{fig:M-not-ED-m28} and~\ref{fig:M-not-ED-m80} give a pictorial representation for the cases $m=28$ and $m=80$ (in the latter case only the shape is drawn).

	The following \textsf{GAP} code can be used to give the semigroups.
	\begin{verbatim}
	m := 28;;
	small_gens := [m,m+1,m+2,m+3];
	other_gens := List([1..Int(m/7)], k -> 7*k+m);
	ns := NumericalSemigroup(Union(small_gens,other_gens));;
	\end{verbatim}	
	Then an adaptation of the GAP-code~\ref{code:draw} can be used to get the \textsf{TikZ} code. In order to obtain an image just showing the shape, one can use the options in GAP-code~\ref{code:shape}.
	
	\begin{figure}[h]
	\begin{center}
		\includegraphics[width=\textwidth]{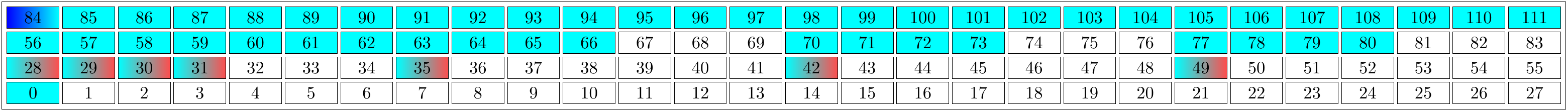}
	\end{center}
	\caption{Pictorial representation of the semigroup obtained with $m=28$. \label{fig:M-not-ED-m28}}
\end{figure} 
	\begin{figure}[h]
		\begin{center}
			\includegraphics[width=\textwidth]{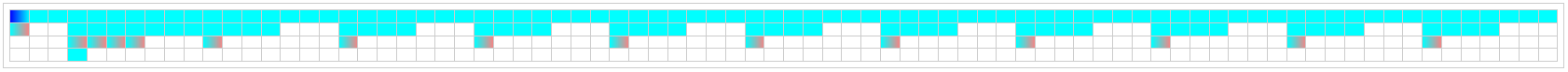}
		\end{center}
		\caption{Shape of the numerical semigroup obtained with $m=80$. \label{fig:M-not-ED-m80}}
	\end{figure} 	
\end{example}

Most of the indicated relations in the lattice represented in Figure~\ref{fig:lattice} have been treated along the text in this section. That  $\mathcal{D}$ and $\mathcal{M}$ are not comparable under quasi-generalization is shown in Proposition~\ref{prop:M-D-non-comparable}. Thus, the following has been proved:

\begin{proposition}\label{prop:not-chain}
	With the notation introduced, one has the lattice in Figure~\ref{fig:lattice}.
\end{proposition}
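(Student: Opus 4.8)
The plan is to unfold the assertion ``one has the lattice in Figure~\ref{fig:lattice}'' into the finitely many comparabilities and incomparabilities among $\mathcal{S},\mathcal{W},\mathcal{E},\mathcal{D}_3,\mathcal{D},\mathcal{M},\mathcal{G}_{60}$ recorded by its Hasse diagram, and to verify each; since quasi generalization is a partial order, checking all $\binom{7}{2}=21$ pairs suffices, and once the relations are in place, confirming that the diagram is a lattice (that joins and meets exist) is purely finite. The starting point is that $\mathfrak{G}_{60}$ is a \emph{finite} class, so $\mathcal{G}_{60}\prec\mathcal{P}$ for every property $\mathcal{P}$; as each of the other six classes is infinite, $\mathcal{G}_{60}$ is the strict minimum. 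For the remaining upward edges I would invoke the inclusions already in the text: $\mathfrak{M}\subseteq\mathfrak{E}$ (Proposition~\ref{prop:large-mult-Eliahou-jems}) and $\mathfrak{E}\subseteq\mathfrak{W}\subseteq\mathfrak{S}$ (Proposition~\ref{prop:eliahou-implies-wilf}), which by transitivity yield $\mathcal{M}\prec\mathcal{E}\prec\mathcal{W}\prec\mathcal{S}$, together with $\mathfrak{D}\subseteq\mathfrak{W}$ and $\mathfrak{D}_3\subseteq\mathfrak{W}$ (Theorems~\ref{th:large-embedding-dim-3} and~\ref{th:type<=3}), which give $\mathcal{D}\prec\mathcal{W}$ and $\mathcal{D}_3\prec\mathcal{W}$.

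It then remains to rule everything else out: that these edges are strict and that $\mathcal{E},\mathcal{D},\mathcal{D}_3$ are pairwise incomparable, with $\mathcal{M}$ strictly below $\mathcal{E}$ and incomparable with $\mathcal{D}$ and with $\mathcal{D}_3$. Strictness of $\mathcal{E}\prec\mathcal{W}$ and of $\mathcal{D}_3,\mathcal{D},\mathcal{M}\prec\mathcal{W}$ is exactly Fact~\ref{fact:infinite-non-eliahou} together with the Fact stated right after it; non-comparability of $\mathcal{M}$ and $\mathcal{D}$ is Proposition~\ref{prop:M-D-non-comparable}, whose families $\langle m\rangle_{mk}\in\mathfrak{D}\setminus\mathfrak{M}$ and $\langle Y\rangle\in\mathfrak{M}\setminus\mathfrak{D}$ I would reuse. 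For the rest it suffices to exhibit a handful of infinite families: the embedding-dimension-three semigroups $\langle m,m+1,m+2\rangle$ with $m$ large satisfy $\multiplicityoper>3\embeddingdimensionoper$ and so lie in $\mathfrak{D}_3\setminus\mathfrak{D}$; the maximal embedding dimension semigroups $\langle m\rangle_{2m}$ with $m\ge 4$ have embedding dimension $m$ and conductor $2m$, hence lie in $\mathfrak{D}\cap\mathfrak{M}$ but not in $\mathfrak{D}_3$, giving $\mathcal{D}\nprec\mathcal{D}_3$ and $\mathcal{M}\nprec\mathcal{D}_3$; and the embedding-dimension-two semigroups $\langle 2,2k+1\rangle$ have $\eliahouoper=0$ and conductor $2k$, so lie in $\mathfrak{E}\cap\mathfrak{D}_3$ but not in $\mathfrak{M}$ for $k$ large, giving $\mathcal{D}_3\nprec\mathcal{M}$ and the strictness of $\mathcal{M}\prec\mathcal{E}$.

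The one relation not already implicit in the text is the incomparability of $\mathcal{E}$ with each of $\mathcal{D}$ and $\mathcal{D}_3$, and this is where I expect the only genuine (if short) computation. That $\mathfrak{E}\setminus\mathfrak{D}$ and $\mathfrak{E}\setminus\mathfrak{D}_3$ are infinite follows again from the $\langle Y\rangle$ family: it sits in $\mathfrak{M}\subseteq\mathfrak{E}$ but has embedding dimension of order $\multiplicityoper/7$, so it eventually fails both $3\embeddingdimensionoper\ge\multiplicityoper$ and $\embeddingdimensionoper<4$. For the reverse I would take $S=\langle m\rangle_{mk}=\langle m,km+1,\ldots,km+m-1\rangle$ with $m\ge 3$ fixed and $k\to\infty$: here $\conductoroper(S)=km$, $\leftsoper(S)=\{0,m,\ldots,(k-1)m\}$, $\qoper(S)=k$, $\primitivesoper(S)\cap\leftsoper(S)=\{m\}$, and $\primitivesoper(S)\cap\{km,\ldots,km+m-1\}=\{km+1,\ldots,km+m-1\}$, so substituting in~(\ref{eq:eliahou-number}) yields $\eliahouoper(S)=k-k(m-1)+km-km=k(2-m)<0$; since $\embeddingdimensionoper(S)=m$ satisfies $3\embeddingdimensionoper(S)\ge\multiplicityoper(S)$, and for $m=3$ also $\embeddingdimensionoper(S)<4$, these $S$ lie in $(\mathfrak{D}\cup\mathfrak{D}_3)\setminus\mathfrak{E}$, whence $\mathcal{D}\nprec\mathcal{E}$ and $\mathcal{D}_3\nprec\mathcal{E}$. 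With all twenty-one comparisons settled one reads off that the poset on $\mathcal{S},\mathcal{W},\mathcal{E},\mathcal{D}_3,\mathcal{D},\mathcal{M},\mathcal{G}_{60}$ is precisely the one drawn in Figure~\ref{fig:lattice}, which one then checks to be a lattice (the sole point left open being whether the edge $\mathcal{W}\prec\mathcal{S}$ is strict --- that is Wilf's conjecture). The true cost of the proof is thus not any single step but the exhaustive, elementary bookkeeping of these comparisons, the one substantive new ingredient being the Eliahou-number computation above.
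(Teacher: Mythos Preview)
Your approach is substantially more thorough than the paper's own: the paper's argument simply notes that the displayed $\prec$-edges have been established earlier in Section~\ref{sec:quasi-generalization} and points to Proposition~\ref{prop:M-D-non-comparable} for the incomparability of $\mathcal{M}$ and $\mathcal{D}$; it does not verify the remaining incomparabilities ($\mathcal{D}_3$ vs.\ $\mathcal{D}$, $\mathcal{D}_3$ vs.\ $\mathcal{M}$, and $\mathcal{D}_3,\mathcal{D}$ vs.\ $\mathcal{E}$) at all. Your extra families handling $\mathcal{D}_3$ vs.\ $\mathcal{D}$ and $\mathcal{D}_3$ vs.\ $\mathcal{M}$ are correct and a genuine addition.

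However, your Eliahou-number computation for $S=\langle m\rangle_{mk}$ is wrong. You substitute $D=\primitivesoper(S)\cap\{km,\ldots,km+m-1\}$, reading the paper's displayed formula for $D$ literally; but that display contains a typo. As the accompanying phrase ``the set of non primitives in the threshold interval'' indicates --- and as is forced by consistency with Proposition~\ref{prop:large-mult-Eliahou-jems} and with Eliahou's original definition --- $D$ is the set of \emph{decomposable} elements in the threshold interval. For $S=\langle m\rangle_{mk}$ the only decomposable element there is $km$, so $\lvert D\rvert=1$ and
\[
\eliahouoper(S)=1\cdot k-k\cdot 1+km-km=0.
\]
(Your reading would also give $\eliahouoper(\langle 3,7,8\rangle)=-2$, contradicting Proposition~\ref{prop:large-mult-Eliahou-jems} since there $\conductoroper=6\le 3\multiplicityoper=9$.) Hence $\langle m\rangle_{mk}\in\mathfrak{E}$ for all $m,k$, and your argument for $\mathcal{D}\nprec\mathcal{E}$ and $\mathcal{D}_3\nprec\mathcal{E}$ collapses. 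These are precisely the two incomparabilities the paper itself leaves unaddressed; establishing them would require genuinely different witnesses --- infinite families with $3\embeddingdimensionoper\ge\multiplicityoper$ (respectively $\embeddingdimensionoper\le 3$) and negative Eliahou number --- which are not supplied by any of the examples in the survey.
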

\thisfloatsetup{floatwidth=.35\hsize,capbesidewidth=sidefil,capposition=beside,capbesideposition=right}
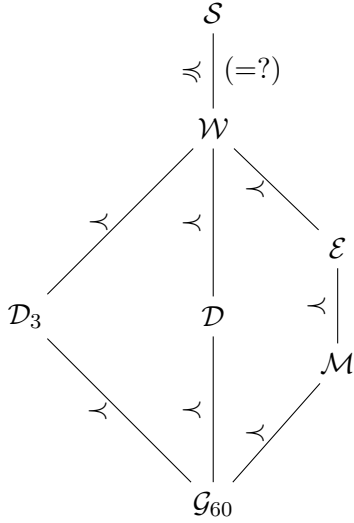
\begin{figure}[h]
	\centering
	\caption{Latice of some numerical semigroup properties (for quasi-generalization)}\label{fig:lattice}
	\begin{tikzpicture}[node distance=2.5cm]
	\node(S)        {$\mathcal{S}$};
	\node(W)  [below = 1cm of S]      {$\mathcal{W}$};
	\node(E)  [below right=1.5cm of W]     {$\mathcal{E}$};
	\node(M)  [below = 1cm of E]     {$\mathcal{M}$};
	\node(D)  [below of = W]        {$\mathcal{D}$};
	\node(D3)  [left of = D]        {$\mathcal{D}_3$};
	\node(G)   [below of = D]         {$\mathcal{G}_{60}$};
	
	\draw(S) -- node[left]{$\preccurlyeq$} node [right]{($=?$)}(W);
	\draw(W) -- node[left]{$\prec$}(D3);
	\draw(W) -- node[left]{$\prec$}(D);
	\draw(W) -- node[left]{$\prec$}(E);
	\draw(D3) -- node[left]{$\prec$}(G);
	\draw(D) -- node[left]{$\prec$}(G);
	\draw(E) -- node[left]{$\prec$}(M);
	\draw(M) -- node[left]{$\prec$}(G);
	\end{tikzpicture}
\end{figure}

Other comparisons could be made. As an example, fix a Wilf semigroup $S$ (possibly with small multiplicity when compared to the conductor). 
It is easy to check that $\multiplicityoper(D(S,a))=\multiplicityoper(S)+a$ and that $\conductoroper(D(S,a))=\conductoroper(S)+a$. Thus, $\frac{\multiplicityoper(D(S,a))}{\conductoroper(D(S,a))}= \frac{\multiplicityoper(S)+a}{\conductoroper(S)+a}$ tends to $1$ when $a$ tends to infinity. In particular, from a certain point on, the quotient $\frac{\multiplicityoper}{\conductoroper}$ is greater than $1/3$ and so, from that point on, all the semigroups satisfy $\mathcal{M}$. Therefore, $\mathcal{M}$ quasi generalizes the property corresponding to Proposition~\ref{prop:dilations}, for any fixed $S$.
\medskip
 
Denote by $\mathcal{P}_4$ the property associated to Proposition~\ref{prop:very-large-mult} with $\rho=4$.
As there are infinitely many semigroups satisfying $\mathcal{D}$ whose multiplicity is even, we get that $\mathcal{D}\nprec\mathcal{P}_4$. On the other hand, it is straightforward to check that there are infinitely many semigroups satisfying $\mathcal{P}_4$ and with small embedding dimension (less than $\multiplicityoper/3$). Thus $\mathcal{D}\nsucc\mathcal{P}_4$, and we conclude that $\mathcal{D}$ and $\mathcal{P}_4$, are not comparable under quasi generalization.
\medskip

Let $\embeddingdimensionoper\ge 10$ be an integer and denote by $\mathfrak{S}_{(\embeddingdimensionoper,\conductoroper)}$ the class of numerical semigroups with embedding dimension $\embeddingdimensionoper$ and conductor $\conductoroper$. Denote by $\ratiooper=\ratiooper(S)$ the ratio (second smallest primitive) of a numerical semigroup $S$. For fixed $\embeddingdimensionoper$ and $\conductoroper$, consider the set 
\[\mathfrak{R}_{(\embeddingdimensionoper,\conductoroper)}=\left\{S\in \mathfrak{S}_{(\embeddingdimensionoper,\conductoroper)}\mid \multiplicityoper \le \frac{8\embeddingdimensionoper^2}{25}+\frac{\embeddingdimensionoper}{5}-\frac{5}{4}\text{ and } \ratiooper >\left\lfloor\frac{\conductoroper+\multiplicityoper}{3}\right\rfloor\right\}.\]

Since no primitive of a numerical semigroup exceeds $\conductoroper+\multiplicityoper-1$, the ratio of a semigroup of embedding dimension $\embeddingdimensionoper$ must be at most  $\conductoroper+\multiplicityoper-\embeddingdimensionoper+1$.

Note that the class of numerical semigroups satisfying Equation~\ref{eq:prop-ratio} in Proposition~\ref{prop:ratio} is: \[\mathfrak{R}_{\embeddingdimensionoper}=\bigcup_{\conductoroper\ge\multiplicityoper}\mathfrak{R}_{(\embeddingdimensionoper,\conductoroper)}.\]

Consider now the class $\mathfrak{R}_{\embeddingdimensionoper}\setminus(\mathfrak{M}\cup \mathfrak{D})$. In set notation it may be written as follows:

\[\left\{S\in \mathfrak{S}\mid 3\embeddingdimensionoper < \multiplicityoper \le \frac{8}{25}\embeddingdimensionoper^2+\frac{1}{5}\embeddingdimensionoper-\frac{5}{4}, \conductoroper > 3\multiplicityoper \text{ and } \left\lfloor\frac{\conductoroper+\multiplicityoper}{3}\right\rfloor<\ratiooper \le \conductoroper+\multiplicityoper-\embeddingdimensionoper+1\right\}.\]
A natural question is whether this class is finite, that is, the disjunction of the properties $\mathcal{M}$ and $\mathcal{D}$ quasi generalizes the property associated to Proposition~\ref{prop:ratio} (with $\embeddingdimensionoper$ fixed).
Apparently there is no bound for the conductor, but a big conductor will force a big ratio. On the other hand, a small embedding dimension and a big ratio leads to a huge conductor. Is the starting ``conductor'' big enough?


`
\subsubsection*{Acknowledgements}\label{sec:acknowledgements}
I would like to thank my colleges ate the FCUP's Mathematics department who made possible for me to benefit of a sabbatical year. The hospitality found in the Instituto de Matemáticas de la Universidad de Granada (IEMath-GR) was amazing, which made the writing of this paper a lot easier. Many thanks to IEMath-GR and especially to Pedro García-Sanchez who made possible this stay in Granada to happen. I want also to thank Claude Marion whose interest shown on the topic highly contributed for my decision on writing this survey. His comments greatly contributed to improve the paper.


\end{document}